\newcommand{\ls}{\leqslant}
\newcommand{\gs}{\geqslant}
\newcommand{\wksto}{\stackrel{\ast}{\rightharpoonup}}
\renewcommand{\div}{\operatorname{div}}
\newcommand{\R}{\mathbb R}
\newcommand{\per}{\operatorname{Per}}
\newcommand{\sign}{\operatorname{sign}}
\newcommand{\supp}{\operatorname{supp}}
\newcommand{\eps}{\varepsilon}
\newcommand{\loc}{\mathrm{loc}}
\newcommand{\BV}{\mathrm{BV}}
\newcommand{\BD}{\mathrm{BD}}
\newcommand{\dd}{\, \mathrm{d}}
\newcommand{\TV}{\operatorname{TV}}
\newcommand{\TD}{\operatorname{TD}}
\newcommand{\TGV}{\operatorname{TGV}}
\newcommand{\dom}{\operatorname{dom}}
\newcommand{\scal}[2]{ \left \langle #1, \, #2 \right \rangle}
\newcommand\restr[2]{{
\left.\kern-\nulldelimiterspace #1 \vphantom{\big|} \right|_{#2} 
}}
\newcommand{\mres}{\mathbin{\vrule height 1.6ex depth 0pt width
0.13ex\vrule height 0.13ex depth 0pt width 1.3ex}}
\DeclareMathOperator*{\argmin}{arg\,min}
\newtheorem{theorem}{Theorem}
\newtheorem{prop}{Proposition}
\newtheorem{cor}{Corollary}
\newtheorem{lemma}{Lemma}
\theoremstyle{definition}
\newtheorem{definition}{Definition}
\newtheorem{remark}{Remark}
\newtheorem{example}{Example}
\begin{document}
\title{Boundedness and unboundedness in total variation regularization\footnotetext{2020 Mathematics Subject Classification: 49Q20, 47A52, 65J20, 65J22.}\footnotetext{Keywords: Total variation, linear inverse problems, boundedness of minimimizers, generalized taut string, vanishing weights, infimal convolution regularizers}}
\author{Kristian Bredies\thanks{\raggedright{Institute of Mathematics and Scientific Computing, University of Graz, Austria (\texttt{kristian.bredies@uni-graz.at})}}, Jos\'e A. Iglesias\thanks{Department of Applied Mathematics, University of Twente, The Netherlands (\texttt{jose.iglesias{@}utwente.nl})} \ and Gwenael Mercier\thanks{Faculty of Mathematics, University of Vienna, Austria (\texttt{gwenael.mercier{@}univie.ac.at})}}
\date{\vspace{-1.7em}}
\maketitle

\begin{abstract}
We consider whether minimizers for total variation regularization of linear inverse problems belong to $L^\infty$ even if the measured data does not. We present a simple proof of boundedness of the minimizer for fixed regularization parameter, and derive the existence of uniform bounds for sufficiently small noise under a source condition and adequate a priori parameter choices. To show that such a result cannot be expected for every fidelity term and dimension we compute an explicit radial unbounded minimizer, which is accomplished by proving the equivalence of weighted one-dimensional denoising with a generalized taut string problem. Finally, we discuss the possibility of extending such results to related higher-order regularization functionals, obtaining a positive answer for the infimal convolution of first and second order total variation.
\end{abstract}
 
\section{Introduction}\label{sec:intro}
For $\Omega \subset \R^d$ either a bounded Lipschitz domain or the whole $\R^d$ with $d \geq 2$, $\Sigma \subset \R^m$ an arbitrary domain with $m \gs 1$, and given a linear bounded operator
\[A: L^{d/(d-1)}(\Omega) \to L^q(\Sigma),\]
we are interested in solutions $u_{\alpha, w}$ of the $\TV$-regularized inverse problem $Au=f$ with noisy data $f+w$ for $w \in L^q(\Sigma)$, that is, solutions of the minimization problem
\begin{equation}\label{eq:primalproblem}\min_{ u \in L^{d/(d-1)}(\Omega)} \frac{1}{\sigma}\left( \int_\Sigma |Au - (f+w)|^q \right)^{\sigma/q}+ \alpha \TV(u),\end{equation}
where $1< q < \infty$, $\sigma = \min(q,2)$ and $\TV(u)$ denotes the total variation of $u$, see \eqref{eq:TVdef} below for its definition. This convex minimization leads, at any minimizer $u_{\alpha, w}$, to the optimality condition
\begin{equation}\label{eq:optimality}\begin{aligned}
v_{\alpha,w} &:= A^\ast p_{\alpha, w} \in \partial_{L^{d/(d-1)}} \TV(u_{\alpha,w}) \subset L^d(\Omega), \\ p_{\alpha, w} &:= \frac{1}{\alpha} \|Au_{\alpha,w}-f-w\|_{L^q(\Sigma)}^{\sigma-2} \, j(f+w-Au_{\alpha,w}),
\end{aligned}\end{equation}
where $j(u)=|u|^{q-2}u$ is the duality mapping of $L^q(\Sigma)$  and $\partial_{L^{d/(d-1)}} \TV( u_\alpha)$ denotes the subgradient (see Definition \ref{def:subgrad} below). Our main goal is to present a proof of the following uniform boundedness result:
\begin{theorem}\label{thm:joint_thm}
Assume that for $A,f$ there is a unique solution $u^\dag$ for $Au=f$ which satisfies the source condition $\operatorname{Ran}(A^\ast) \cap \partial \TV(u^\dag) \neq \emptyset$. There is some constant $C(q,\sigma, d, \Omega)$ such that if $\alpha_n, w_n$ are sequences of regularization parameters and perturbations for which
\[\alpha_n \gs C(q,\sigma, d, \Omega) \|A^\ast\| \Vert w_n \Vert_{L^q(\Sigma)}^{\sigma-1} \xrightarrow[n \to \infty]{} 0,\]
then the corresponding sequence $u_{\alpha_n, w_n}$ of minimizers is bounded in $L^\infty(\Omega)$, and (possibly up to a subsequence)
\[u_{\alpha_n, w_n} \xrightarrow[n \to \infty]{} u^\dag\text{ strongly in }L^{\overline p}(\Omega)\text{ for all }\overline p \in (1, \infty).\]
\end{theorem}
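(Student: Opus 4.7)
My plan is to combine three ingredients: (a) a source-condition Bregman estimate that, under the parameter choice, bounds the residual $r_{\alpha,w} := Au_{\alpha,w}-f-w$ and produces a uniform $L^{q'}$ bound on the dual variable $p_{\alpha,w}$; (b) a truncation/isoperimetric argument using the subgradient relation $v_{\alpha,w} = A^*p_{\alpha,w} \in \partial_{L^{d/(d-1)}}\TV(u_{\alpha,w})\cap L^d(\Omega)$ from \eqref{eq:optimality} to convert the resulting $L^d$ bound on $v_{\alpha_n,w_n}$ into a uniform $L^\infty$ bound on $u_{\alpha_n,w_n}$; and (c) BV compactness plus interpolation to obtain the convergence statement.

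For (a), write $r := r_{\alpha,w}$. Minimality plus $Au^\dag=f$ gives $\tfrac{1}{\sigma}\|r\|_{L^q}^\sigma + \alpha\TV(u_{\alpha,w}) \leq \tfrac{1}{\sigma}\|w\|_{L^q}^\sigma + \alpha\TV(u^\dag)$, and the source condition $v^\dag=A^*p^\dag\in\partial\TV(u^\dag)$ yields $\TV(u_{\alpha,w})-\TV(u^\dag) = \langle p^\dag, r+w\rangle + D$ with Bregman distance $D\geq 0$. Substituting and using Young's inequality to absorb $\alpha\langle p^\dag, r\rangle$ into the fidelity gives
\[\|r\|_{L^q}^\sigma + \alpha D \leq C_\sigma\bigl(\|w\|^\sigma + \alpha^{\sigma'}\|p^\dag\|^{\sigma'} + \alpha\|p^\dag\|\,\|w\|\bigr).\]
Under $\alpha_n \geq C\|A^*\|\,\|w_n\|^{\sigma-1}$ every right-hand term is $O(\|w_n\|^\sigma)$, so $\|r_{\alpha_n,w_n}\|_{L^q} = O(\|w_n\|)$; plugging into the explicit formula for $p_{\alpha,w}$ from \eqref{eq:optimality} yields a uniform bound $\|p_{\alpha_n,w_n}\|_{L^{q'}} = O(1/C)$, which can be made arbitrarily small by enlarging $C$. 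For (b), the standard truncation argument applies the subgradient inequality to perturbations of $u_{\alpha,w}$ by characteristic functions of super-level sets, producing $P(\{u_{\alpha,w}>t\})\leq\int_{\{u_{\alpha,w}>t\}}v_{\alpha,w}$; Hölder's inequality and the isoperimetric inequality in $\R^d$ then force $\|u_{\alpha,w}\|_{L^\infty}\leq\Phi(\|v_{\alpha,w}\|_{L^d})$ once $\|v_{\alpha,w}\|_{L^d}$ falls below the reciprocal of the isoperimetric constant. This is precisely where the prefactor $\|A^*\|$ enters the parameter choice, via $\|v_{\alpha_n,w_n}\|_{L^d}\leq\|A^*\|\,\|p_{\alpha_n,w_n}\|_{L^{q'}}$; together with (a) this gives uniform $L^\infty$ boundedness.

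For (c), the same estimate divided by $\alpha_n$ and the fact that $\sigma'-1=1/(\sigma-1)\geq 1$ give $D_n \to 0$ using $\alpha_n\to 0$ and $\|w_n\|\to 0$, while $\|r_{\alpha_n,w_n}\|\to 0$ yields $Au_{\alpha_n,w_n}\to Au^\dag$ in $L^q$. One-homogeneity of $\TV$ together with $D_n\to 0$ and the convergence $Au_{\alpha_n,w_n}\to Au^\dag$ implies $\limsup \TV(u_{\alpha_n,w_n})\leq\TV(u^\dag)$. The uniform $L^\infty$ bound then provides $\BV$ compactness, any subsequential limit $\bar u$ satisfies $A\bar u = f$ and $\TV(\bar u)\leq\TV(u^\dag)$, hence $\bar u = u^\dag$ by the uniqueness hypothesis, so the whole sequence converges in $L^1_\loc(\Omega)$. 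Interpolation between $L^1$ and $L^\infty$,
\[\|u_{\alpha_n,w_n}-u^\dag\|_{L^{\overline p}} \leq \|u_{\alpha_n,w_n}-u^\dag\|_{L^1}^{1/\overline p}\,\|u_{\alpha_n,w_n}-u^\dag\|_{L^\infty}^{1-1/\overline p},\]
then upgrades this to strong convergence in $L^{\overline p}$ for every $\overline p\in(1,\infty)$.

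The main obstacle I anticipate is the quantitative calibration underlying step (b): the truncation only produces an $L^\infty$ bound once $\|v_{\alpha,w}\|_{L^d}$ is below a dimensional threshold, so the Young's absorption constants in (a) must be tuned so that enlarging the pure constant $C(q,\sigma,d,\Omega)$ uniformly drives $\|v_{\alpha_n,w_n}\|_{L^d}$ below that threshold, without letting the source-condition norm $\|p^\dag\|$ contaminate the admissibility condition on $\alpha_n$ (it is allowed to inflate the eventual $L^\infty$ bound, but not the parameter-choice rule itself). Handling $\Omega=\R^d$ in (c) additionally requires replacing global $\BV$ compactness by a localized version before interpolating.
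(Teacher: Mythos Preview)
Your step (b) contains a genuine gap, and the obstacle you flag at the end is not a matter of calibrating Young constants but a structural failure of the argument.

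First, the claim in (a) that $\|p_{\alpha_n,w_n}\|_{L^{q'}}=O(1/C)$ can be made arbitrarily small by enlarging $C$ is false. Your Bregman estimate gives $\|r\|_{L^q}^\sigma \lesssim \|w\|^\sigma + \alpha\|p^\dag\|\,\|w\| + \alpha^{\sigma'}\|p^\dag\|^{\sigma'}$, and since $\|p_{\alpha,w}\|_{L^{q'}} = \alpha^{-1}\|r\|_{L^q}^{\sigma-1}$, dividing through by $\alpha^{\sigma'}$ leaves the irreducible term $\|p^\dag\|^{\sigma'}$ on the right, independent of the parameter-choice constant $C$. So $\|v_{\alpha,w}\|_{L^d}$ is uniformly bounded, but you cannot force it below any prescribed threshold.

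Second, and more importantly, even if you could drive $\|v_{\alpha,w}\|_{L^d}$ below the isoperimetric constant $\Theta_d$, your step (b) would not produce a quantitative $L^\infty$ bound. Combining $\per(E^s)=\int_{E^s}v_{\alpha,w}$, H\"older, and the isoperimetric inequality gives only $\Theta_d\,|E^s|^{(d-1)/d}\le |E^s|^{(d-1)/d}\|v_{\alpha,w}\|_{L^d}$, which for $\|v_{\alpha,w}\|_{L^d}<\Theta_d$ forces \emph{every} nontrivial level set to vanish, i.e.\ $u_{\alpha,w}\equiv 0$. There is no function $\Phi$ with $\|u\|_{L^\infty}\le\Phi(\|v\|_{L^d})$ in the regime $\|v\|_{L^d}\ge\Theta_d$, which is where nontrivial minimizers live.

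The paper's argument is different in a way that cannot be recovered from a global $L^d$ bound: in \eqref{eq:useisoper} one keeps the \emph{local} quantity $\bigl(\int_{E^s}|v_{\alpha,w}|^d\bigr)^{1/d}$, which is small whenever $|E^s|$ is small by absolute continuity of the integral; this yields a uniform positive lower bound on $|E^s|$ for all $s$, and then the layer-cake formula forces $E^s=\emptyset$ for large $|s|$. To make the bound uniform in $(\alpha,w)$, one needs \emph{equi-integrability} of the family $\{|v_{\alpha,w}|^d\}$, and this does not follow from a uniform $L^d$ bound. The paper obtains it by splitting $v_{\alpha,w}=v_{\alpha,0}+(v_{\alpha,w}-v_{\alpha,0})$: the family $\{v_{\alpha,0}\}$ is equi-integrable because $v_{\alpha,0}\to v_{0,0}$ strongly in $L^d$ via the dual-problem convergence \eqref{eq:noiselessconv}, while $\|v_{\alpha,w}-v_{\alpha,0}\|_{L^d}\le\eta<\Theta_d$ by the stability estimate \eqref{eq:stability} and the parameter choice. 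Your Bregman route provides neither the strong convergence of the noiseless duals nor the stability splitting, so the uniform step cannot close as written.
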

This result is a combination of Proposition \ref{prop:single_u_bounded}, Proposition \ref{prop:bounded_domains} and Corollary \ref{cor:strong_conv} in Section \ref{sec:boundedness}, which in turn depend on the preliminaries reviewed in Section \ref{sec:prelim}.

In the case of denoising $A=\operatorname{Id}$, it is well known that $\TV$ regularization satisfies a maximum principle and $\|u_{\alpha, w}\|_{L^\infty} \ls \|f+w\|_{L^\infty}$; a proof can be found in \cite[Lem.~2.1]{Cha04} for $L^2(\R^2)$ and \cite[Prop.~3.6]{IglMer21} for the $L^{d/(d-1)}(\R^d)$ case that most closely resembles the situation considered here. Another related result is the nonexpansiveness in $L^p$ norm for denoising with rectilinear anisotropy found in \cite{KirSet19}. In comparison, our results work with unbounded measurements and linear operators, and although the analysis depends on the noise level and parameter choice, the bound on $\|u_{\alpha, w}\|_{L^\infty}$ can be made uniform in the regime of small noise and regularization parameter. Our proof of Theorem \ref{thm:joint_thm} hinges on the $L^d$ summability and stability in $L^d$ norm of the subgradients $v_{\alpha,w}$ appearing in \eqref{eq:optimality} and their relation with the perimeter of the level sets of $u_{\alpha, w}$. To the best of our knowledge, the subgradient of $\TV$ was first characterized in \cite{AndBalCasMaz01a, AndBalCasMaz01b} (see also \cite{AndCasMaz04} for further context) relying on the results of \cite{Anz83}. In Section \ref{sec:prelim} we summarize the parts of these results that we will use in the sequel.

Moreover, we are also interested in finding cases outside the assumptions of the result above in which minimizers fail to be bounded. To this end, in Section \ref{sec:string} we consider a one-dimensional denoising problem \eqref{eq:weightedROF}, with weights present in both the fidelity term and the total variation which may vanish at the boundary of the interval. By proving its equivalence to a generalized taut string problem, one obtains that the minimizers can be constructed by gluing a few different types of behavior on finitely many subintervals. This in turn allows us to produce explicit unbounded minimizers for radial data in 3D and boundedness conditions for radial powers. Moreover, we believe that results on weighted taut-string formulations can be of independent interest. 

Indeed, the taut string approach to one-dimensional total variation minimization has been studied in many works, either from a continuous \cite{Gra07} or discrete \cite{Con13, HinEtAl03} point of view. Versions for more general variants have also been considered, like \cite{PoeSch08} for higher-order total variation, \cite{GraObe08} for `nonuniform tubes' which can be seen as a weight imposed directly on the taut string formulation, and \cite{KirSchSet19} where the graph setting is considered. Likewise, there is a large number of works considering weighted TV denoising seen as having spatially-dependent regularization parameters, and approaches to their automatic selection. This literature is extensive and the choice of weights for particular tasks is beyond the scope of this work, so we only explicitly mention the analytical studies \cite{HinPapRau17} for the one-dimensional case and \cite{AthJerNovOrl17} for higher dimensions and weights that degenerate at the boundary. In contrast, we have not been able to find material combining both weighted TV functionals and taut string characterizations, which motivates our investigations below.

In Section \ref{sec:higherorder} we explore the possibility of extending Theorem \ref{thm:joint_thm} or parts thereof to the setting of higher-order regularizers related to the total variation. We obtain a boundedness result for the case of infimal convolution of $\TV$ and $\TV^2$ regularizers, for which the optimality conditions are closely related to subgradients of $\TV$. Finally, we also present counterexamples suggesting that $L^\infty$ bounds in the case of total generalized variation ($\TGV$) likely need methods different to those considered in this work.

\section{Preliminaries}\label{sec:prelim}
In this section we collect definitions and preliminary results.
\begin{definition}\label{def:subgrad}For a convex functional $F:X\to \R \cup \{+\infty\}$, where $X$ is a Banach space, the subgradient or subdifferential of $F$ at some element $u \in X$ is the set defined as
\[\partial_X F(u) = \left\{ v \in X' \,\middle\vert\, F(\bar{u}) - F(u) \gs \langle v, \bar{u} - u\rangle_{(X',X)} \text{ for all } \bar{u} \in X\right\},\]
where $X'$ is the dual Banach space of $X$ and $\langle \cdot, \cdot \rangle_{(X',X)}$ denotes the corresponding duality product.
\end{definition}
\subsection{Subgradient of TV, pairings, and slicing}
The total variation is defined as
\begin{equation}\label{eq:TVdef}\TV(u;\Omega) := \sup\left\{ \int_\Omega u(x) \div z(x) \dd x \, \middle\vert\, z \in C^1_c(\Omega;\R^d) \text{ with } |z(x)| \ls 1 \text{ for all } x \in \Omega\right\},\end{equation}
which in turn motivates defining the perimeter in $\Omega$ of a Lebesgue measurable subset $E \subset \Omega$ as $\per(E):=\TV(1_E)$, where $1_E$ is the indicatrix taking the value $1$ on $E$ and $0$ on $\R^d \setminus E$.

We notice that \eqref{eq:TVdef} is the Fenchel conjugate with respect to the dual pair $(L^d, L^{d/(d-1)})$ of the convex set $K:=\div \{z \in C^1_c(\Omega;\R^d), \|z\|_{L^\infty(\Omega)} \ls 1\}$, so that 
\begin{equation}\label{eq:tvast}(\TV)^\ast=\big((\chi_K)^\ast\big)^\ast = \chi_{\overline K},\end{equation}
where $\chi_K$ is the convex characteristic function with value $0$ on $K$ and $+\infty$ elsewhere, and $\overline K$ is the strong $L^d$ closure of $K$. This closure in turn satisfies (as stated in \cite[Def.~2.2]{ChaCasCreNovPoc10} and proved in \cite[Prop.~7]{BreHol16}) the identity
\[\partial \TV(0)=\overline K = \left\{ \div z \,\middle\vert\, z \in L^\infty(\Omega; \R^d),\, \|z\|_{L^\infty(\Omega)}\ls 1,\, \div z \in L^d(\Omega), \, z \cdot n_{\partial \Omega} = 0\text{ on }\partial \Omega \right\},\]
where the equality $z \cdot n_{\partial \Omega} = 0$ is understood in the sense of the normal trace in $W^{1,d}(\div)$, that is,
\[\int_\Omega \div z \,u = - \int_\Omega z \cdot \nabla u \text{ for all } u \in W^{1, d/(d-1)}(\Omega).\]
Now, since $\TV$ is positively one-homogeneous, we have (see \cite[Lem.~A.1]{IglMerSch18}, for example) that
\begin{equation}\label{eq:TVsubgrad1}v \in \partial \TV (u)\text{ whenever }v \in \partial \TV(0) \text{ and }\int_\Omega vu = \TV(u).\end{equation}
It is natural to ask whether one can use that $v = \div z$ for some $z$ to integrate by parts in the last equality, which would formally lead to $z \cdot Du=|Du|$, or $z = Du/|Du|$. However, since $z$ is only in $L^\infty(\Omega;\R^d)$ and not guaranteed to be continuous, there is no immediate meaning to the action of the measure $Du$ on $z$, and $Du/|Du|$ can only be defined $|Du|$-a.e.~as the polar decomposition of $Du$. This difficulty is mitigated by defining (as first done in \cite{Anz83}) the product between $Du$ and $z$ as a distribution $(z, Du)$ given by
\begin{equation}\label{eq:anzellottidef}\big\langle(z,Du),\varphi\big\rangle = -\int_\Omega (z \cdot \nabla \varphi) u - \int_\Omega u \varphi \div z  \text{ for }\varphi \in C^\infty_c(\Omega).\end{equation}
This definition makes sense whenever $u \in \BV(\Omega) \cap L^{d/(d-1)}(\Omega)$ and 
\[z \in X_d(\Omega)= \left\{ z \in L^\infty(\Omega; \R^d)\,\middle\vert\, \div z \in L^d(\Omega)\right\} \supset \partial \TV(0).\]
Moreover if $\supp \varphi \subset A$ for some open set $A$ also \cite[Thm.~1.5]{Anz83}
\[\big|\big\langle(z,Du),\varphi\big\rangle\big| \ls \|z\|_{L^\infty(\Omega)} \|\varphi\|_{L^\infty(\Omega)}|Du|(A),\]
so it can be extended to a measure, which in fact is absolutely continuous with respect to $|Du|$, and whose corresponding Radon-Nikod\'ym derivative we denote by $\theta(z, Du, \cdot) \in L^1(\Omega, |Du|)$. Moreover, there exists a generalized normal trace of $z$ on $\partial\Omega$ denoted by $[z, n_{\partial \Omega}] \in L^\infty(\partial \Omega)$ for which the following Green's formula \cite[Thm.~1.9]{Anz83} holds:
\begin{equation}\label{eq:greenthm}\int_{\partial \Omega} [z, n_{\partial \Omega}] \tilde u \dd\mathcal{H}^{d-1} = (z, D\tilde u)(\Omega) + \int_\Omega \div z\, \tilde u\text{ for all }\tilde u \in \BV(\Omega) \cap L^{d/(d-1)}(\Omega).\end{equation}
With these definitions in mind \eqref{eq:TVsubgrad1} may be written \cite[Prop.~1.10]{AndCasMaz04} as $v \in \partial \TV (u)$ if and only if
\begin{equation}\label{eq:TVsubgrad2}\begin{gathered}v = -\div z \text{ for some } z \in X_d(\Omega) \text{ satisfying }\\ \|z\|_{L^\infty(\Omega)}\ls 1,\, [z, n_{\partial \Omega}] = 0,\text{ and }(z, Du)(\Omega) = \TV(u).\end{gathered}\end{equation}

Since we will make extensive use of level sets, we now fix the notation we use for them:
\begin{definition}\label{def:levelsets}
Given a function $u \in L^{d/(d-1)}(\Omega)$ we denote by $E^s$ the upper level set $\{u >s\}$ if $s>0$, and the lower level set $\{u<s\}$ for $s<0$, so that $|E^s|<+\infty$ for all $s \in \R \setminus \{0\}$. These two cases can be summarized as
\[E^s := \big\{ x \,|\, \sign(s)\,u(x) > |s|\big\},\]
\end{definition}
Finally, the following characterization of the subgradient of $\TV$ in terms of perimeter of level sets is crucial for our results below:
\begin{lemma}\label{lem:subgslicing}
Let $u$ and $E^s$ be as in Definition \ref{def:levelsets}. The following assertions are equivalent:
 \begin{itemize}
  \item\label{i:1} $v \in \partial \TV(u) \subset L^d(\Omega)$.
  \item\label{i:2} $v \in \partial \TV(0)$ and $\int_{\Omega} vu = \TV(u)$.
  \item\label{i:3} $v \in \partial \TV(0)$ and for a.e.~$s$,
  \begin{equation}\label{eq:perequality}\per(E^s) =\operatorname{sign}(s) \int_{E^s} v.\end{equation}
  \item\label{i:4} For a.e.~$s$, the level sets $E^s$ satisfy
  \[E^s \in \argmin_{E \subset \Omega} \per(E) - \operatorname{sign}(s) \int_E v.\]
 \end{itemize} 
\end{lemma}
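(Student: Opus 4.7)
My plan is to establish the chain (1)$\Leftrightarrow$(2)$\Leftrightarrow$(3)$\Leftrightarrow$(4) via the coarea formula and a layer-cake decomposition, combined with the convex-duality fact that $u$ minimizes $\bar u \mapsto \TV(\bar u) - \int_\Omega v \bar u$ if and only if $v \in \partial \TV(u)$.

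First, (1)$\Leftrightarrow$(2) is essentially the content of \eqref{eq:TVsubgrad1} already given in the excerpt: by positive $1$-homogeneity of $\TV$, evaluating the subgradient inequality at $\bar u = 0$ and $\bar u = 2u$ shows that $v \in \partial \TV(u)$ iff $v \in \partial \TV(0)$ and $\int_\Omega v u = \TV(u)$.

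Next, for (2)$\Leftrightarrow$(3), the backbone is the slicing identity
\[\TV(u) - \int_\Omega v u \;=\; \int_\R \left[\per(E^s) - \sign(s) \int_{E^s} v\right] ds,\]
obtained by combining the coarea formula $\TV(u) = \int_\R \per(E^s)\,ds$ with the layer-cake representation $u(x) = \int_\R \sign(s)\, 1_{E^s}(x)\,ds$ (verified by splitting $u$ into positive and negative parts) and applying Fubini, justified since $vu \in L^1(\Omega)$. Because $v \in \partial \TV(0)$, testing the subgradient inequality against $\pm 1_E$ for any set of finite perimeter yields $\bigl|\int_E v\bigr| \ls \per(E)$, so the integrand on the right-hand side is a.e.~nonnegative; thus (2), stating that the integral vanishes, is equivalent to (3), stating that the integrand vanishes a.e.

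Finally, for (3)$\Leftrightarrow$(4): the direction (3)$\Rightarrow$(4) is immediate, since with $v \in \partial \TV(0)$ the sliced functional $E \mapsto \per(E) - \sign(s)\int_E v$ is nonnegative on sets of finite perimeter, vanishes at $E = \emptyset$, and by (3) also at $E = E^s$. The converse (4)$\Rightarrow$(3) I would handle by a Chan--Esedoglu-type slicing argument: for any competitor $\bar u$, the same slicing identity writes $\TV(\bar u) - \int_\Omega v \bar u$ as an integral of slicewise values, each bounded below by the slicewise minimum already attained at $E^s$; hence $\TV(\bar u) - \int_\Omega v \bar u \gs \TV(u) - \int_\Omega v u$, so $u$ minimizes the convex functional, equivalent to $v \in \partial \TV(u)$, i.e., (1), from which (2) and (3) follow. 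The main obstacle I expect is this last implication: one must apply Fubini and coarea uniformly on admissible competitors $\bar u$ so that (4)'s slicewise minimality transfers cleanly to minimality of $u$ itself; everything else reduces to standard manipulations with the tools reviewed in Section~\ref{sec:prelim}.
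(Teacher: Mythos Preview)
Your argument is correct and is precisely the standard route; the paper itself does not give an argument but simply cites \cite[Lem.~A.1]{IglMerSch18} for (1)$\Leftrightarrow$(2) and \cite[Prop.~3]{ChaDuvPeyPoo17} for the remaining equivalences, and what you have sketched is essentially the content of the latter reference (coarea plus layer-cake plus nonnegativity of the sliced integrand when $v\in\partial\TV(0)$). Your worry about the final implication is slightly overstated: no uniformity in $\bar u$ is needed, since for each fixed competitor $\bar u$ with $\TV(\bar u)<\infty$ the slicing identity holds individually (Fubini is justified by $v\bar u\in L^1$ via H\"older), and for $\TV(\bar u)=+\infty$ the inequality is vacuous; so the passage from slicewise minimality in (4) to global minimality of $u$, hence (1), goes through without further obstacles.
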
 
\begin{proof}The equivalence between the first two items follows from $\TV$ being positively one-homogeneous, and a proof can be found in \cite[Lem.~A.1]{IglMerSch18}, for example. A proof for the other statements in the $L^2(\R^2)$ setting, but which generalizes without modification to the present case, can be found in \cite[Prop.\ 3]{ChaDuvPeyPoo17}.
\end{proof}

\subsection{Dual solutions and their stability in one-homogeneous regularization}\label{sec:dualstuff}
In \cite[Prop.~3.3]{IglMer20}, the dual problem to \eqref{eq:primalproblem} is computed to be
\begin{equation}\label{eq:dualalphaw}\sup_{\substack{p \in L^{q\prime{}}(\Sigma), \\ A^\ast p \in \partial \TV(0) \subset L^d(\Omega)}} \scal{p}{f+w}_{(L^{q\prime}, L^q)}-\frac{\alpha^{\frac{1}{\sigma-1}}}{\sigma'}\|p\|_{L^{q\prime}(\Sigma)}^{\sigma'},\tag{$D_{\alpha,w}$}\end{equation}
where $q'=q/(q-1)$ is the conjugate exponent of $q$, and analogously for $\sigma'$, and we notice that owing to the strict concavity of the objective, this problem has a unique maximizer. Also, in \cite[Prop.~3.6]{IglMer20} it is proved that the solutions of \eqref{eq:dualalphaw} satisfy
 \begin{equation}\label{eq:parameterest}
\|p_{\alpha, w}-p_{\alpha, 0}\|_{L^{q\prime }(\Sigma)} \ls \rho_{L^q,\sigma}\left(\frac{\|w\|_{L^q(\Sigma)}}{2\alpha^{\frac{1}{\sigma-1}}}\right),
\end{equation}
where $\rho_{L^q,\sigma}$ is defined as the inverse of the function $\R^+ \to \R^+$ defined by
\begin{equation}t \mapsto \frac{\delta_{\|\cdot\|^{\sigma'}_{L^{q\prime}}\big/\sigma'}(t)}{t}\end{equation}
where $\delta_{\|\cdot\|^{\sigma'}_{L^{q\prime}}\big/\sigma'}$ is in turn the largest modulus of uniform convexity of the functional $\|\cdot\|^{\sigma'}_{L^{q\prime}(\Sigma)}\big/\sigma'$, that is, the largest function $\psi$ satisfying for all $u,v \in L^{q\prime}(\Sigma)$ and $\lambda \in (0,1)$:
\[\frac{1}{\sigma'}\|\lambda u+ (1-\lambda)v\|^{\sigma'}_{L^{q\prime}(\Sigma)} \ls  \frac{\lambda}{\sigma'}\|u\|^{\sigma'}_{L^{q\prime}(\Sigma)} + \frac{1-\lambda}{\sigma'}\|v\|^{\sigma'}_{L^{q\prime}(\Sigma)} - \lambda(1-\lambda)\psi(\|u-v\|_{L^{q\prime}(\Sigma)}).\]
Now, since we have chosen $\sigma = \min(q,2)$, we have \cite[Thm.~5.4.6, Ex.~5.4.7]{BorVan10} and \cite[Example 2.47]{SchKalHofKaz12}
$$\delta_{L^{q\prime}}(\epsilon) \gs C \epsilon^{\max(2, q/(q-1))} \implies \delta_{\|\cdot\|^{\sigma'}_{L^{q\prime}}\big/\sigma'}(t) \gs C t^{\sigma'}$$
where $\delta_{L^{q\prime}}$ denotes the modulus of uniform convexity of $L^{q\prime}(\Sigma)$ (which is independent of $\Sigma$) defined using points in the unit ball, that is
\[\delta_{L^{q\prime}}(\epsilon) = \inf \left\{ 1 - \left\|\frac{p_1+p_2}{2}\right\|_{L^{q\prime}(\Sigma)} \,\middle\vert\, \|p_1\|_{L^{q\prime}(\Sigma)}=\|p_2\|_{L^{q\prime}(\Sigma)}=1 \text{ and }\|p_1 - p_2\|_{L^{q\prime}(\Sigma)} \gs \epsilon \right\}.\]
In this setting and defining $v_{\alpha, w}:= A^\ast p_{\alpha, w} \in \partial \TV(u_{\alpha, w})$ as in \eqref{eq:optimality}, we can deduce from \eqref{eq:parameterest} that
\begin{equation}\label{eq:stability}
\|v_{\alpha, w}-v_{\alpha, 0}\|_{L^d(\Omega)} \ls C \|A^\ast\| \left(\frac{\|w\|_{L^q(\Sigma)}}{2\alpha^{\frac{1}{\sigma-1}}}\right)^{\frac{1}{\sigma'-1}} = C_{q,\sigma} \|A^\ast\| \left(\frac{\|w\|_{L^q(\Sigma)}^{\frac{1}{\sigma'-1}}}{\alpha^{\frac{1}{\sigma-1}\frac{1}{\sigma'-1}}}\right)= C_{q,\sigma} \|A^\ast\| \frac{\|w\|_{L^q(\Sigma)}^{\sigma-1}}{\alpha},
\end{equation}
which tells us that the threshold to control the effect of the noise on the dual variables is a direct generalization of the linear parameter choice that would arise in the case $\sigma=q=2$.

To know more about the asymptotic behavior of $v_{\alpha,w}$ as both $\alpha$ and $w$ vanish one needs an additional assumption. Whenever $u^\dag$ denotes a solution of $Au = f$ of minimal $\TV$ among such solutions, we say that the source condition is satisfied if there exists $p \in L^{q\prime}(\Omega)$ such that
\begin{equation}\label{eq:sourcecond}A^\ast p \in \partial \TV(u^\dag).\end{equation}
Now, since $\TV$ is not strictly convex, $u^\dag$ may not be unique, but arguing as in \cite[Rem.~3]{IglMerSch18} we have that if $\hat{u}^\dag$ is another such solution and we have \eqref{eq:sourcecond}, then $A^\ast p \in \partial \TV(\hat{u}^\dag)$ as well, for the same source element $p$. For our purposes, the significance of this source condition comes from the fact that it guarantees that the formal dual problem $(D_{0,0})$ obtained by setting $\alpha=0$ and $w=0$ in \eqref{eq:dualalphaw} has at least one maximizer, namely the source element $p$.

Under this source condition assumption and taking a sequence $\alpha_n \to 0$ we have in the case where noise is not present ($w=0$) the convergence 
\begin{equation}\label{eq:noiselessconv}v_{\alpha_n, 0} = A^\ast p_{\alpha, 0} \to A^\ast p_{0,0} =: v_{0,0} \text{ strongly in }L^d(\Omega),\end{equation}
where $p_{0,0}$ is the element of $L^{q\prime}(\Omega)$ of minimal norm among those satisfying \eqref{eq:sourcecond}, which is unique. This convergence is proved (see \cite[Prop.~3.4]{IglMer20} for a detailed argument) by testing the optimality of $p_{\alpha,0}$ and $p_{0,0}$ in $(D_{\alpha, 0})$ and $(D_{0, 0})$ with respect to each other to obtain both $p_{\alpha, 0} \rightharpoonup p_{0,0}$ weakly in $L^{q'}(\Sigma)$ and $\|p_{\alpha, 0}\|_{L^{q'}(\Sigma)} \ls \|p_{0,0}\|_{L^{q'}(\Sigma)}$, which together imply strong convergence by using the Radon-Riesz property of $L^{q'}(\Sigma)$. Finally, combining this convergence with \eqref{eq:stability}, we also have that if $w_n \in L^q(\Sigma)$ are such that
\begin{equation}\label{eq:noisyconv}\frac{\|w_n\|_{L^q(\Sigma)}^{\sigma-1}}{\alpha_n} \to 0,\text{ then also } v_{\alpha_n, w_n} \to A^\ast p_{0,0}\text{ strongly in }L^d(\Omega).\end{equation}

\section{Boundedness, uniform boundedness, and strong convergence}\label{sec:boundedness}
We start with a direct proof of boundedness of minimizers $u_{\alpha,w}$ of \eqref{eq:primalproblem} which, using the results cited in the previous section, can be made uniform for small noise and strong enough regularization. It relies on studying the level sets 
\[E^s_{\alpha, w} := \big\{ x \,|\, \sign(s)\,u_{\alpha,w}(x) > |s|\big\}.\]

\begin{prop}\label{prop:single_u_bounded}
Let $\Omega=\R^d$. Then minimizers $u_{\alpha,w}$ of \eqref{eq:primalproblem} belong to $L^\infty(\R^d)$. Moreover, under the source condition $A^\ast p_0 \in \partial \TV(u^\dag)$ and a parameter choice satisfying the condition 
\begin{equation} \label{eq:parameterchoice}\Vert w \Vert_{L^q(\Sigma)}^{\sigma-1} \ls \frac{\eta \alpha}{C_{q,\sigma} \|A^\ast\|}\end{equation}
where $C_{q,\sigma}$ is the constant in \eqref{eq:stability} and $\eta < \Theta_d$ is the constant of the isoperimetric inequality $\per(E) \gs \Theta_d |E|^{(d-1)/d}$ for $E$ of finite perimeter, we have a bound for $\|u_{\alpha,w}\|_{L^\infty(\R^d)}$ which is uniform in $\alpha$ and $w$. 
\end{prop}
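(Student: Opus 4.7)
My proposed strategy is to combine the perimeter-equality characterization from Lemma \ref{lem:subgslicing} with Hölder's inequality restricted to each level set and the isoperimetric inequality. For a fixed minimizer $u_{\alpha,w}$ of \eqref{eq:primalproblem}, the optimality condition \eqref{eq:optimality} gives $v_{\alpha,w} \in \partial \TV(u_{\alpha,w}) \subset L^d(\R^d)$. Then for a.e.\ $s \ne 0$, Lemma \ref{lem:subgslicing} yields
\[\per(E^s_{\alpha,w}) = \sign(s)\int_{E^s_{\alpha,w}} v_{\alpha,w} \ls \|v_{\alpha,w}\,1_{E^s_{\alpha,w}}\|_{L^d(\R^d)} \big|E^s_{\alpha,w}\big|^{(d-1)/d},\]
while the isoperimetric inequality gives $\Theta_d |E^s_{\alpha,w}|^{(d-1)/d} \ls \per(E^s_{\alpha,w})$. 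Dividing, whenever $|E^s_{\alpha,w}|>0$ we must have $\|v_{\alpha,w}\,1_{E^s_{\alpha,w}}\|_{L^d(\R^d)} \gs \Theta_d$. Since $u_{\alpha,w} \in L^{d/(d-1)}(\R^d)$, Markov's inequality shows that $|E^s_{\alpha,w}| \to 0$ as $|s| \to \infty$, and then absolute continuity of the Lebesgue integral of $|v_{\alpha,w}|^d$ forces $\|v_{\alpha,w}\,1_{E^s_{\alpha,w}}\|_{L^d(\R^d)} \to 0$, a contradiction unless $|E^s_{\alpha,w}|=0$ for $|s|$ sufficiently large. This proves the first statement that $u_{\alpha, w} \in L^\infty(\R^d)$.

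For the uniform bound, I would refine the same scheme using the stability estimate \eqref{eq:stability} together with the parameter choice \eqref{eq:parameterchoice}. Writing $v_{\alpha,w} = v_{\alpha,0} + (v_{\alpha,w}-v_{\alpha,0})$, the triangle inequality in $L^d$ on $E^s_{\alpha,w}$ gives
\[\per(E^s_{\alpha,w}) \ls \bigl(\|v_{\alpha,0}\,1_{E^s_{\alpha,w}}\|_{L^d(\R^d)} + \eta\bigr)\big|E^s_{\alpha,w}\big|^{(d-1)/d},\]
so combining with the isoperimetric inequality, whenever $|E^s_{\alpha,w}|>0$ we obtain $\|v_{\alpha,0}\,1_{E^s_{\alpha,w}}\|_{L^d(\R^d)} \gs \Theta_d - \eta > 0$. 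To make this fail uniformly along a sequence $(\alpha_n, w_n)$ satisfying the parameter choice, two ingredients are needed. First, comparing $u_{\alpha_n, w_n}$ with the competitor $u^\dag$ in \eqref{eq:primalproblem} and using the parameter choice yields $\TV(u_{\alpha_n,w_n}) \ls \TV(u^\dag) + o(1)$; since $\Omega = \R^d$, the Sobolev embedding $\|u\|_{L^{d/(d-1)}} \ls C \TV(u)$ then makes $\|u_{\alpha_n,w_n}\|_{L^{d/(d-1)}(\R^d)}$ uniformly bounded, and Markov's inequality gives $|E^s_{\alpha_n,w_n}| \ls C/|s|^{d/(d-1)}$ uniformly in $n$. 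Second, by the noiseless convergence \eqref{eq:noiselessconv}, the family $\{|v_{\alpha_n,0}|^d\}$ is equi-integrable in $L^1(\R^d)$.

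Combining these two facts, for any prescribed $\varepsilon < \Theta_d - \eta$ one can find $\delta>0$ such that $\|v_{\alpha_n,0}\,1_E\|_{L^d(\R^d)} \ls \varepsilon$ whenever $|E| \ls \delta$, and then pick $s_0$ so that $|E^s_{\alpha_n,w_n}| \ls \delta$ for all $|s|>s_0$ and all $n$. This contradicts the lower bound $\Theta_d - \eta$ on $\|v_{\alpha_n,0}\,1_{E^s_{\alpha_n,w_n}}\|_{L^d(\R^d)}$, and hence forces $|E^s_{\alpha_n,w_n}|=0$ for all $|s|>s_0$, uniformly in $n$. The main obstacle in writing this carefully is producing a strict gap between $\Theta_d$ and the perturbation contribution from the noise: the gap is supplied by the hypothesis $\eta < \Theta_d$, and the equi-integrability argument ensures that the remaining $v_{\alpha_n,0}$ part can be absorbed into a fraction of that gap uniformly in $n$.
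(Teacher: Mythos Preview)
Your proof is correct and follows essentially the same route as the paper: the combination of the perimeter equality from Lemma~\ref{lem:subgslicing}, H\"older's inequality, the isoperimetric inequality, and (for the uniform part) the splitting $v_{\alpha,w} = v_{\alpha,0} + (v_{\alpha,w}-v_{\alpha,0})$ together with equi-integrability of the noiseless subgradients $\{v_{\alpha,0}\}$ is exactly the paper's argument. The only notable difference is that you make explicit the uniform $L^{d/(d-1)}$ bound on $u_{\alpha_n,w_n}$ (via comparison with $u^\dag$ and the Sobolev inequality on $\R^d$) needed to ensure $|E^s_{\alpha_n,w_n}|$ drops below the equi-integrability threshold $\delta$ uniformly in $n$, a step the paper leaves implicit after establishing the uniform $\delta$.
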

\begin{proof}
We first show the claim for fixed $\alpha,w$. To do this, we make use of \eqref{eq:dualalphaw}: the problem has a solution $p_{\alpha,w}$, we have strong duality and the optimality condition
\begin{equation} 
v_{\alpha,w} := A^\ast p_{\alpha,w}  \in \partial \TV(u_{\alpha,w}). \label{eq:optcond}
\end{equation}
In what follows, we assume  without loss of generality that $s>0$ so that $E_{\alpha, w}^s := \{u_{\alpha,w} >s\}$. From \eqref{eq:perequality} and the H\"older inequality we can derive for a.e.~$s>0$ the estimate 
\[\per(E_{\alpha, w}^s) = \int_{E_{\alpha, w}^s} v_{\alpha,w} \ls |E_{\alpha, w}^s|^{(d-1)/d} \left( \int_{E_{\alpha, w}^s} |v_{\alpha,w}|^d \right)^{1/d}.\]
Using the isoperimetric inequality $\per(E_{\alpha, w}^s) \gs \Theta_d |E_{\alpha, w}^s|^{(d-1)/d}$, we obtain
\begin{equation}\label{eq:useisoper} \per(E_{\alpha, w}^s) \ls \frac{1}{\Theta_d}\per(E_{\alpha, w}^s) \left( \int_{E_{\alpha, w}^s} |v_{\alpha,w}|^d \right)^{1/d}.\end{equation}
Now, $v_{\alpha,w} \in L^d(\R^d)$ so for any $\eps>0$, there exists a $\delta>0$ such that for sets $E$ with $|E| \ls \delta$, $\int_E |v_{\alpha,w}|^d \ls \eps^d.$ In particular, if $|E_{\alpha, w}^s| \ls \delta$, it implies
\[ \per(E_{\alpha, w}^s) \ls \frac{\eps}{\Theta_d} \per(E_{\alpha, w}^s),\]
which is not possible for $\eps$ too small if $|E_{\alpha, w}^s| >0$.
Note that the bound on $\delta$ does not depend on $s$, only on $v_{\alpha,w}$, which means that we have a uniform positive lower bound on the mass of \emph{every} level set $E_{\alpha, w}^s$.
From the layer-cake formula for $L^{d/(d-1)}(\R^d)$ functions \cite[Thm.~1.13]{LieLos01}
\[\int_{\R^d} (u_{\alpha,w}^+)^{d/(d-1)} = \int_0^\infty \frac{d}{d-1} s^{1/(d-1)} |E_{\alpha, w}^s| \dd s, \text{ where } u_{\alpha,w}^+ = \max(u_{\alpha,w}, 0) \] we conclude that there must exist $s_0>0$ such that for $s \gs s_0$, $|E_{\alpha, w}^s| = 0$, which means that $u_{\alpha,w} \ls s_0$ a.e. in $\R^d$. We prove similarly that $u_{\alpha,w}^- = \max(0, - u_{\alpha,w})$ is bounded.

Finally, let us assume that $(\alpha_n,w_n)$ is a sequence of regularization parameters and noises for which \eqref{eq:parameterchoice} holds. From $v_{\alpha_n,0} \to v_{0,0}$ for any sequence $\alpha_n \to 0$ (see \eqref{eq:noiselessconv}), one infers that the family $\{v_{\alpha,0}\}_{\alpha > 0}$ is equi-integrable in $L^d(\R^d)$. We want, as before, to estimate $\int_{E_{\alpha, w}^s} |v_{\alpha,w}|^d.$ This can be done by writing
\begin{align*}\left(\int_{E_{\alpha, w}^s} |v_{\alpha,w}|^d \right)^{1/d}&\ls \left(\int_{E_{\alpha, w}^s} |v_{\alpha,0}|^d \right)^{1/d} + \left(\int_{E_{\alpha, w}^s} |v_{\alpha,w}-v_{\alpha,0}|^d \right)^{1/d} \\
&\ls \left(\int_{E_{\alpha, w}^s} |v_{\alpha,0}|^d \right)^{1/d} + \Vert v_{\alpha,w} - v_{\alpha,0} \Vert_{L^d(\R^d)}. \end{align*}
Now, using the equi-integrability of ${v_{\alpha,0}}$, one can, for any $\eps$, find $\delta$ so that as soon as $|E_{\alpha, w}^s|\ls \delta$, the first term of the right hand side is bounded by $\eps.$ On the other hand, the second term is, independently from $\delta$, bounded by $\eta < \Theta_d$, as a consequence of \eqref{eq:stability} and \eqref{eq:parameterchoice}. We conclude that as soon as $|E_{\alpha, w}^s| \ls \delta$, we have 
\[ \per(E_{\alpha, w}^s) \ls \frac{\eps + \eta}{\Theta_d} \per(E_{\alpha, w}^s)\]
which is still not possible for $\eps$ too small (independent of $s$ and $\alpha,w$ satisfying \eqref{eq:parameterchoice}) if $|E^s_{\alpha, w}|>0$.
\end{proof}
\begin{remark}By a similar argument on the $E_{\alpha, w}^s$ (see \cite[Lem.~5]{IglMerSch18}), we can actually show that $u_{\alpha,w}$ has compact support, so that $u_{\alpha,w}^+ \in L^1(\R^d)$ and we could use the (simpler) layer cake formula in $L^1$:
\[ \int_{\R^d} u_{\alpha,w}^+ = \int_0^\infty |E_{\alpha, w}^s| \dd s\]
which would provide the same contradiction.
\end{remark}

\begin{remark}\label{rem:intermediatecase}
The parameter choice condition \eqref{eq:parameterchoice} does not necessarily imply convergence of the dual variables. This is the case when
\[\frac{\Vert w_n \Vert_{L^q(\Sigma)}^{\sigma-1}}{\alpha_n} \to 0 \text{ as }n\to \infty\]
so that, as remarked in Section \ref{sec:dualstuff}, in fact \eqref{eq:noiselessconv} and \eqref{eq:stability} imply that 
\[ v_{\alpha_n,w_n} \to A^\ast p_0 \text{ strongly in }L^d(\Omega).\]
In particular, this implies that the family $(v_{\alpha_n,w_n})_n$ is equi-integrable in $L^d(\R^d)$, which means that in the reasoning coming after \eqref{eq:useisoper}, the $\delta$ can be chosen independent of $n$, which would simplify the proof in this more restrictive case.
\end{remark}

\begin{remark}\label{rem:denoisingcase}
In the case of denoising in the plane with the Rudin-Osher-Fatemi model \cite{RudOshFat92}, for which $d=m=2$, $q=2$ and $A=\mathrm{Id}:L^2(\R^2)\to L^2(\R^2)$, we have that $v_{\alpha, w} = \alpha^{-1}(f+w-u_{\alpha, w})$ and Proposition \ref{prop:single_u_bounded} applies as soon as $f+w \in L^2(\R^2)$.
\end{remark}

\begin{remark}\label{rem:nonemptysubg}
The same proof shows that if the source condition \eqref{eq:sourcecond} is satisfied, then $u^\dag \in L^\infty(\R^d)$.
\end{remark}

We consider now the case when $\Omega$ is a bounded Lipschitz domain, which leads to two cases for the functional in \eqref{eq:primalproblem}. First, we can consider the total variation $\TV(u;\R^d)$ of the extension by zero of $u$ from $\Omega$ to the whole $\R^d$, which we refer to as homogeneous Dirichlet boundary conditions. Second, we can consider the variation $\TV(u;\Omega)$ which we refer to as homogeneous Neumann boundary conditions, since the test functions in \eqref{eq:TVdef} are compactly supported in $\Omega$. 

\begin{prop}\label{prop:bounded_domains}
Proposition \ref{prop:single_u_bounded} also holds for bounded Lipschitz domains $\Omega$ and either homogeneous Dirichlet or Neumann boundary conditions on $u_{\alpha, w}$. 
\end{prop}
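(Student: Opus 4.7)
The plan is to adapt the proof of Proposition \ref{prop:single_u_bounded} with modifications that accommodate the boundary of $\Omega$. The two boundary conditions are treated separately because they interact differently with the subgradient characterization \eqref{eq:TVsubgrad2} and with Lemma \ref{lem:subgslicing}: in the Dirichlet case the relevant perimeter is that in $\R^d$ (after zero-extension), while in the Neumann case it is the perimeter relative to $\Omega$. The isoperimetric tool available in each situation is different, but the contradiction-via-layer-cake mechanism is the same.

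For the homogeneous Dirichlet case, I would simply observe that extending $u_{\alpha,w}$ by zero to $\R^d$ produces a $\BV(\R^d)$ function whose total variation equals the one used in \eqref{eq:primalproblem}, and whose level sets $E^s_{\alpha,w}$ are sets of finite perimeter in $\R^d$ (now including a possible contribution from $\partial\Omega$). The subgradient identity \eqref{eq:TVsubgrad2} and the slicing statement in Lemma \ref{lem:subgslicing}, read with $\per(\cdot;\R^d)$, still furnish $\per(E^s_{\alpha,w};\R^d)=\sign(s)\int_{E^s_{\alpha,w}}v_{\alpha,w}$ a.e. Since the full isoperimetric inequality in $\R^d$ applies, the chain of inequalities following \eqref{eq:useisoper} is verbatim, and the conclusion (first fixed $\alpha,w$, then uniform via equi-integrability of $\{v_{\alpha,0}\}$ and \eqref{eq:stability}) is unchanged.

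For the homogeneous Neumann case I would work directly on $\Omega$ and replace the global isoperimetric inequality by the relative one, which for any bounded Lipschitz $\Omega$ takes the form $\min(|E|,|\Omega\setminus E|)^{(d-1)/d}\ls C_\Omega\per(E;\Omega)$. The Lemma \ref{lem:subgslicing} identity then provides, for a.e.~$s$ with $|E^s_{\alpha,w}|\ls |\Omega|/2$, exactly the inequality
\[\per(E^s_{\alpha,w};\Omega)\ls C_\Omega\per(E^s_{\alpha,w};\Omega)\left(\int_{E^s_{\alpha,w}}|v_{\alpha,w}|^d\right)^{1/d},\]
and the H\"older/equi-integrability contradiction for small enough $|E^s_{\alpha,w}|$ runs as before. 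To pass from ``small measure of the level set'' to a bound on $\|u_{\alpha,w}\|_{L^\infty}$ via the layer-cake formula I only need the existence of a threshold $s_1$ with $|E^{s_1}_{\alpha,w}|\ls|\Omega|/2$. For a single $(\alpha,w)$ this holds since $u_{\alpha,w}\in L^{d/(d-1)}(\Omega)$; for the uniform statement under \eqref{eq:parameterchoice}, I would upgrade this to a uniform threshold by first establishing a uniform $L^{d/(d-1)}(\Omega)$ bound on the sequence $u_{\alpha_n,w_n}$ (testing \eqref{eq:primalproblem} against $u^\dag$, using the source condition and the convergence of $v_{\alpha_n,w_n}$ recorded in Remark \ref{rem:intermediatecase}) and then applying Chebyshev.

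The step I expect to be the main obstacle is exactly this uniform $L^{d/(d-1)}$ (or at least $L^1$) control on $u_{\alpha_n,w_n}$ in the Neumann case: the relative isoperimetric inequality only sees one side of the cut, so without a uniform threshold $s_1$ the layer-cake step would have to be reorganized (for instance by splitting $u$ about its median and reapplying the same argument to $u-\mathrm{med}(u)$). This is also the only place where an assumption (source condition or equivalent) really has to enter nontrivially, since the Dirichlet case inherits the global isoperimetric inequality for free.
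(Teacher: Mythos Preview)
Your proposal is correct and follows essentially the same route as the paper: in the Dirichlet case extend by zero and use the full isoperimetric inequality in $\R^d$ together with the slicing identity $\per(E^s_{\alpha,w};\R^d)=\sign(s)\int_{E^s_{\alpha,w}} v_{\alpha,w}$, and in the Neumann case replace this by the relative isoperimetric inequality (derived from the Poincar\'e--Sobolev inequality on $\Omega$) applied once $|E^s_{\alpha,w}|\ls |\Omega|/2$. The paper is terser than you about the uniform threshold $s_1$ in the Neumann case and simply asserts that only large $|s|$ matter; your explicit plan to obtain a uniform $L^{d/(d-1)}$ bound by comparing energies against $u^\dag$ is the right idea, though note that Remark~\ref{rem:intermediatecase} concerns the stronger parameter choice $\|w_n\|^{\sigma-1}/\alpha_n\to 0$ and is not needed here---the energy comparison plus the source condition already suffices under \eqref{eq:parameterchoice}.
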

\begin{proof}
In the Dirichlet case we work with the functional in \eqref{eq:primalproblem} but consider $\TV(\cdot;\R^d)$ among functions in $L^{d/(d-1)}(\R^d)$ constrained to vanish on $\R^d \setminus \Omega$. This can be translated to a formulation on $L^{d/(d-1)}(\Omega)$ by considering $\mathcal{E}:L^{d/(d-1)}(\Omega) \to L^{d/(d-1)}(\R^d)$ the extension by zero on $\R^d \setminus \Omega$ and $\TV(\cdot\, ; \R^d) \circ \mathcal E$. By \cite[Cor.~3.89]{AmbFusPal00} and using that $\Omega$ is a Lipschitz domain we have that 
\[\TV( \mathcal{E}u; \R^d) = \TV(u;\Omega) + \int_{\partial \Omega} |u| \dd\mathcal{H}^{d-1},\]
where we emphasize that the first total variation is computed in $\R^d$ whereas the second one in $\Omega$ as in \eqref{eq:TVdef}. The values of $u$ at the boundary are understood in the sense of traces.

In this situation, since $\mathcal{E}$ is linear we can consider the composition $\TV \circ \,\mathcal{E}$ as a convex positively homogeneous functional in its own right, which gives us that indeed,
\begin{equation}\label{eq:optimalitydirichlet}
v_{\alpha,w} := A^\ast p_{\alpha, w} \in \partial_{L^{d/(d-1)}(\Omega)} (\TV \circ \,\mathcal{E})(u_{\alpha,w}) \subset L^d(\Omega),
\end{equation}
where $p_{\alpha, w}$ is defined in \eqref{eq:optimality}. By general properties of one-homogeneous functionals we then have (as in the first two items of Lemma \ref{lem:subgslicing}) that
\[\TV( \mathcal{E}u_{\alpha,w}; \R^d) = \int_\Omega v_{\alpha,w} u_{\alpha,w}.\]
Moreover, we can argue exactly as in \cite[Prop.~3]{ChaDuvPeyPoo17} (which uses more properties of the subgradient than the equality above) to obtain that also in this case
\begin{equation}\label{eq:perimeterequality}\per(E_{\alpha, w}^s; \R^d) = \sign(s) \int_{E_{\alpha, w}^s} v_{\alpha, w},\end{equation}
where the level sets $E_{\alpha, w}^s$ are defined as in Proposition \ref{prop:single_u_bounded}, and from which we can follow the rest of the proof exactly, since $E_{\alpha, w}^s \subset \Omega$.

In the Neumann case we consider $\TV$ as $\TV(u; \Omega)$. This parallels what is done in \cite[Sec.~6]{IglMer20} and with more details in the 2D case in \cite[Sec.~4.3]{IglMerSch18}. In this case, one uses the estimate (see \cite[Sec.~4.3]{IglMerSch18} for a proof)
\begin{equation}\label{eq:relativeisop} C_\Omega \per(F ; \Omega) \gs \frac{|F|^{\frac{d-1}{d}} |\Omega \setminus F|^{\frac{d-1}{d}}}{|\Omega|^{\frac{d-1}{d}}},\end{equation}
where $C_\Omega$ is the constant of the Poincar\'e-Sobolev inequality
\begin{equation}\label{eq:sobolevineq}\left\|u - \frac{1}{|\Omega|}\int_\Omega u\right\|_{L^{d/(d-1)}(\Omega)} \ls C_\Omega \,\TV(u\, ;\, \Omega).\end{equation}
To see that \eqref{eq:relativeisop} can play the role of the isoperimetric inequality, notice that we only need to use this inequality for large values of $|s|$ and sets of small measure. Therefore we may assume
\[|E^s_{\alpha, w}| \ls \frac{|\Omega|}{2}, \text{ so that } C_\Omega \per(E^s_{\alpha, w} ; \Omega) \gs \frac{1}{2^{\frac{d-1}{d}}}|E^s_{\alpha, w}|^{\frac{d-1}{d}}.\qedhere\]
\end{proof}

\begin{remark}
Let us assume that $A$ and $A^\ast$ preserve boundedness, that is, $Au \in L^\infty(\Sigma)$ for all $u \in L^\infty(\Omega)$ and $A^\ast p \in L^\infty(\Omega)$ whenever $p \in L^\infty(\Sigma)$. Then, if $f,w \in L^\infty(\Sigma)$ and in the situation of Proposition \ref{prop:single_u_bounded}, the optimality condition \eqref{eq:optimality} implies that $v_{\alpha, w}\in L^\infty(\Omega)$ as well. One can then use strong regularity results (see \cite[Thm.~3.1]{Mas75} or \cite[Thm.~3.6(b)]{GonMas94}, for example) to obtain that $\partial\{u_{\alpha,w} >s\} \in C^{1,\gamma}$ for all $\gamma < 1/4$, provided that $d\ls 7$. Moreover, since our estimates are uniform under the assumptions of Theorem \ref{thm:joint_thm}, this regularity can also be made uniform along a sequence \cite[Secs.~1.9, 1.10]{Tam84} as well.

The assumption of both $A$ and $A^\ast$ preserving boundedness is easily seen to be satisfied for convolution operators with regular enough kernels, since in this case $A^\ast$ is of the same type. However, it also holds for other commonly used operators. As an example, let us consider the case of the Radon transform $\mathcal{R}$ of functions on a bounded domain $\Omega \subset \R^d$. In this case one can consider $\Sigma = \mathbb{S}^{d-1} \times (-R, R)$ with $R>0$ large enough so that $\Omega \subset B(0,R)$ and
\[A: L^{d/(d-1)}(\Omega) \subset L^{2d/(2d-1)}(\R^d) \xrightarrow{\,\mathcal{R}\,} L^2(\mathbb{S}^{d-1} \times \R) \xrightarrow{ r_{\mathbb{S}^{d-1}\times (-R,R)}} L^2\big(\mathbb{S}^{d-1}\times (-R,R)\big),\]
the last map being restriction, since $R$ is continuous between the middle two spaces \cite[Thm.~1]{ObeSte82}. Now, if $u \in L^\infty(\Omega)$ then clearly $Au \in L^\infty\big(\mathbb{S}^{d-1} \times (-R,R)\big)$ as well, since the integrals on planes in the definition of $\mathcal{R}$ are then on  domains of uniformly bounded Hausdorff measure $\mathcal{H}^{d-1}$, that is 
\[\left|Au(\theta, t)\right|=\left|\int_{\theta^\perp} u(t\theta + y) \dd\mathcal{H}^{d-1}(y) \right| \ls \|u\|_{L^\infty(\Omega)}\,\mathcal{H}^{d-1}\big( B(0,R)\cap\{x_1=0\}\big)\, \text{ for a.e. }(\theta, t).\]
Similarly, $A^\ast$ is then extension by zero composed with the backprojection integral operator, so we have
\[|A^\ast p(x)| = \left|\int_{\mathbb{S}^{d-1}} p(\theta, x \cdot \theta) \dd \mathcal{H}^{d-1}(\theta)\right| \ls \|p\|_{L^\infty(\mathbb{S}^{d-1}\times(-R, R))}\mathcal{H}^{d-1}(\mathbb{S}^{d-1})\, \text{ for a.e. }x.\]

Observe that for the case of $\Omega\subset\R^2$ and convolutions with $L^2$ kernels boundedness is immediate, since Young's inequality for convolutions used in \eqref{eq:optimality} directly implies $v_{\alpha, w}\in L^\infty(\Omega)$ as soon as $u_{\alpha, w}, f, w \in L^2(\Omega)$.
\end{remark}

\begin{cor}\label{cor:strong_conv}
Under the assumptions of Proposition \ref{prop:single_u_bounded}, for a sequence of minimizers $\{u_{\alpha_n, w_n}\}_n$ with $\alpha_n, w_n$ satisfying \eqref{eq:parameterchoice} and $\alpha_n \to 0$ and $w_n \to 0$ as $n \to \infty$, we have that, up to a subsequence,
\begin{equation}\label{eq:strongconv}u_{\alpha_n, w_n} \xrightarrow[n \to \infty]{} u^\dag\text{ strongly in }L^{\overline p}(\Omega)\text{ for all }\overline p \in (1, \infty),\end{equation}
where $u^\dag$ is an exact solution of $Au=f$ with minimal $\TV$ among such solutions. If there is only one exact solution, then the whole sequence converges to it in the same fashion.
\end{cor}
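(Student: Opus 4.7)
The plan is to first establish uniform bounds on $u_n := u_{\alpha_n, w_n}$, extract a convergent subsequence, identify the limit as $u^\dag$, and finally upgrade the convergence to be strong in every $L^{\bar p}$.

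First, Proposition \ref{prop:single_u_bounded} (or Proposition \ref{prop:bounded_domains}) already provides the uniform $L^\infty$ bound. To obtain a uniform $\TV$ bound I would test the minimality of $u_n$ in \eqref{eq:primalproblem} against $u^\dag$:
\[\tfrac{1}{\sigma}\|Au_n - f - w_n\|_{L^q(\Sigma)}^\sigma + \alpha_n \TV(u_n) \ls \tfrac{1}{\sigma}\|w_n\|_{L^q(\Sigma)}^\sigma + \alpha_n \TV(u^\dag).\]
Writing $\|w_n\|^\sigma = \|w_n\| \cdot \|w_n\|^{\sigma-1}$ and invoking the parameter choice \eqref{eq:parameterchoice} in the form $\|w_n\|_{L^q(\Sigma)}^{\sigma-1} \ls \eta \alpha_n / (C_{q,\sigma}\|A^\ast\|)$ yields both $\TV(u_n) \ls \TV(u^\dag) + o(1)$ and $\|Au_n - (f+w_n)\|_{L^q(\Sigma)}^\sigma \to 0$, so that in particular $Au_n \to f$ strongly in $L^q(\Sigma)$.

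Second, the sequence $u_n$ is now bounded in $\BV(\Omega) \cap L^\infty(\Omega)$. For bounded Lipschitz $\Omega$ this immediately gives boundedness in $L^{d/(d-1)}(\Omega)$. For $\Omega = \R^d$ I would argue that the level-set reasoning behind Proposition \ref{prop:single_u_bounded}, together with the first remark after it, provides a uniformly bounded support for the $u_n$ under the parameter choice, so that they remain bounded in $L^{d/(d-1)}(\R^d)$ as well. In either case, $\BV$ compactness yields a (not relabelled) subsequence converging in $L^1(\Omega)$ to some $u^\ast$. Continuity of $A$ combined with the weak convergence in $L^{d/(d-1)}(\Omega)$ gives $Au^\ast = f$, while lower semicontinuity of $\TV$ together with $\TV(u_n) \ls \TV(u^\dag) + o(1)$ gives $\TV(u^\ast) \ls \TV(u^\dag)$. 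Hence $u^\ast$ is a minimal-$\TV$ solution of $Au = f$; under the uniqueness assumption this forces $u^\ast = u^\dag$, and a standard subsequence argument (every subsequence has a further subsequence converging to the same limit) then yields convergence of the full sequence.

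Finally, since $u^\dag \in L^\infty(\Omega)$ by Remark \ref{rem:nonemptysubg} and $\|u_n\|_{L^\infty(\Omega)}$ is uniformly bounded, one has $\|u_n - u^\dag\|_{L^\infty(\Omega)} \ls 2M$ for some constant $M$, and interpolation with the $L^1$ convergence gives
\[\|u_n - u^\dag\|_{L^{\bar p}(\Omega)}^{\bar p} \ls (2M)^{\bar p - 1}\|u_n - u^\dag\|_{L^1(\Omega)} \to 0\]
for every $\bar p \in (1,\infty)$, which is the claim. The main obstacle is upgrading from $L^1_{\loc}$ to strong $L^1$ convergence in the $\R^d$ case; this is where the uniform compact-support claim (obtained by pushing the level-set argument of Proposition \ref{prop:single_u_bounded} to the sets $\{|u_n|>s\}\setminus B(0,R)$) does the real work, while the remainder is a standard variational compactness argument.
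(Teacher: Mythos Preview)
Your proposal is correct and follows essentially the same route as the paper. The paper's proof simply outsources the low-exponent convergence (your steps 2--4) to \cite[Prop.~3.1]{IglMer20} and the uniform compact-support claim to \cite[Lem.~5.1]{IglMer20}, and then interpolates with the uniform $L^\infty$ bound exactly as you do; you have in effect unpacked those citations into the explicit variational compactness argument (testing against $u^\dag$, extracting a $\BV$-compact subsequence, identifying the limit via lower semicontinuity of $\TV$ and continuity of $A$), which is precisely what they contain. Your identification of the uniform compact-support step as the only nontrivial point in the $\R^d$ case is also exactly right, and the paper handles it by the same level-set mechanism you describe.
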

\begin{proof}
We first notice that the parameter choice \eqref{eq:parameterchoice} is less restrictive than the one needed in \cite[Prop.~3.1]{IglMer20} which provides strong convergence to some $u^\dag$ in $L^{\hat p}_\loc(\Omega)$ for $\hat{p} \in (1, d/(d-1))$ and up to a subsequence by a basic compactness argument. Moreover, if $\Omega=\R^d$, we can apply \cite[Lem.~5.1]{IglMer20} to obtain that all of the $u_{\alpha_n, w_n}$ and $u^\dag$ are supported inside a common ball $B(0,R)$ for some $R>0$. If, in contrast, $\Omega$ is bounded, then $\Omega \subset B(0,R)$ and we may extend to the latter by zero.

For this subsequence (which we do not relabel) and $\hat{p} \ls \overline p$ we have
\begin{equation}\label{eq:hoelderbound}\int_\Omega |u_{\alpha_n, w_n} - u^\dag|^{\overline p} \ls \|u_{\alpha_n, w_n} - u^\dag\|_{L^{\hat p}(B(0,R))}^{\hat{p}} \|u_{\alpha_n, w_n} - u^\dag\|_{L^\infty(\Omega)}^{{\overline p}-\hat{p}},\end{equation}
which using Proposition \ref{prop:single_u_bounded} immediately implies \eqref{eq:strongconv}.

Finally, if the minimal $\TV$ solution of $Au=f$ is unique, any subsequence has a further subsequence converging to this unique solution, so the whole sequence must in turn converge to it.
\end{proof}

\begin{remark}
In the plane, under the same source condition \eqref{eq:sourcecond}, a parameter choice equivalent to the one we use for $d=2$, and additional assumptions for both $A$ and $u^\dag$, linear convergence rates are proved in the $L^2$ setting in \cite[Thm.~4.12]{Val21}, that is $\|u_{\alpha_n, w_n} - u^\dag\|_{L^2(\Omega)} = O(\alpha_n)$ (observe that the parameter choice forces $\|w_n\|=O(\alpha_n)$). Therefore, this result can be combined with ours to obtain also a convergence rate of order $2/\overline p$ in $L^{\overline p}(\Omega)$. To see this, we just notice that by Remark \ref{rem:nonemptysubg} we have $u^\dag \in L^\infty(\Omega)$ in addition to the uniform bound on $\|u_{\alpha_n, w_n}\|_{L^\infty(\Omega)}$, so that as in \eqref{eq:hoelderbound} we have
\[\|u_{\alpha_n, w_n} - u^\dag\|_{L^{\overline p}(\Omega)} \ls \|u_{\alpha_n, w_n} - u^\dag\|_{L^2(\Omega)}^{2/\overline{p}} \|u_{\alpha_n, w_n} - u^\dag\|_{L^\infty(\Omega)}^{(\overline {p}-2)/\overline p} = O(\alpha_n^{2/\overline{p}}).\]
\end{remark}

One might wonder if strong convergence in $L^\infty(\Omega)$ is possible. In fact, it is not:

\begin{example}
Consider (as in Remark \ref{rem:denoisingcase}) denoising in the plane $\R^2$ with parameter $\alpha_n = 1/n$ of $f+w_n$, where for arbitrarily small $c >0$ we define
\begin{gather*}u_{0,0}=f=1_{B(0,1)}\text{ and } w_n = 1_{B\left(0,\sqrt{1 + \frac{c}{n}}\right)} - 1_{B(0,1)},\\ \text{ so that }\|w_n\|_{L^2(\R^2)}=\frac{\sqrt{\pi}c}{n} \text{ and } \frac{\|w_n\|_{L^2(\R^2)}}{\alpha_n} = \sqrt{\pi} c.\end{gather*}
Then since $f+w_n = 1_{B\left(0,\sqrt{1+\frac{c}{n}}\right)}$ we know \cite[Sec.~2.2.3]{ChaCasCreNovPoc10} that $u_{\alpha_n,w_n}$ is also proportional to $1_{B\left(0,\sqrt{1+\frac{c}{n}}\right)}$, making $L^\infty(\R^2)$ convergence impossible. Notice that this situation does not change for a more aggressive parameter choice.
\end{example}

\subsection{Results with density estimates}
The combination of the source condition $\operatorname{Ran}(A^\ast) \cap \partial \TV(u^\dag) \neq \emptyset$ and the parameter choice \eqref{eq:parameterchoice} leads to uniform weak regularity estimates for the level sets of $u_{\alpha, w}$, and boundedness may also be deduced from those. More precisely, recalling that
\[E^s_{\alpha, w} = \big\{ x \,|\, \sign(s)\,u_{\alpha,w}(x) > |s|\big\},\]
it is proved in \cite[Thm.~4.5]{IglMer20} that
\begin{equation}\label{eq:densityest}|B(x,r) \cap E^s_{\alpha, w}| \gs C |B(x,r)|\text{ and } |B(x,r) \setminus E^s_{\alpha, w}| \gs C |B(x,r)|,\end{equation}
for $x \in \partial E^s_{\alpha_, w}$ and $r \ls r_0$, where $r_0$ and $C$ are independent of $x$, $n$ and $s$, and where we have taken a representative of $E^s_{\alpha, w}$ for which the topological boundary equals the support of the derivative of its indicatrix \cite[Prop.~12.19]{Mag12}, that is $\partial E^s_{\alpha, w} = \supp D1_{E^s_{\alpha, w}}$. In turn, this support can be characterized \cite[Prop.~12.19]{Mag12} by the property
\[\supp D1_{E^s_{\alpha, w}} =\left\{ x \in \R^d \,\middle\vert\, 0< \frac{|E^s_{\alpha, w} \cap B(x,r)|}{|B(x,r)|} < 1 \text{ for all } r>0\right\},\]
where we note that it could be that these quotients tend to $0$ or $1$ as $r\to0$.

We refer to the inequalities in \eqref{eq:densityest} as inner and outer density estimates respectively, and the combination of both as $E^s_{\alpha, w}$ satisfying uniform density estimates. Such estimates are the central tool for the results of convergence of level set boundaries in Hausdorff distance in \cite{ChaDuvPeyPoo17, IglMerSch18, IglMer20, IglMer21}.

The proof of \eqref{eq:densityest} is more involved than those in the previous section, but once it is obtained, boundedness of minimizers follows promptly:
\begin{prop}\label{prop:strong_conv_densityest}
Assume that the $E^s_{\alpha, w}$ satisfy uniform density estimates at scales $r \ls r_0$ with constant $C$, and that $\alpha$ is chosen in terms of $\|w\|_{L^q(\Sigma)}$ so that $u_{\alpha, w}$ is bounded in $L^{d/(d-1)}(\Omega)$ for some $p \gs 1$. Then, in fact $\{u_{\alpha,w}\}$ is uniformly bounded in $L^\infty(\Omega)$. If additionally we have a sequence $u_{\alpha_n, w_n} \to u^\dag$ strongly in $L^{\hat p}(\Omega)$ for some $\hat p$, then also $u_{\alpha_n, w_n} \to u^\dag$ strongly in $L^{\overline p}(\Omega)$ for all ${\overline p}$ such that $\hat p \ls {\overline p} < \infty$.
\end{prop}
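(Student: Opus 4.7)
My plan is to mirror the layer-cake argument of Proposition~\ref{prop:single_u_bounded}, but to replace the isoperimetric step by a direct uniform lower bound on the mass of every non-empty level set, extracted from the density estimates~\eqref{eq:densityest}. I focus on the positive part $u_{\alpha,w}^+$; the argument for $u_{\alpha,w}^-$ is identical.

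First, I would establish that there is a threshold $s^*\gs 0$ depending only on the uniform $L^{d/(d-1)}$-bound $M$ on $\{u_{\alpha,w}\}$ and on $|\Omega|$ (and with $s^*=0$ when $\Omega=\R^d$) such that, for every $s>s^*$, either $|E^s_{\alpha,w}|=0$ or $\partial E^s_{\alpha,w}=\supp D1_{E^s_{\alpha,w}}$ is non-empty. Indeed, the characterization recalled before the statement shows that $\partial E^s_{\alpha,w}=\emptyset$ forces $E^s_{\alpha,w}$ to coincide up to negligible sets with a union of connected components of the ambient domain; since $\Omega$ (or $\R^d$) is connected, this leaves only the options that $E^s_{\alpha,w}$ be negligible or essentially all of $\Omega$. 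The latter is ruled out by Chebyshev's inequality $|E^s_{\alpha,w}|\ls M^{d/(d-1)}s^{-d/(d-1)}$ as soon as $s>s^*:=M|\Omega|^{-(d-1)/d}$ (vacuous in $\R^d$, where the full ambient space has infinite measure).

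Next, for $s>s^*$ with $\partial E^s_{\alpha,w}\neq\emptyset$ I would pick any $x\in\partial E^s_{\alpha,w}$ and apply the inner density estimate~\eqref{eq:densityest} at scale $r_0$ to obtain
\[|E^s_{\alpha,w}|\gs |E^s_{\alpha,w}\cap B(x,r_0)|\gs C|B(x,r_0)|=:\delta_0>0,\]
with $\delta_0$ independent of $s,\alpha,w$. Writing $T:=\esssup_\Omega u_{\alpha,w}^+$, the layer-cake formula and the monotonicity of $s\mapsto|E^s_{\alpha,w}|$ then yield, as long as $T>s^*$,
\[M^{d/(d-1)}\gs \int_0^T \tfrac{d}{d-1}s^{1/(d-1)}|E^s_{\alpha,w}|\dd s\gs \delta_0\bigl(T^{d/(d-1)}-(s^*)^{d/(d-1)}\bigr),\]
which forces $T$ to be finite and bounded by a constant depending only on $M$, $s^*$ and $\delta_0$, hence uniformly in $\alpha$ and $w$.

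For the convergence statement, once $\sup_n\|u_{\alpha_n,w_n}\|_{L^\infty(\Omega)}<+\infty$ an a.e.\ convergent subsequence extracted from $u_{\alpha_n,w_n}\to u^\dag$ in $L^{\hat p}$ shows $u^\dag\in L^\infty(\Omega)$ with the same bound. For $\hat p\ls\overline p<\infty$, the elementary pointwise inequality $|a-b|^{\overline p}\ls(|a|+|b|)^{\overline p-\hat p}|a-b|^{\hat p}$ closes the argument via
\[\|u_{\alpha_n,w_n}-u^\dag\|_{L^{\overline p}(\Omega)}^{\overline p}\ls \bigl(2\sup_n\|u_{\alpha_n,w_n}\|_{L^\infty(\Omega)}\bigr)^{\overline p-\hat p}\|u_{\alpha_n,w_n}-u^\dag\|_{L^{\hat p}(\Omega)}^{\hat p}\xrightarrow[n\to\infty]{}0.\]
The step I expect to be most delicate is the first one, namely ensuring that the topological boundary of $E^s_{\alpha,w}$ is non-empty above a parameter-independent threshold on $s$ so that \eqref{eq:densityest} is applicable at all; everything downstream is a direct transcription of the end of the proof of Proposition~\ref{prop:single_u_bounded}.
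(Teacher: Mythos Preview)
Your proposal is correct and follows essentially the same route as the paper: use the inner density estimate to produce a uniform positive lower bound on the mass of every nontrivial level set, then combine with the uniform $L^{d/(d-1)}$ bound to cap the $L^\infty$ norm, and finally interpolate between $L^{\hat p}$ and $L^\infty$ for the convergence claim. The paper's write-up is slightly leaner: rather than introducing a threshold $s^*$ and invoking the layer-cake formula, it argues directly by contradiction via the Markov-type bound $\int_\Omega |u_{\alpha,w}|^{d/(d-1)} \gs |E^M_{\alpha,w}|\,M^{d/(d-1)} \gs C|B(0,r_0)|\,M^{d/(d-1)}$, which absorbs your boundary-nonemptiness discussion (for large $M$ the level set cannot be all of $\Omega$) without making it explicit.
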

\begin{proof}
If $u_{\alpha,w}$ is not bounded in $L^\infty(\Omega)$, then for every $M > 0$ there are $\alpha := \alpha(M)$ and $w:=w(M)$ for which $|E^M_{\alpha, w}|>0$. The inner density estimate at scale $r_0$ for some point $x \in \partial E^M_{\alpha, w}$ then reads
\[|B(x,r_0) \cap E^M_{\alpha, w}| \gs C |B(x,r_0)|,\]
which implies 
\[\int_\Omega|u_{\alpha, w}|^{d/(d-1)} \gs \int_{B(x,r_0) \cap E^M_{\alpha, w}}|u_{\alpha, w}|^{d/(d-1)} \gs C |B(x,r_0)| M^{d/(d-1)},\]
which, since $r_0$ is fixed, contradicts the fact that the family $\{u_{\alpha, w}\}$ is bounded in $L^{d/(d-1)}(\Omega)$. For the second statement, we can argue as in \eqref{eq:hoelderbound}.
\end{proof}

\begin{remark}
In the Dirichlet case of Proposition \ref{prop:bounded_domains} we have assumed only that $\Omega$ is a Lipschitz domain without further restrictions. In this context, unless $\Omega$ is convex, it is not true that $E^s$ is a minimizer of 
\[\per(E) - \int_{E} \mathcal{E} v_{\alpha, w}\text{ among }E \subset \R^d,\]
since these may extend beyond $\Omega$, while the functional $u \mapsto (\TV \circ \,\mathcal{E})(u) - \int_{\Omega} v_{\alpha, w} u$ is only sensitive to variations supported in $\Omega$. This kind of variational problem is used in \cite{IglMer20} and \cite{IglMerSch18} to obtain the density estimates \eqref{eq:densityest}, but for that one needs (see \cite[Lem.~9]{IglMerSch18}) to extend $v_{\alpha, w}$ by a variational curvature of $\Omega$ minorized by a function in $L^{d}(\R^d)$. The existence of such a curvature is an additional restriction on $\Omega$ and it is not satisfied for domains with inner corners in $\R^2$, for example.
\end{remark}

One can also make the bound slightly more explicit in terms of the constant and scale of the density estimates:

\begin{remark}
Let $u$ have level sets $E^s := \{\sign(s) u>|s|\}$ satisfying density estimates with constant $C$ at scale $r_0$. Then, we have 
\[\Vert u \Vert_{L^\infty} \ls\frac{1}{C^{1/p} \cdot |B(0,r_0)|^{1/p}} \Vert u \Vert_{L^p}.\]
This is a direct application of the Markov inequality. Indeed, since there is always a boundary point $x_0$, the level sets $E^s$ which are nonempty must satisfy $|E^s| \gs C |B(x_0,r_0)|=C|B(0,r_0)|,$ which implies
\[ \int_\Omega |u|^p \gs |s|^p |E^s| \gs  |s|^p C |B(0,r_0)|.\]
In particular, this implies
\[\frac{1}{C^{1/p}|B(0,r_0)|^{1/p}}\Vert u \Vert_{L^p} \gs |s|\]
meaning that as soon as $|s|$ exceeds $\frac{1}{C^{1/p} |B(0,r_0)|^{1/p}}\Vert u \Vert_{L^p}$, $E^s$ must be empty.
\end{remark}

\begin{cor}
Under the assumptions of Proposition \ref{prop:strong_conv_densityest} above, let $s$ with $|s| > \|u^\dag\|_{L^\infty}$. Then, we have
\[\limsup_{n \to \infty} \partial E^s_{\alpha_n,w_n} = \emptyset,\]
where $\limsup \partial E^s_{\alpha_n,w_n}$ consists \cite[Definition 4.1]{RocWet98} of all of limits of subsequences of points in $\partial E^s_{\alpha_n,w_n}$.
\end{cor}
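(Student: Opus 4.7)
The plan is to argue by contradiction. If the conclusion failed, then after extraction there would exist, for infinitely many $n$, points $x_n \in \partial E^s_{\alpha_n, w_n}$ (in particular $\partial E^s_{\alpha_n, w_n} \neq \emptyset$ along a subsequence). I would then play a uniform positive lower bound on $|E^s_{\alpha_n, w_n}|$, coming from the inner density estimate, against a vanishing upper bound coming from strong $L^p$ convergence of $u_{\alpha_n, w_n}$ to $u^\dag$.

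For the lower bound, the inner density estimate in \eqref{eq:densityest} applied at each such $x_n \in \partial E^s_{\alpha_n, w_n}$ at the fixed scale $r_0$ yields
\[|E^s_{\alpha_n, w_n}| \gs |E^s_{\alpha_n, w_n} \cap B(x_n, r_0)| \gs C|B(0, r_0)| > 0,\]
with $C$ and $r_0$ independent of $n$. For the upper bound, the second assertion of Proposition \ref{prop:strong_conv_densityest} supplies strong convergence $u_{\alpha_n, w_n} \to u^\dag$ in $L^{\overline p}(\Omega)$ for every finite $\overline p \gs \hat p$. Setting $\delta := (|s| - \|u^\dag\|_{L^\infty(\Omega)})/2 > 0$, the strict inequality $|s| > \|u^\dag\|_{L^\infty(\Omega)}$ ensures that on $E^s_{\alpha_n, w_n}$ one has $\sign(s)\,u_{\alpha_n, w_n} > |s|$ while $\sign(s)\,u^\dag \ls \|u^\dag\|_{L^\infty(\Omega)} < |s| - \delta$ almost everywhere, so $|u_{\alpha_n, w_n} - u^\dag| > \delta$ a.e.\ on $E^s_{\alpha_n, w_n}$. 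Chebyshev's inequality in $L^{\overline p}$ then gives
\[|E^s_{\alpha_n, w_n}| \ls \delta^{-\overline p} \|u_{\alpha_n, w_n} - u^\dag\|_{L^{\overline p}(\Omega)}^{\overline p} \xrightarrow[n\to\infty]{} 0,\]
which contradicts the lower bound above.

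No substantive obstacle is expected: the proof only requires that the density estimate be available at a scale $r_0$ uniform in $n$ (which is precisely what \eqref{eq:densityest} provides) and that strong $L^{\overline p}$ convergence be available for some finite $\overline p$, which is exactly the hypothesis of Proposition \ref{prop:strong_conv_densityest}. In fact the same argument proves the stronger statement that $\partial E^s_{\alpha_n, w_n} = \emptyset$ for all $n$ large enough, since the very existence of a boundary point at the $n$-th step already forces the forbidden positive lower bound on $|E^s_{\alpha_n, w_n}|$.
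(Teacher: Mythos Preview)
Your argument is correct and matches the paper's proof: both combine strong convergence $u_{\alpha_n,w_n}\to u^\dag$ (forcing $|E^s_{\alpha_n,w_n}|\to 0$ via a Chebyshev-type estimate) with the uniform inner density estimate at scale $r_0$ to reach a contradiction. Your closing remark that in fact $\partial E^s_{\alpha_n,w_n}=\emptyset$ for all large $n$ is a valid sharpening, since the argument only uses the existence of a boundary point, not the convergence of the $x_n$.
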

\begin{proof}
By the convergence $u_{\alpha_n,w_n} \to u^\dag$ in $L^{\hat{q}}(\Omega)$, if $|s| > \|u^\dag\|_{L^\infty}$ we have $|E^s_{\alpha_n, w_n}| \to 0$. If we had $x \in \limsup_n \partial E^s_{\alpha_n,w_n}$, we could produce a subsequence $\{x_n\}_n$ in $\Omega$ with $x_n \in \partial E^s_{\alpha_n,w_n}$ and $x_n \to x$ for $x \in \overline\Omega$. Using the inner density estimate for some $r_0>0$, we would end up with
\[|E^s_{\alpha_n,w_n}| \gs |B(x_{\alpha_n}, r_0) \cap E^s_{\alpha_n,w_n}| \gs C |B(0,r_0)|,\]
which contradicts $|E^s_{\alpha_n, w_n}| \to 0$.
\end{proof}

\section{Taut string with weights and unbounded radial examples}\label{sec:string}
In this section we consider denoising of one-dimensional data with a modified Rudin-Osher-Fatemi (ROF) functional with weights in both terms, obtaining a taut string characterization of the solutions which reduces the problem to finding finitely many parameters. The main difficulty is that we allow weights which may degenerate at the boundary, which forces the use of weighted function spaces in all the arguments.

\subsection{One-dimensional weighted ROF problem and optimality condition}
We start with two weights $\phi, \rho$ on  the interval $(0,1)$ for which
\begin{equation}\label{eq:weightasusmptions}\phi, \rho >0 \text{ on }(0,1), \, \rho \in \mathrm{Lip(0,1)} , \, \phi \in C(0,1)\cap L^\infty(0,1),  \text{ and } \frac{1}{\phi} \in L^\infty_\loc(0,1),\end{equation}
where we notice that it could be that $\phi(x)\to 0$ or $\rho(x)\to 0$ as $x\to 0$ or $x \to 1$. Using these weights we consider the weighted denoising minimization problem 
\begin{equation}\label{eq:weightedROF}\inf_{ u \in L^2_\phi(0,1)} \int_0^1 \big(u(x)-f(x)\big)^2\phi(x) \dd x + \alpha \TV_\rho(u),\end{equation}
where we define the weighted total variation with weight $\rho$ in the usual way (see \cite{AmaBel94, Bal01}, for example) as
\begin{equation}\label{eq:weightedTV}\TV_\rho(u) := \sup\left\{ \int_0^1 u(x) z'(x) \dd x \, \middle\vert\, z \in C^1_c(0,1) \text{ with } |z(x)| \ls \rho(x) \text{ for all } x\right\}\end{equation}
and the problem is considered in the weighted Lebesgue space
\[L^2_\phi(0,1) := \left\{ u \, \middle\vert \, \|u\|^2_{L^2_\phi} := \int_0^1 u^2 \phi < \infty\right\},\]
which is a Hilbert space when considered with the inner product
\[\langle v , u\rangle_{L^2_{\phi}} = \int_0^1 \phi\, v u.\]
The predual variables and optimality conditions for \eqref{eq:weightedROF} will then naturally be formulated on a weighted Sobolev space, namely
\begin{equation}\label{eq:W11phi}W^{1,2}_{1,1/\phi}(0,1):=\left\{ U \in L^2(0,1) \, \middle \vert \, U' \in L^2_{1/\phi}(0,1) \right\}.\end{equation}

The first step is to see that under the assumptions \eqref{eq:weightasusmptions}, the denoising problem is still well posed:
\begin{prop}Assume that $f \in L^2_\phi(0,1)$. Then, there is a unique minimizer $u_\alpha$ of \eqref{eq:weightedROF}.
\end{prop}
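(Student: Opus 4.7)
The plan is to apply the direct method of the calculus of variations, with uniqueness following from the strict convexity of the fidelity term.

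First, I would take a minimizing sequence $u_n \in L^2_\phi(0,1)$. Since the functional evaluated at $u \equiv 0$ is finite (because $f \in L^2_\phi$), the infimum is finite, and along the minimizing sequence both $\|u_n-f\|_{L^2_\phi}$ and $\TV_\rho(u_n)$ are bounded. Combined with $f \in L^2_\phi$, this gives a uniform bound on $\|u_n\|_{L^2_\phi}$. To extract a useful subsequence I would work locally: for any compact $K \subset (0,1)$, by the assumption $1/\phi \in L^\infty_\loc(0,1)$ we get a uniform bound on $\|u_n\|_{L^2(K)}$ and hence on $\|u_n\|_{L^1(K)}$; likewise, since $\rho > 0$ on $(0,1)$ and $\rho$ is continuous, $\rho \geq c_K > 0$ on $K$, so by comparing test functions in the definition \eqref{eq:weightedTV} the usual total variation of $u_n$ restricted to $K$ is bounded by $c_K^{-1}\TV_\rho(u_n)$. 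Thus $u_n$ is bounded in $\BV_\loc(0,1)$, and by a diagonal extraction together with the one-dimensional $\BV$ compact embedding into $L^1_\loc$, a subsequence converges to some $u$ in $L^1_\loc(0,1)$ and pointwise a.e.

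Next, I would verify lower semicontinuity of both terms along this subsequence. For the weighted fidelity term, pointwise a.e.\ convergence of $u_n$ to $u$ and Fatou's lemma yield
\[\int_0^1 (u-f)^2 \phi \dd x \leq \liminf_{n \to \infty} \int_0^1 (u_n-f)^2 \phi \dd x,\]
which in particular confirms $u \in L^2_\phi(0,1)$. For the weighted total variation, fix any admissible $z \in C^1_c(0,1)$ with $|z| \leq \rho$; then $z'$ is continuous and compactly supported in $(0,1)$, so $L^1_\loc$ convergence gives $\int u_n z' \to \int u z'$, and taking the supremum over such $z$ yields $\TV_\rho(u) \leq \liminf \TV_\rho(u_n)$. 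Together these show that $u$ attains the infimum.

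For uniqueness, I would observe that $u \mapsto \int_0^1 (u-f)^2 \phi \dd x$ is strictly convex on $L^2_\phi(0,1)$ since $\phi > 0$ a.e., while $\TV_\rho$ is convex as a supremum of linear functionals. Hence the objective in \eqref{eq:weightedROF} is strictly convex on its effective domain, so the minimizer is unique.

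The only subtle point is the compactness step: because $\phi$ and $\rho$ may vanish at the endpoints, global bounds in $L^2$ or classical $\BV$ are not available and one must argue locally before patching via a diagonal subsequence; the assumptions $1/\phi \in L^\infty_\loc$ and $\rho > 0$ on $(0,1)$ are tailored exactly to make this work, and no mass can escape to infinity since the domain $(0,1)$ is bounded.
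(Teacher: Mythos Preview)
Your proof is correct and follows essentially the same approach as the paper: local bounds via the nondegeneracy of $\phi$ and $\rho$ on compact subsets, $\BV_\loc$ compactness to extract an $L^1_\loc$-convergent subsequence, and lower semicontinuity of $\TV_\rho$ via testing against compactly supported $z$. The only minor difference is that for the fidelity term the paper passes to a further weakly $L^2_\phi$-convergent subsequence, whereas you use pointwise a.e.\ convergence and Fatou's lemma; both are valid, and your argument has the small bonus of explicitly supplying the uniqueness part, which the paper's proof omits.
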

\begin{proof}With the assumptions on $\phi, \rho$, these weights stay away from zero on any compact subset of $(0,1)$, which means that we have, for an open subset $A \subset (0,1)$, for which in particular $\overline A \subset \subset (0,1)$, a constant $C_A$ such that for every $u\in L^2_\phi(0,1)$
\[ \int_A \big(u(x)-f(x)\big)^2\phi(x) \dd x + \alpha \TV_\rho(u ; A) \gs  C_A\left( \int_A \big(u(x)-f(x)\big)^2 \dd x + \alpha \TV(u ; A)\right).\]
This means that along a minimizing sequence $\{u_n\}_n$ for \eqref{eq:weightedROF}, we have
\[ \sup_{n} \int_A \big(u_n(x)-f(x)\big)^2 \dd x + \alpha \TV(u_n ; A) < +\infty.\]
Applying Cauchy-Schwarz on the first term and since $\alpha >0$, we obtain
\[ \sup_{n} \int_A \big|u_n| + \alpha \TV(u_n ; A) < +\infty\]
which allows applying \cite[Thm.~3.23]{AmbFusPal00} to conclude that a not relabeled subsequence of $\{u_n\}_n$ converges in $L^1_{\text{loc}}(0,1)$ to some $u_\infty \in \BV_{\text{loc}}(0,1).$ Now, arguing as in \cite[Prop.~1.3.1]{Cam08} we have that $\TV_\rho$ is lower semicontinuous with respect to strong $L^1_{\text{loc}}$ convergence. To see this, let $z \in C^1_c(0,1)$ with $|z|\ls \rho$, so that we have $u_n \xrightarrow{L^1(\supp(z))} u_\infty$ and 
\[ \int_0^1 u_n z' \to \int_0^1  u_\infty z'.\]
Since the left hand side is bounded by $\TV_\rho(u_n)$, this implies in particular that for any such $z$,
\[ \liminf_n \TV_\rho(u_n) \gs \int_0^1 u_\infty z'.\]
Taking the supremum over $z$ as in \eqref{eq:weightedTV}, we obtain the semicontinuity
\begin{equation}\label{eq:TVrhoLSC} \liminf_n \TV_\rho(u_n) \gs \TV_\rho(u_\infty).\end{equation}
Moreover, we may take a further subsequence of $\{u_n\}_n$ converging weakly in $L^2_\phi(0,1)$ to a limit which clearly must be again $u_\infty$, and the first term of \eqref{eq:weightedROF} is lower semicontinuous with respect to this convergence since it involves only the squared norm of $L^2_\phi(0,1)$. This and \eqref{eq:TVrhoLSC} show that $u_\infty$ is actually a minimizer of \eqref{eq:weightedROF}.
\end{proof}

We have that $u$ realizes the infimum in \eqref{eq:weightedROF} if and only if
\begin{equation}\label{eq:weightedROFoptimality}\frac{f - u}{\alpha} \in \partial_{L^2_\phi}\TV_\rho(u) \end{equation}
where the subgradient means that $v \in \partial_{L^2_\phi}\TV_\rho (u)$ if and only if
\[\TV_\rho(u)  + \int_0^1 v (\tilde u - u) \phi \ls \TV_\rho(\tilde u) \text{ for all }\tilde u \in \BV_\rho(0,1) \cap L^2_\phi(0,1).\]

This set is characterized (with assumptions on the weights covering ours) in \cite[Lem.~2.4]{AthJerNovOrl17}, which in one dimension yields the following generalization of \eqref{eq:TVsubgrad2}:
\begin{equation}\label{eq:TVrhosubgrad}\begin{gathered}\text{There exists }\xi \in L^\infty(0,1)\text{ with } |\xi|\ls 1\text{ and }(\rho \xi)' \in L^2_{1/\phi}(0,1),\\\text{ for which }\int_0^1 v\tilde u \phi = (\rho\xi, D\tilde u)(0,1) \text{ for all }\tilde u \in \BV_\rho(0,1) \cap L^2_\phi(0,1) \\\text{ and also } \int_0^1 vu \phi = (\rho\xi, Du)(0,1) = \TV_\rho(u),\end{gathered}\end{equation}
where it is to be noted that in comparison to \eqref{eq:TVsubgrad2}, we always consider the product $\rho\xi$ together, and generally avoid differentiating $\rho$ alone. Like for \eqref{eq:anzellottidef}, the second line of \eqref{eq:TVrhosubgrad} generalizes integration by parts when $\tilde u \in C^\infty_c(0,1)$, so we have that $v\phi = -(\rho \xi)'$ in the sense of distributions, and $\rho \xi \in W^{1,2}_{1,1/\phi}(0,1)$. In analogy to \eqref{eq:greenthm}, it is tempting to interpret this equality in terms of boundary values. However, as remarked in \cite[Below Lemma 2.4]{AthJerNovOrl17}, to which extent this is possible depends on further properties of $\phi$. In our one-dimensional case and noting that because $\phi \in L^\infty(0,1) \cap C(0,1)$ the inverse $1/\phi$ remains bounded away from zero on the closed interval, and we have
\begin{equation}\label{eq:continuousembedding}W^{1,2}_{1,1/\phi}(0,1)\subset W^{1,2}(0,1) \subset C([0,1]),\end{equation}
and as we will prove in Theorem \ref{thm:weighteddual}, in fact we have that $\rho(0) \xi (0) = \rho(1) \xi(1) = 0$, in particular for $v = (f-u)/\alpha$ in \eqref{eq:weightedROFoptimality}.

Since $\TV_\rho$ is positively one-homogeneous, we recognize in the above that the first statement of \eqref{eq:TVrhosubgrad} characterizes $\partial_{L^2_\phi}\TV_\rho (0)$, and by Fenchel duality in $L^2_\phi(0,1)$ (as in \cite[Thm.~III.4.2]{EkeTem99} with $\Lambda=\mathrm{Id}$), we have 
\begin{multline}\label{eq:dualityweightedrof}\min_{ u \in L^2_\phi(0,1)} \int_0^1 \big(u(x)-f(x)\big)^2\phi(x) \dd x + \alpha \TV_\rho(u) \\ = \max_{v \in \partial_{L^2_\phi}\TV_\rho (0)} \frac{1}{\alpha}\langle v , f\rangle_{L^2_\phi} - \frac12 \|v\|^2_{L^2_\phi} = \max_{v \in \partial_{L^2_\phi}\TV_\rho (0)} \int_0^1 \left(\frac{vf}{\alpha} - \frac{v^2}{2}\right)\phi, \end{multline}
where the last maximization problem can be written as
\[\max_{v \in \partial_{L^2_\phi}\TV_\rho (0)} \int_0^1 \frac{1}{\alpha} vf\phi - \frac12 \|v\|^2_{L^2_\phi}= - \left[\min_{v \in \partial_{L^2_\phi}\TV_\rho (0)} \frac12 \Big\|\frac{f}{\alpha} - v\Big\|^2_{L^2_\phi} - \frac{1}{2\alpha^2}\|f\|^2_{L^2_\phi}\right],\]
for which we notice that since the last term does not involve $v$, is the familiar formulation as projection on a convex set.

For explicitly characterizing the minimizers, we will need to give a pointwise meaning to \eqref{eq:weightedROFoptimality}, which turns out to be
\begin{equation}\label{eq:stringoptimality}
\begin{gathered}\int_0^x \big(f(s)-u(s)\big)\, \phi(s)\dd s = -U(x) \text{ for all } x \in (0,1), \text{ where }\\
U \in W^{1,2}_{1,1/\phi, 0}(0,1) \text{ satisfies } |U| \ls \alpha \rho \text{ and } U(x) = \alpha \rho(x) \frac{Du}{|Du|}(x) \text{ for }|Du| \text{-a.e.~} x,
\end{gathered}\end{equation}
as we prove in Theorem \ref{thm:weighteddual} below. Here, avoiding density questions and owing to the embedding \eqref{eq:continuousembedding} we have directly defined 
\[W^{1,2}_{1,1/\phi, 0}(0,1) = \left\{ U \in W^{1,2}_{1,1/\phi}(0,1)\,\mid\, U(0)=U(1)=0\right\}.\]
A first remark about this characterization is that the last equality in the second line of \eqref{eq:stringoptimality} is made possible by the one-dimensional setting, since then functions in $W^{1,2}_{1,1/\phi, 0}(0,1)$ are continuous. Whether such a characterization is possible in higher dimensions, even without weights, is a much more delicate question, see \cite{BreHol16, ChaGolNov15, ComCraDecMal22}. A further remark is that \eqref{eq:stringoptimality} implies in particular
\begin{equation}\label{eq:notquiteoptimality}\int_0^x \big(f(s)-u(s)\big)\, \phi(s)\dd s = -\alpha \rho(x) \frac{Du}{|Du|}(x) \text{ for }|Du| \text{-a.e.~} x,\end{equation}
which is formulated only in terms of the minimizer $u$ and does not involve additional derivatives. However, equation \eqref{eq:notquiteoptimality} only provides information on $\supp |Du|$, so we cannot immediately conclude that if $u$ satisfies it, then it must be a minimizer of \eqref{eq:weightedROF}.

\subsection{The weighted string}
We can pose, for $F \in W^{1,2}_{1,1/\phi}(0,1)$ with $F(0)=0$, the minimization problem
\begin{equation}\label{eq:weightedstring}\min \left\{ \frac{1}{2} \int_0^1 \frac{1}{\phi(s)}\big(\tilde U'(s)\big)^2 \dd s \, \middle \vert \, \tilde U\in W^{1,2}_{1,1/\phi}(0,1), \, |\tilde U-F|\ls \alpha \rho, \, \tilde U(0)=0, \, \tilde U(1)=F(1)\right\},\end{equation}
which can be rewritten, with $U = \tilde U - F$, as
\begin{equation}\label{eq:weightedstring2}\min \left\{ \frac{1}{2} \int_0^1 \frac{1}{\phi(s)}\big( U'(s) + F'(s)\big)^2 \dd s \, \middle \vert \, U\in W^{1,2}_{1,1/\phi,0}(0,1), \, |U|\ls \alpha \rho \right\}.\end{equation}
\begin{lemma}
There is a unique minimizer $\tilde U_0$ of \eqref{eq:weightedstring}.
\end{lemma}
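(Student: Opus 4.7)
The plan is to invoke the direct method of the calculus of variations to obtain existence, and to exploit strict convexity of the objective for uniqueness; I will work mainly with the shifted formulation \eqref{eq:weightedstring2} since its constraint and boundary conditions are homogeneous.

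First I would verify that the admissible set is nonempty: since $F(0)=0$ by hypothesis, $U\equiv 0$ belongs to $W^{1,2}_{1,1/\phi,0}(0,1)$ and trivially satisfies $|U|\leq \alpha\rho$ (equivalently, $\tilde U=F$ is admissible in \eqref{eq:weightedstring}). Next, I would observe that the hypothesis $\phi\in L^\infty(0,1)$ yields $1/\phi(s)\geq \|\phi\|_{L^\infty}^{-1}$ a.e., so
\[\int_0^1 (U')^2\dd s \leq \|\phi\|_{L^\infty}\int_0^1 \frac{(U')^2}{\phi}\dd s,\]
which, combined with the boundary condition $U(0)=0$, controls $\|U\|_{W^{1,2}(0,1)}$ by the weighted Dirichlet energy of $U$ plus a constant depending on $F$. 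In particular, a minimizing sequence $\{U_n\}_n$ is uniformly bounded in the unweighted $W^{1,2}(0,1)$.

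Then I would extract a subsequence $U_n\rightharpoonup U_\infty$ weakly in $W^{1,2}(0,1)$ and (after a further extraction) $U_n'\rightharpoonup U_\infty'$ weakly in $L^2_{1/\phi}(0,1)$; by the compact embedding $W^{1,2}(0,1)\hookrightarrow C([0,1])$, convergence is uniform, so the pointwise constraint $|U_\infty|\leq \alpha\rho$ and the boundary conditions $U_\infty(0)=U_\infty(1)=0$ pass to the limit. Weak lower semicontinuity of the convex functional $V\mapsto \tfrac12\int_0^1 (V+F')^2/\phi$ on $L^2_{1/\phi}(0,1)$ (it is the square of a seminorm, hence convex and continuous, hence weakly l.s.c.) then identifies $U_\infty$ as a minimizer, and $\tilde U_0:=U_\infty+F$ solves \eqref{eq:weightedstring}.

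Finally, for uniqueness I would use that $v\mapsto \tfrac12\int_0^1 v^2/\phi$ is strictly convex on $L^2_{1/\phi}(0,1)$ (since $t\mapsto t^2$ is strictly convex and $1/\phi>0$ a.e.): if $\tilde U_1,\tilde U_2$ were two minimizers, their midpoint would be admissible by convexity of the constraint, and strict convexity forces $\tilde U_1'=\tilde U_2'$ a.e., so $\tilde U_1-\tilde U_2$ is a constant which must vanish because both satisfy $\tilde U_j(0)=0$. The only mildly delicate point is the closedness of the constraint $|\tilde U - F|\leq \alpha\rho$ under weak limits, but this is exactly where the embedding into $C([0,1])$ earned by $\phi\in L^\infty$ pays off, so I do not anticipate a substantial obstacle.
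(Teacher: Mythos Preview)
Your argument is correct and follows essentially the same route as the paper: both use the embedding $W^{1,2}_{1,1/\phi}(0,1)\hookrightarrow W^{1,2}(0,1)\hookrightarrow C([0,1])$ coming from $\phi\in L^\infty$, coercivity via a Poincar\'e-type bound, weak lower semicontinuity of the convex integrand, and closedness of the constraint under uniform limits. You are slightly more explicit than the paper in checking nonemptiness of the admissible set and in spelling out the strict-convexity argument for uniqueness, but there is no substantive difference in strategy.
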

\begin{proof}
Since we have assumed that $\phi \in L^\infty(0,1) \cap C(0,1)$, we have the embedding \eqref{eq:continuousembedding} and the boundary values and constraints are well defined and closed. Moreover, the functional is nonnegative, convex and strongly continuous in $W^{1,2}_{1,1/\phi}(0,1)$, so weakly lower semicontinuous as well. Finally, since $U(0)=0$ and $U(1)=F(1)$ are fixed, the Poincaré inequality in $W^{1,2}_0(0,1)$ provides us with a bound for $\|U\|_{L^2(0,1)}$, so it is coercive in $W^{1,2}_{1,1/\phi}(0,1)$ and we may apply the direct method of the calculus of variations.
\end{proof}

\begin{theorem}\label{thm:weighteddual}
The Fenchel dual of \eqref{eq:weightedstring2}, for which strong duality holds, is equivalent to the weighted ROF problem \eqref{eq:weightedROF}. Specifically, if $U_0$ is the minimizer of \eqref{eq:weightedstring2} and $V_0$ is optimal in the dual problem, then $u_0 = V_0/\phi = U_0'/\phi + f$ is the minimizer of \eqref{eq:weightedROF}. Moreover, for the pair $(u_0, U_0)$ we have that \eqref{eq:stringoptimality} is satisfied (replacing $u,U$ by $u_0, U_0$, respectively), and this condition characterizes optimality of this duality pair.
\end{theorem}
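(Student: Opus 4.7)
The plan is to identify \eqref{eq:weightedstring2} as a constrained quadratic minimization on the Hilbert space $W^{1,2}_{1,1/\phi,0}(0,1)$ amenable to Fenchel--Rockafellar duality, whose dual is naturally formulated on $L^2_\phi(0,1)$ and will coincide, up to an additive constant and a harmless rescaling of the fidelity, with \eqref{eq:weightedROF}. Concretely, since $f \in L^2_\phi(0,1)$, I take $F(x) = \int_0^x f(s)\phi(s)\,\dd s \in W^{1,2}_{1,1/\phi}(0,1)$ with $F(0)=0$ and $F' = f\phi \in L^2_{1/\phi}(0,1)$, and write the primal as $\inf_{U \in X} \mathcal{H}(U) + \mathcal{G}(\Lambda U)$, with $X = W^{1,2}_{1,1/\phi,0}(0,1)$, $\Lambda U = U'$ bounded from $X$ to $L^2_{1/\phi}(0,1)$, $\mathcal{H}(U) = \chi_{\{|U|\ls \alpha\rho\}}(U)$ and $\mathcal{G}(v) = \tfrac{1}{2}\|v+F'\|^2_{L^2_{1/\phi}}$. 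Using the natural duality $(L^2_{1/\phi})^* \cong L^2_\phi$ via the unweighted pairing $\int pv$, the dual reads $\sup_{p \in L^2_\phi} -\mathcal{H}^*(-\Lambda^*p) - \mathcal{G}^*(p)$.

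Completing the square gives $\mathcal{G}^*(p) = \tfrac{1}{2}\|p\|^2_{L^2_\phi} - \langle p,f\rangle_{L^2_\phi}$, where the maximizer is $w = p\phi$. For the other conjugate, $\Lambda^*p$ acts on $U\in X$ by $U\mapsto \int_0^1 pU'$, so I must identify
\[\mathcal{H}^*(-\Lambda^*p) = \sup_{U\in X,\,|U|\ls\alpha\rho} \;-\!\int_0^1 pU' \;=\; \alpha\TV_\rho(p).\]
Rather than approximating arbitrary admissible $U$ by $C^1_c$ competitors in \eqref{eq:weightedTV} (which is delicate precisely because $\rho$ may degenerate at the endpoints), I use the explicit characterization of $\partial_{L^2_\phi}\TV_\rho(0)$ recalled in \eqref{eq:TVrhosubgrad} together with Fenchel biconjugation for the positively one-homogeneous lsc functional $\TV_\rho$, which yields $\TV_\rho(p) = \sup\{\langle v,p\rangle_{L^2_\phi} : v \in \partial_{L^2_\phi}\TV_\rho(0)\}$. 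Each such $v$ is $v = -(\rho\xi)'/\phi$ with $\rho\xi \in W^{1,2}_{1,1/\phi,0}$ and $|\xi|\ls 1$; setting $U = \alpha\rho\xi$ precisely parametrizes the feasible set $\{U\in X : |U|\ls \alpha\rho\}$ (noting that the constraint forces $U$ to vanish wherever $\rho$ vanishes), so the two suprema match. The dual thus equals $\sup_{p\in L^2_\phi} \langle p,f\rangle_{L^2_\phi} -\tfrac{1}{2}\|p\|^2_{L^2_\phi} - \alpha\TV_\rho(p)$, which after completing the square reads $\tfrac{1}{2}\|f\|^2_{L^2_\phi} - \inf_p\big[\tfrac{1}{2}\|p-f\|^2_{L^2_\phi} + \alpha\TV_\rho(p)\big]$, equivalent to \eqref{eq:weightedROF} after absorbing a factor of $2$. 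Strong duality follows from the Fenchel--Rockafellar qualification condition: $U\equiv 0$ is feasible and $\mathcal{G}$ is continuous at $\Lambda 0 = 0 \in L^2_{1/\phi}$.

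The KKT relations at the unique primal-dual pair $(U_0,p_0)$ are $p_0 \in \partial\mathcal{G}(\Lambda U_0)$ and $-\Lambda^* p_0 \in \partial\mathcal{H}(U_0)$. The first simplifies to $p_0 = (U_0' + F')/\phi = U_0'/\phi + f$, yielding $u_0 = V_0/\phi$ with $V_0 = U_0'+F' = p_0\phi$; this is the asserted identification of $u_0$ as the ROF minimizer. Writing $U_0 = \alpha\rho\xi$ with $|\xi|\ls 1$, the second relation combined with \eqref{eq:TVrhosubgrad} applied at $u_0$ (with subgradient element proportional to $(f-u_0)/\alpha$) forces $(\rho\xi)' = (u_0-f)\phi$ in the distributional sense, hence
\[U_0(x) = \int_0^x U_0'(s)\,\dd s = \int_0^x (u_0(s)-f(s))\phi(s)\,\dd s = -\int_0^x (f(s)-u_0(s))\phi(s)\,\dd s,\]
which is the first line of \eqref{eq:stringoptimality}. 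The extremality equality $(\rho\xi, Du_0)(0,1) = \TV_\rho(u_0)$ from \eqref{eq:TVrhosubgrad}, together with the pointwise bound $|\theta(\rho\xi, Du_0,\cdot)| \ls \rho$ on the Radon--Nikod\'ym density with respect to $|Du_0|$, forces $\xi = Du_0/|Du_0|$ at $|Du_0|$-a.e.\ $x$, i.e., $U_0 = \alpha\rho\, Du_0/|Du_0|$ on $\supp |Du_0|$, completing \eqref{eq:stringoptimality}. The boundary conditions $\rho(0)\xi(0) = \rho(1)\xi(1) = 0$ announced earlier follow automatically since $U_0 \in W^{1,2}_{1,1/\phi,0}$. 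Conversely, any pair $(u,U)$ satisfying \eqref{eq:stringoptimality} is a feasible primal-dual pair verifying both KKT relations, hence optimal by convexity.

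The main obstacle I foresee is the identification $\mathcal{H}^*(-\Lambda^*p) = \alpha\TV_\rho(p)$. The inequality $\alpha\TV_\rho(p) \ls \mathcal{H}^*(-\Lambda^*p)$ is direct by testing with $\alpha z$ for $z\in C^1_c(0,1)$ with $|z|\ls\rho$ (these land in $W^{1,2}_{1,1/\phi,0}$ thanks to $1/\phi \in L^\infty_\loc$), but the reverse inequality cannot be argued by naive mollification when $\rho$ degenerates at the endpoints, since smoothing may break the pointwise constraint near the boundary. Invoking the explicit description \eqref{eq:TVrhosubgrad} of $\partial_{L^2_\phi}\TV_\rho(0)$ from \cite[Lem.~2.4]{AthJerNovOrl17}, which already incorporates the correct weighted boundary regularity, is what bypasses this approximation issue and also yields the boundary trace statement $\rho(0)\xi(0)=\rho(1)\xi(1)=0$ for free.
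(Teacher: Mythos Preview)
Your overall strategy is correct and the identification of the dual with \eqref{eq:weightedROF} goes through. However, your route to the central identity $\mathcal{H}^*(-\Lambda^*p)=\alpha\TV_\rho(p)$ and to the pointwise extremality is genuinely different from the paper's, so it is worth spelling out the comparison.

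For the conjugate of the constraint, the paper observes that the inequality $\mathcal{F}^*(-\Lambda^*V)\gs\alpha\TV_\rho(V/\phi)$ is immediate from $C^1_c\cap\{|U|\ls\alpha\rho\}\subset\mathcal L$, and then argues the reverse inequality not by approximating $U\in\mathcal L$ (which, as both you and the paper note, fails by naive mollification when $\rho$ degenerates) but by approximating the \emph{other} variable: it replaces $V/\phi$ by a strict $\TV_\rho$-approximation from \cite[Thm.~4.1.6]{Cam08} and passes to the limit. Your argument instead invokes biconjugation to write $\TV_\rho(p)=\sup\{\langle v,p\rangle_{L^2_\phi}: v\in\partial_{L^2_\phi}\TV_\rho(0)\}$ and then matches this supremum with $\sup_{U\in\mathcal L}-\int pU'$ via the correspondence $U=\alpha\rho\xi$. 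This is cleaner and avoids the external approximation lemma, but it shifts the burden onto the claim that every $v\in\partial_{L^2_\phi}\TV_\rho(0)$ yields $\rho\xi\in W^{1,2}_{1,1/\phi,0}$ (with the zero subscript). You assert this ``already incorporates the correct weighted boundary regularity'', but \eqref{eq:TVrhosubgrad} as stated only gives $(\rho\xi)'\in L^2_{1/\phi}$; the paper in fact flags the vanishing of $\rho\xi$ at the endpoints as something to be established. The missing line is short---test the pairing identity in \eqref{eq:TVrhosubgrad} with $\tilde u\equiv 1$ and $\tilde u(x)=x$, both in $\BV_\rho\cap L^2_\phi$, and use $\rho\xi\in C([0,1])$ from \eqref{eq:continuousembedding}---but it should be written out, since without it your bijection $\mathcal L\leftrightarrow\alpha\,\partial_{L^2_\phi}\TV_\rho(0)$ is incomplete in the direction $v\mapsto U$ when $\rho$ does not vanish at the boundary.

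For the extremality half of \eqref{eq:stringoptimality}, the paper again takes a different path: it identifies $\partial\mathcal F^*(0)$ with the $W^{1,2}_{1,1/\phi}$-closure of $\mathcal L$, proves via Pasch--Hausdorff regularization (Lemma~\ref{lem:phiclosure}) that this closure sits inside $\{U\in C_0:|U|\ls\alpha\rho\}$, and then reads off the pointwise saturation from the subgradient of the weighted Radon-measure norm. Your argument is more direct: since $\rho\xi$ is continuous by \eqref{eq:continuousembedding}, the Anzellotti density is simply $\rho\xi\cdot Du_0/|Du_0|$, bounded pointwise by $\rho$, and the integral equality forces saturation $|Du_0|$-a.e. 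This is legitimate and shorter; just make explicit that it is the continuity of $\rho\xi$ (rather than the general Anzellotti bound $|\theta|\ls\|\rho\xi\|_\infty$) that gives you the \emph{pointwise} bound $|\theta|\ls\rho$ needed here.
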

\begin{proof}
Following for example \cite[Sec.~3.1]{HinPapRau17} and using the notation of \cite[Thm.~III.4.2]{EkeTem99}, we call
\[ \Lambda U = U' \quad \mathcal G(\Lambda U) = \frac12 \|\Lambda U+ \Lambda F\|_{L^2_{1/\phi}}^2 \quad \mathcal F = \chi_{\mathcal L},\]
where $\Lambda : W^{1,2}_{1,1/\phi,0}(0,1) \to L^2_{1/\phi}(0,1)$, $\chi_{\mathcal L}$ denotes the indicator function of $\mathcal L$ and 
\begin{equation}\label{eq:Lset}\mathcal L = \left\{ U \in W^{1,2}_{1,1/\phi,0}(0,1) \, \middle | \,|U| \ls \alpha \rho \right\},\end{equation}
so that \eqref{eq:weightedstring2} can be written as 
\[\min_{U \in W^{1,2}_{1,1/\phi,0}(0,1)} \mathcal F(U) + \mathcal G(\Lambda U).\]
In this situation, the dual problem writes
\[ \min_{V \in L^2_{1/\phi}(0,1)} \mathcal F^\ast(-\Lambda^\ast  V) + \mathcal G^\ast(V),\]
where
\[\mathcal G^\ast(V) = \int_0^1 \frac{1}{\phi} \left(\frac {V^2}{2} - \Lambda F V\right),\]
and taking into account $W^{1,2}_{1,1/\phi, 0}(0,1) \subset L^2(0,1) \subset \big(W^{1,2}_{1,1/\phi, 0}(0,1)\big)^\ast$ we have
\[\langle V, \Lambda U\rangle_{L^2_{1/\phi}} = \int_0^1 \frac{1}{\phi} V U' =  \scal{-\left(\frac{V}{\phi}\right)'}{U}_{\left(\big(W^{1,2}_{1,1/\phi,0}\big)^\ast, W^{1,2}_{1,1/\phi,0}\right)}\]
which in turn implies $\Lambda^\ast V = -(V/\phi)'$ and 
\begin{align*}\mathcal F^\ast(-\Lambda^\ast V) &= \sup_{U \in \mathcal L} \scal{\left(\frac{V}{\phi}\right)'}{U}_{\left(\big(W^{1,2}_{1,1/\phi,0}\big)^\ast, W^{1,2}_{1,1/\phi,0}\right)} \\&= \sup_{U \in \mathcal L} -\scal{\frac{V}{\phi}}{U'}_{L^2} = \sup_{U \in \mathcal L}\, \int_0^1 \frac{1}{\phi} U' V
\gs\alpha\TV_{\rho}(V / \phi),\end{align*}
where for the last inequality we have used that $\big\{U \in C^1_c(0,1) \,\big\vert\, |U|\ls \alpha \rho\big\} \subset \mathcal L$. We would like to have equality in this last inequality, which holds in particular when
\begin{equation}\label{eq:tightnessofdual}\alpha\TV_{\rho}(V / \phi) \gs \int_0^1 \frac{1}{\phi} U' V \text{ for every }V \in L^2_{1/\phi}(0,1) \cap \BV_\rho(0,1) \text{ and all }U \in \mathcal{L}.\end{equation}
Next, we notice that since \[(\TV_\rho)^\ast = \chi_{\overline{D(\mathcal L \cap C^1_c(0,1))}^{L ^2_{1/\phi}}},\]
where $DU=U'$ for $U \in W^{1,2}_{1,1/\phi,0}$, the statement \eqref{eq:tightnessofdual} is in fact equivalent to density of $C^1_c(0,1) \cap \mathcal{L}$ in $\mathcal{L}$ in the strong $W^{1,2}_{1,1/\phi}$ topology. Such a density property can not be obtained by directly mollifying elements of $\mathcal{L}$, because $\rho$ being nonconstant implies that the mollified functions could violate the pointwise constraint; a modified mollifying procedure has been considered in \cite{HinRau15}, but only for continuous $\rho$ with a positive lower bound, which does not cover our case. Instead, we can replace $V/\phi$ in \eqref{eq:tightnessofdual} by a sequence of smooth approximations for which $\TV_\rho$ converges (that is, in strict convergence), and then pass to the limit. This type of approximation of $\TV_\rho$ has been proved in \cite[Thm.~4.1.6]{Cam08} for Lipschitz $\rho$ but without the lower bound, allowing $\rho$ to vanish at the boundary and thus covering our situation.

Summarizing, the dual problem writes
\[\min_{V \in L^2_{1/\phi}(0,1)} \alpha \TV_\rho(V/\phi) + \int_0^1 \frac{1}{\phi}\left( \frac{V^2}{2} - F'V\right) \]
that clearly has the same minimizers as
\[\min_{V \in L^2_{1/\phi}(0,1)} \alpha \TV_\rho(V/\phi) + \int_0^1 \frac{1}{\phi} \frac{(V-F')^2}{2},\]
which in turn becomes \eqref{eq:weightedROF} if we define $u:=V/\phi$ and $f:=F'/\phi$, both of which belong to $L^2_\phi(0,1)$.

The optimality conditions for minimizers $U_0, V_0$ for this pair of problems are then
\begin{equation}\label{eq:dualstringoptimality}\begin{aligned}\Lambda U_0 &\in \partial_{L^2_{1/\phi}} \mathcal{G}^\ast(V_0) = \{V_0-\Lambda F\} = \{\phi(u_0 - f)\} \text{ for }u_0=\frac{V_0}{\phi}, \text{ and }\\
U_0 &\in \partial_{W^{1,2}_{1,1/\phi,0}} \mathcal{F}^\ast (-\Lambda^\ast V_0 ).\end{aligned}\end{equation}
Note that we use the results from \cite{EkeTem99}, which means that the subgradients of $\mathcal F^\ast$ are elements of the primal space $W^{1,2}_{1,1/\phi,0}(0,1)$ and not of the bidual $\big(W^{1,2}_{1,1/\phi,0}(0,1)\big)^{\ast\ast}$. In this case, we have (see \cite[Prop.~7]{BreHol16})
\[ \partial \mathcal F^\ast(0) = \overline{\mathcal L}^{W^{1,2}_{1,1/\phi}}, \]
where the closure is taken in the strong topology of $W^{1,2}_{1,1/\phi,0}(0,1)$. Moreover, we can also extend $\mathcal F$ (we denote by $\hat {\mathcal F}$ this extension) to $C_0([0,1])$, the space of continuous functions on $[0,1]$ which vanish on the boundary. Then, $\hat{\mathcal F}^\ast$ is defined on Radon measures and we have
\[\partial \hat{\mathcal F}^\ast(0) = \overline{\mathcal L}^{C_0}, \]
where the closure is now taken with respect to uniform convergence in $[0,1]$. In fact, these two closures satisfy
\begin{equation}\label{eq:phidensity}\overline{\mathcal L}^{W^{1,2}_{1,1/\phi}} \subset \overline{\mathcal L}^{C_0} = \big\{ U \in C_0([0,1]) \, \big | \,|U| \ls \alpha \rho \big\},\end{equation}
as shown in Lemma \ref{lem:phiclosure} below. This allows us to relate $\partial \mathcal{F}^\ast(0)$ to subgradients of the weighted total variation norm for measures, which (see for example \cite[Lem.~3.1]{HinPapRau17}) satisfy for each $\mu \in \big(C_0([0,1])\big)^\ast$ that
\[\partial \|\cdot\|_{\mathcal{M}_{\alpha \rho}}(\mu) \cap C_0([0,1]) = \left\{ U \in C_0([0,1]) \, \middle | \, U(x)=\alpha \rho(x)\frac{\mu}{|\mu|}(x) \text{ for }\mu\text{-a.e.~} x, \ |U| \ls \alpha \rho \right\},\]
where 
\[ \|\cdot\|_{\mathcal{M}_{\alpha \rho}}(\mu) = \sup\left\{ \int \varphi \dd \mu \ \middle\vert\ \varphi \in C_0([0,1]) \text{ with } |\varphi(x)| \ls \alpha \rho (x) \text{ for all } x \in [0,1] \right\}.\]
With this, in the two lines of \eqref{eq:dualstringoptimality} we have that 
\begin{gather*}
\Lambda U_0 = \phi(u_0-f)\text{ if and only if } 
U_0(x)=\int_0^x \phi(s)(u_0(s)-f(s)) \dd s \text{ for all }x\in(0,1), \text{ and }\\
\scal{-\Lambda^\ast V_0}{U_0}_{\left(\big(W^{1,2}_{1,1/\phi,0}\big)^\ast, W^{1,2}_{1,1/\phi,0}\right)} = \TV_\rho(V_0/\phi)\text{ if and only if } \\ U_0(x) = \alpha \rho(x) \frac{Du_0}{|Du_0|}(x) \text{ for }|Du_0|{-a.e.~}x \text{, and } |U_0(x)| \ls \alpha \rho(x) \text{ for all }x \in (0,1),\end{gather*}
with which we arrive at \eqref{eq:stringoptimality}.
\end{proof}

\begin{lemma}\label{lem:phiclosure}
Let $\mathcal L$ be defined as in \eqref{eq:Lset}. Then, \eqref{eq:phidensity} holds.
\end{lemma}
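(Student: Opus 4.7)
The plan is to prove the two assertions in \eqref{eq:phidensity} separately. The first inclusion $\overline{\mathcal L}^{W^{1,2}_{1,1/\phi}} \subset \overline{\mathcal L}^{C_0}$ follows from the continuous embedding \eqref{eq:continuousembedding}: any $W^{1,2}_{1,1/\phi}$-convergent sequence from $\mathcal L$ also converges uniformly on $[0,1]$, and since each term vanishes at the endpoints, so does the limit, placing it in $\overline{\mathcal L}^{C_0}$. The inclusion $\overline{\mathcal L}^{C_0} \subset \{U \in C_0([0,1]) : |U| \ls \alpha \rho\}$ is immediate, since both $C_0$-membership and the pointwise bound $|U| \ls \alpha \rho$ pass to uniform limits.

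The main work is the reverse inclusion: given $U \in C_0([0,1])$ with $|U| \ls \alpha \rho$, I would construct a sequence $V_n \in \mathcal L$ with $V_n \to U$ uniformly in three steps. First, shrink to $\tilde U_n := (1-1/n) U$, obtaining the strict bound $|\tilde U_n| \ls (1-1/n)\alpha \rho$. Second, multiply by a cutoff $\psi_n \in C^\infty_c((0,1))$ with $\psi_n \equiv 1$ on $[2/n, 1-2/n]$ and $\supp \psi_n \subset [1/n, 1-1/n]$, giving $\hat U_n := \psi_n \tilde U_n$ with compact support in $(0,1)$, which still satisfies $|\hat U_n| \ls (1-1/n)\alpha\rho$ and converges uniformly to $U$ because $U \in C_0([0,1])$. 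Third, replace $\hat U_n$ by its piecewise linear interpolation $V_n$ on a mesh including the endpoints $1/n$, $1-1/n$, with mesh size $h_n$ to be chosen small.

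The key estimate is that on each subinterval $[x_i, x_{i+1}]$, the convexity of piecewise linear interpolation and the Lipschitz continuity of $\rho$ with constant $L$ (from \eqref{eq:weightasusmptions}) yield
\[|V_n(x)| \ls (1-1/n)\alpha \max\bigl(\rho(x_i), \rho(x_{i+1})\bigr) \ls (1-1/n)\alpha\bigl(\rho(x) + L h_n\bigr),\]
and choosing $h_n$ small compared to $\min_{[1/n, 1-1/n]} \rho > 0$ makes the right-hand side $\ls \alpha \rho(x)$. Since $V_n$ is piecewise linear with compact support in $(0,1)$ and $1/\phi \in L^\infty_\loc(0,1)$, one has $V_n \in W^{1,2}_{1,1/\phi,0}(0,1)$ with $V_n(0)=V_n(1)=0$, hence $V_n \in \mathcal L$. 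The uniform convergence $V_n \to U$ follows from the standard error estimate for piecewise linear interpolation of the uniformly continuous $\hat U_n$ combined with $\hat U_n \to U$ uniformly.

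The main obstacle is that the obstacle $|U| \ls \alpha \rho$ may be tight precisely where $\rho$ vanishes at the boundary, so direct mollification of $U$ would in general violate it. The combination of the shrinkage factor $(1-1/n)$ and the interior cutoff $\psi_n$ creates the slack needed to absorb a Lipschitz perturbation of the trajectory, while the Lipschitz regularity of $\rho$ assumed in \eqref{eq:weightasusmptions} converts the mesh size into a controllable error in the obstacle constraint. Once this tightness issue is handled, no further smoothing is needed: the piecewise linear approximants are themselves in the required weighted Sobolev space thanks to the local boundedness of $1/\phi$.
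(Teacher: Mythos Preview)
Your proof is correct and complete. The two easy inclusions are handled just as in the paper; the real content is the reverse inclusion, and there your approach differs from the paper's.

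The paper instead uses Pasch--Hausdorff regularization with respect to the intrinsic metric $d_\phi(t,s)=\big|\int_t^s \phi\big|$: for $U=U^+-U^-$ it sets $U_n^\pm(s)=\inf_t\{n\,d_\phi(t,s)+U^\pm(t)\}$ and reassembles $U_n$ from these. This construction is automatically $n$-Lipschitz in the $d_\phi$-metric, hence lies in $W^{1,\infty}_{1,1/\phi,0}\subset W^{1,2}_{1,1/\phi,0}$, and because $0\le U_n^\pm\le U^\pm$ pointwise one gets $|U_n|\le|U|\le\alpha\rho$ without any shrinkage trick; uniform convergence follows from monotonicity of $U_n^\pm$ in $n$ and Dini's theorem. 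Your route---shrink by $(1-1/n)$, cut off near the endpoints, then piecewise-linearly interpolate on a fine mesh---is more elementary and avoids the regularization machinery, but it relies explicitly on the Lipschitz constant $L$ of $\rho$ (to convert mesh size into obstacle slack) and on the positivity of $\min_{[1/n,1-1/n]}\rho$ to absorb that error. The paper's approach has the small advantage that the pointwise constraint is preserved for free, so the Lipschitz regularity of $\rho$ is not actually used in that step; on the other hand, your argument makes the mechanism more transparent and requires no knowledge of inf-convolution type regularizations.
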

\begin{proof}
Since $W^{1,2}_{1,1/\phi, 0}(0,1)$ is continuously embedded in $C_0([0,1])$, we immediately have
\[\overline{\mathcal L}^{W^{1,2}_{1,1/\phi}} \subset \overline{\mathcal L}^{C_0}.\]
Moreover, the constraint $|U|\ls \alpha \rho$ is closed in $C_0([0,1])$, which shows
\[\overline{\mathcal L}^{C_0} \subset \big\{ U \in C_0([0,1]) \, \big | \,|U| \ls \alpha \rho \big\}.\]
For the opposite containment, we consider Pasch-Hausdorff regularizations with respect to the metric induced by the weight $\phi$ of the positive and negative parts $U^\pm$ of $U=U^+-U^-$, defined by
\[U_n^\pm(s) := \inf_{t \in (0,1)} \left\{ n\,d_\phi(t,s) + U^{\pm}(t) \right \},\]
where \[d_\phi(t,s) := \left | \int_t^s \phi(\omega) \dd \omega \right |.\]
Defining $U_n(s)$ by $U_n^+(s)$ if $U(s)>0$ and $-U_n^-(s)$ otherwise, so that again $U_n = U_n^+ - U_n^-$, we obtain regularized functions which are Lipschitz in this metric (this result dates back to \cite{Hau19}, see also \cite[Thm.~2.1]{Gut20} for a proof), hence belonging to $W^{1,\infty}_{1,1/\phi,0}(0,1) \subset W^{1,2}_{1,1/\phi,0}(0,1)$ and which are by definition pointwise bounded by $U$, so they remain in the constraint set. Moreover, the $U_n$ converge uniformly to $U$ in $[0,1]$, even if $\phi$ can vanish at the boundary. To see this, we notice that the functions $U_n^{\pm}$ are pointwise increasing with respect to $n$ and converging to $U^{\pm}$ respectively, to use Dini's theorem on both sequences.
\end{proof}

\subsection{Switching behavior}\label{sec:switching}
Defining the bijective transformation
\begin{align*}\Phi:[0,1]&\to \left[0, \int_0^1 \phi(\omega) \dd \omega\right]\\
s &\mapsto \int_0^{s} \phi(\omega) \dd \omega\end{align*}
and denoting $\check U := U \circ \Phi^{-1}$ for $U \in W^{1,2}_{1,1/\phi,0}(0,1)$, we have that $\check U' \in L^2(0,\Phi(1))$ and, using the Poincar\'e inequality in $W^{1,2}_0(0,\Phi(1))$, that $\check U \in L^2(0,\Phi(1))$ as well. Likewise, from $\check U \in W^{1,2}_0(0,\Phi(1))$ we can conclude $U \in W^{1,2}_{1,1/\phi,0}(0,1)$ using the inequality \[\|U\|_{L^2} \ls \|U\|_{L^\infty} \ls \|U'\|_{L^2} \ls \|U'\|_{L^2_{1/\phi}} \|\phi\|_{L^\infty}.\] This implies that \eqref{eq:weightedstring2} is equivalent to
\begin{equation}\label{eq:weightedstring3}\min \left\{ \frac{1}{2} \int_0^{\Phi(1)} \big(\check U'(t) + \check F'(t)\big)^2 \dd t \, \middle \vert \, \check U\in W^{1,2}_0\big(0,\Phi(1)\big), \, |\check U|\ls \alpha \left( \rho \circ \Phi^{-1} \right) \right\},\end{equation}
where similarly $\check F = F \circ \Phi^{-1}$. In turn, arguing as in \cite[Thm.~4.46]{SchGraGroHalLen09} the unique minimizer of \eqref{eq:weightedstring3} is the same for any strictly convex integrand, so in particular it can be obtained from
\begin{equation}\label{eq:weightedstring4}\min \left\{ \int_0^{\Phi(1)} \big[\big(\hat U'(t)\big)^2 + 1 \big]^\frac{1}{2} \dd t \, \middle \vert \, \hat U\in W^{1,2}_0\big(0,\Phi(1)\big), \, \check F - \alpha \left( \rho \circ \Phi^{-1} \right) \ls \hat U\ls \check F + \alpha \left( \rho \circ \Phi^{-1} \right) \right\},\end{equation}
by setting $\check U = \hat U - \check F$. This is now a `generalized' taut string formulation that fits in those considered in \cite[Lem.~5.4]{GraObe08}. One can then argue as in \cite[Lem.~9]{BauMunSieWar17} (which is directly based on the previously cited result) to conclude that for any $\delta >0$ the interval $(\delta, \Phi(1)-\delta)$ can be partitioned into finitely many subintervals on which one or neither of the constraints is active. The reasoning behind such a result is that in the form \eqref{eq:weightedstring4}, one is minimizing the Euclidean length of the graph of a continuous function with constraints from above and below, so as long as these constraints are at some positive distance $\epsilon$ apart, switching from one constraint to the other being active must cost at least $\epsilon$ and enforce a subinterval in which neither is active. In our case we need to restrict the interval to avoid the endpoints, because in our setting the weights are potentially degenerate and $\rho \circ \Phi^{-1}$ may vanish at $0$ or $\Phi(1)$, and with it the distance between the two constraints. For such degenerate weights, it is enough to assume that each of the two functions $\check{F}\pm \alpha \big(\rho \circ \Phi^{-1}\big)$ is either convex or concave on a neighborhood of $0$ and $1$ to ensure only finitely many changes of behavior.

With this property and taking into account Theorem \ref{thm:weighteddual} we can go back to \eqref{eq:stringoptimality} to see that the minimizer of \eqref{eq:weightedROF} alternates between the three behaviors
\begin{equation}\label{eq:weightedswitching}u(x) = f(x) - \alpha \frac{\rho'(x)}{\phi(x)},\quad u'(x)=0,\quad u(x) = f(x) + \alpha \frac{\rho'(x)}{\phi(x)}\quad\text{ for a.e.}~x\end{equation}
finitely many times on $(\delta,1-\delta)$ for any $\delta>0$, and on $(0,1)$ for data $f$ which is not oscillating near $0$ and $1$, and a subinterval corresponding to either the first or the last case is always followed by another on which $u'=0$.

\subsection{Denoising of unbounded radial data}
We can apply the results above to find minimizers in the case of radially symmetric data $f \in L^2(B(0,1))$ of 
\begin{equation}\label{eq:ballROF}\min_{u \in L^2(B(0,1))} \int_{B(0,1)} |u-f|^2 \dd x + \alpha \TV(u),\end{equation}
which considered in $\R^d$ corresponds to \eqref{eq:weightedROF} with $\rho(r)=\phi(r)=r^{d-1}$, 
\begin{equation}\label{eq:radialstringfns}\Phi(s)=\frac{1}{d} s^d,\quad(\rho \circ \Phi^{-1})(t) = (t d)^{(d-1)/d}, \ \text{ and}\quad \frac{\rho'(r)}{\phi(r)}=\frac{d-1}{r},\end{equation}
where we remark that \eqref{eq:weightedswitching} for this particular case was claimed without proof in \cite{Jal16}. Indeed, it is easy to guess the form of this term by formally differentiating \eqref{eq:stringoptimality}, but since this is not possible, we prefer to rigorously justify the behavior of minimizers by the arguments of the previous subsection. The earlier work \cite{StrCha96} also contains some results about nearly explicit minimizers for piecewise linear or piecewise constant radial data.

\begin{figure}
\centering
 \includegraphics[width = 0.45\textwidth]{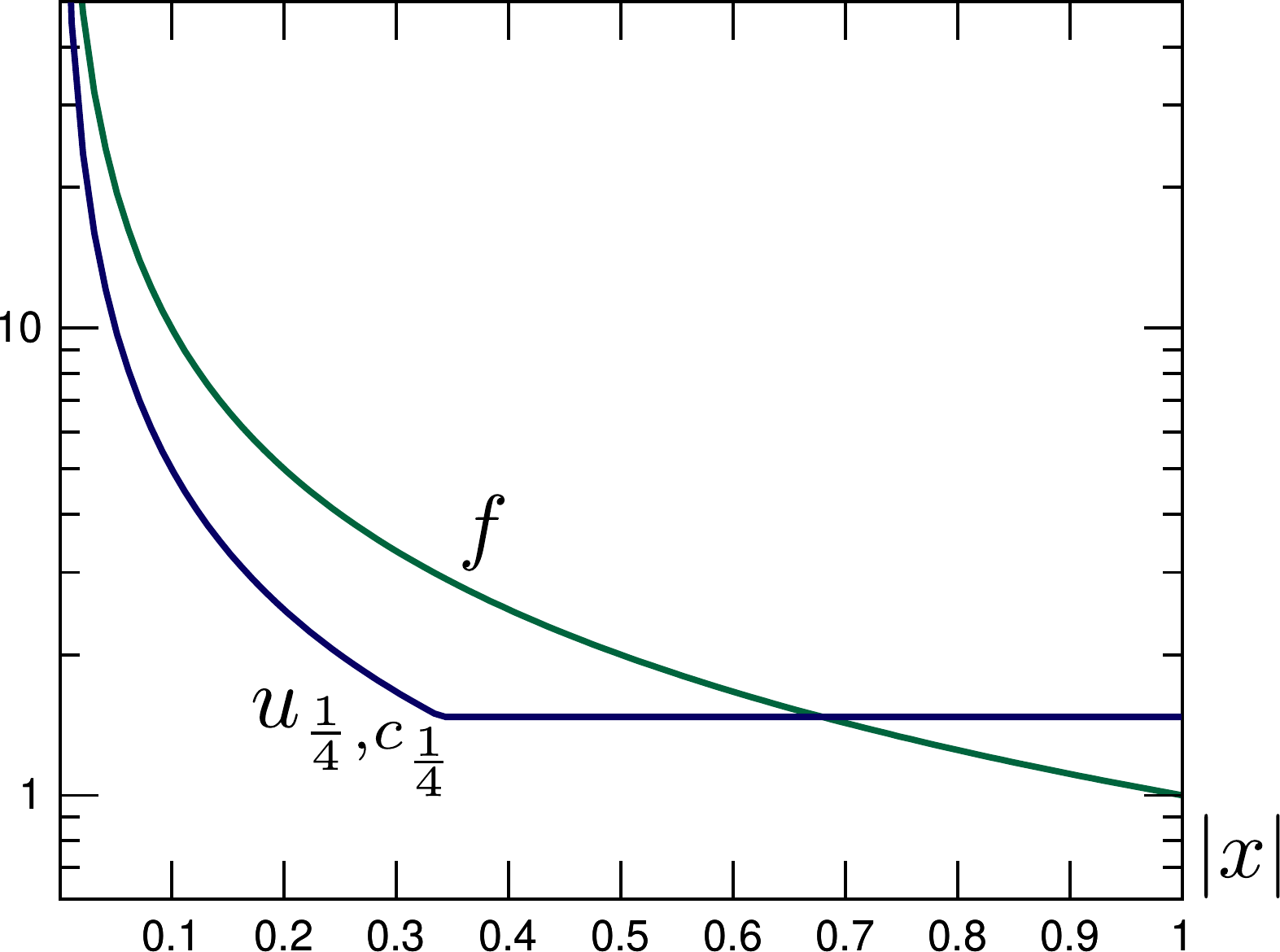}
 \caption{Logarithmic plot of the radial profile of the unbounded minimizer \eqref{eq:ualphac} of the denoising problem in $\R^3$ with data $f(x)=1/|x| \in L^2\big(B(0,1)\big)$, and $\alpha=1/4$ resulting in $c \approx 0.34$.}
 \label{fig:unbounded}
\end{figure}

In particular, let us compute explicitly the solution in the case $d=3$ and
\[f=\frac{1}{|\cdot|}\in L^2_{\loc}(\R^3).\]
In this case we have 
\[F(s)=\int_0^s f(r) \phi(r) \dd r = \frac{s^{d-1}}{d-1} = \frac{s^2}{2}\quad\text{and}\quad\check{F}(t) = (F \circ \Phi^{-1})(t) = \frac{(dt)^{(d-1)/d}}{d-1}=\frac{(3t)^{2/3}}{2},\]
and using \eqref{eq:radialstringfns} we conclude that $\check{F} + \alpha \big(\rho \circ \Phi^{-1}\big)$ is concave for all $\alpha >0$ while $\check{F} - \alpha \big(\rho \circ \Phi^{-1}\big)$ is strictly concave for $0< \alpha < 1/(d-1)=1/2$ and convex for $\alpha \geq 1/(d-1)=1/2$. Therefore, \eqref{eq:weightedswitching} tells us that either $u(r)=(1 - 2\alpha)/r$ or $u'(r)=0$ on each of finitely many subintervals. We consider the simplest case of two such intervals and check that we can satisfy the optimality condition for it. First, the Neumann boundary condition forces $u'(1)=0$ since the radial weight equals one there. Moreover, if $f \gs 0$, the minimizer $u$ of \eqref{eq:ballROF} satisfies $u\gs 0$ as well, because in this case, replacing $u$ with $u^+$ cannot increase either term of the energy. Taking this into account, as well as the fact that $u$ should be continuous \cite{CasChaNov11}, we consider for $c \in (0,1)$ the family of candidates
\begin{equation}\label{eq:ualphac}u_{\alpha, c}(r) := \begin{cases} \frac{1-2\alpha}{r} & \text{ for }r \in (0, c) \\ \frac{1-2\alpha}{c} & \text{ for }r \in (c, 1),\end{cases}\end{equation}
where $\alpha \ls 1/2$ is required so that $u \gs 0$ is satisfied. Moreover, we have also excluded segments with value $(1+2\alpha)/r$, since in that case the fidelity cost is the same as for $(1-2\alpha)/r$, but with a higher variation. To certify optimality of \eqref{eq:ualphac}, we attempt to satisfy the pointwise characterization \eqref{eq:stringoptimality}. The condition $U(1) = 0$, for $U$ defined by the first line of \eqref{eq:stringoptimality} forces
\[ \int_0^1 (u(r)-f(r)) r^{2} \dd r =  - \int_0^c \frac{2\alpha}{r} r^2 \dd r + \int_c^1 r^2 \left(\frac{1-2\alpha}{c}\right)  - r \dd r =0,\]
which is equivalent to
\[ \alpha c^2 + \frac{1-c^2}{2} + \frac{1-c^3}{3} \frac{2\alpha - 1}{c} = 0,\]
which simplifies to 
\begin{equation}\label{eq:cubic}c^3 + \frac{3c}{2\alpha -1} + 2 = 0,\end{equation}
which if $0 < \alpha < 1/2$ can be solved to find a $c \in (0,1)$, as can be easily seen using the intermediate value theorem. Now, since for the family $u_{\alpha,c}$, one has $\frac{Du}{|Du|} = -1$ on $(0,c] = \supp(|Du|)$, one can check that for $r$ on this interval, we have 
\[ U(r) = \alpha \rho(r) \frac{Du}{|Du|}(r). \]
Now, it remains to check that the inequality $|U|\ls \alpha \rho$ holds on $(c,1)$ (it holds on $(0,c)$ thanks to the previous argument). Since at $c$, both functions involved are equal, it is enough to show
\[ \sign\big(U(r)\big)(f(r) - u_{\alpha,c}(r)) \phi(r) =\frac{d}{dr} |U(r)| \ls \alpha \rho'(r) \text{ for } r>c,\]
which, using the definition of $u_{\alpha, c}$, is satisfied in particular if 
\[ \left \vert \frac 1r - \frac{1-2\alpha}{c} \right \vert r^2 \ls 2\alpha r, \text{ or } 1 \ls \frac{r}{c} \ls \frac{1+2\alpha}{1-2\alpha}\]
which, as long as $\frac 14 \ls \alpha < \frac 12$ and taking into account \eqref{eq:cubic}, is satisfied as soon as $c \ls r \ls 1$. Equation \eqref{eq:cubic} can be solved directly, for example for $\alpha = \frac14$, and we obtain a solution $c_{\frac14} \in (0,1)$, which guarantees that \eqref{eq:ualphac} is indeed a minimizer of \eqref{eq:ballROF}. Thus, we have obtained an explicit example of an unbounded minimizer of \eqref{eq:ballROF}, whose graph is depicted in Figure \ref{fig:unbounded}.
\begin{remark}\label{eq:radialzexplicit} We note here (as kindly pointed out by an anonymous reviewer) that using techniques of \cite{AndCasDiaMaz02} it is also possible to find an explicit vector field $\tilde z$ satisfying the characterization \eqref{eq:TVsubgrad2} to guarantee that \eqref{eq:ualphac} is a solution of the denoising problem with data $1/r$. Denoting $e_r = x/|x|$ and
\[\tilde z(x) = z(r)e_r, \text{ so that } \div z(x) = z'(r) + \frac{2}{r}z(r) \text{ for }d=3,\]
one can define
\[z(r)=\begin{cases}
-1 &\text{ for }r \in (0, c)\\
\frac{1}{2\alpha}\left( \frac{1}{r^2} - 1\right) + \frac{1-2\alpha}{3c\alpha}\left( r - \frac{1}{r^2}\right) &\text{ for }r \in [c, 1)\\
0 & \text{ for }r \in [1, +\infty)
\end{cases}\]
for which continuity as well as $L^\infty$ boundedness by $1$ holds for $c$ solving \eqref{eq:cubic}. Moreover
\[z'(r)+\frac{2}{r}z(r) = \begin{cases}-\frac{2}{r} &\text{ for }r \in (0, c)\\
\frac{1-2\alpha}{\alpha c} - \frac{1}{\alpha r} &\text{ for }r \in (c, 1),\end{cases}\]
so for such $c$ we have 
\[\div \tilde z \in \partial \TV(u_{\alpha, c})\text{ and }\frac{u_{\alpha,c}-f}{\alpha}=\div \tilde{z},\]
in which case $u_{\alpha, c}$ indeed solves \eqref{eq:ballROF}. Furthermore, a construction along these lines would also be possible for Lemma \ref{lem:boundconditions} below, for which our proof is based on the same weighted taut string technique.
\end{remark}

For other cases such an explicit computation may be impractical or impossible, but we can still use the taut string characterization to derive some conditions for local boundedness and unboundedness of radial minimizers:
\begin{lemma}\label{lem:boundconditions} Let $f:(0,1) \to \R$ be such that $\restr{f}{(\delta,1)} \in L^2(\delta, 1)$ for all $\delta \in (0,1)$, and such that for some $\delta_0 \in (0,1)$ we have
\[f(r) = \frac{1}{r^\beta} \text{ for all }r \in (0, \delta_0), \text{ so that }f \in L^2_{r^{d-1}}(0,1)\text{ for }2\beta < d,\]
and consider the corresponding minimizer $u$ of \eqref{eq:weightedROF} with $\rho(r)=\phi(r)=r^{d-1}$. Then, if $u$ is unbounded on any neighborhood of $0$, then necessarily $\beta \gs 1$, and if $\beta > 1$, then $u$ is unbounded on any neighborhood of $0$. In particular, for $d=2$ and with $f$ of this form belonging to $L^2_{r^{d-1}}(0,1)$, we have that $u$ is always bounded on a neighborhood of $0$.
\end{lemma}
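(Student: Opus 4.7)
The plan is to combine the taut string characterization of Theorem \ref{thm:weighteddual} with the finite-switching result of Subsection \ref{sec:switching} to reduce the proof to a case analysis on a leftmost subinterval $(0, r_0)$. For $f(r) = r^{-\beta}$ near $0$, the transformed functions are $\check F(t) = (dt)^{(d-\beta)/d}/(d-\beta)$ and $(\rho \circ \Phi^{-1})(t) = (dt)^{(d-1)/d}$, both concave power functions on $t>0$, so their sum is concave near $0$. For the difference $\check F - \alpha(\rho \circ \Phi^{-1})$, the exponent $(d-1)/d$ yields the more singular second derivative when $\beta < 1$ (dominating through $-\alpha (\rho \circ \Phi^{-1})'' > 0$ and producing convexity), while for $\beta > 1$ the $\check F''$ term dominates and yields concavity. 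Either way the convex-or-concave hypothesis of Subsection \ref{sec:switching} applies near $0$, so $u$ has finitely many switches and there is a well-defined leftmost interval $(0, r_0)$ on which one of the three behaviors in \eqref{eq:weightedswitching} holds.

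For $\beta > 1$, the switching structure is not even needed: arguing by contradiction from \eqref{eq:stringoptimality}, if $u$ were bounded by some $M$ on $(0, r_0)$, then
\[|U(r)| \gs \int_0^r s^{d-1-\beta}\, ds - M \int_0^r s^{d-1}\, ds = \frac{r^{d-\beta}}{d-\beta} - \frac{M r^d}{d} \gs \frac{r^{d-\beta}}{2(d-\beta)}\]
for $r$ sufficiently small, while the constraint in \eqref{eq:stringoptimality} forces $|U(r)| \ls \alpha r^{d-1}$. Dividing gives $r^{1-\beta} \ls 2\alpha(d-\beta)$, which is impossible as $r \to 0$ when $\beta > 1$.

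For $\beta < 1$, I would rule out both non-constant behaviors in \eqref{eq:weightedswitching} on the leftmost interval by a sign mismatch. If $u(r) = r^{-\beta} - \alpha(d-1)/r$ on $(0, r_0)$, then $f - u = \alpha(d-1)/r$, direct integration yields $U(r) = -\alpha \rho(r)$, but $u'(r) = -\beta r^{-\beta-1} + \alpha(d-1) r^{-2}$ is strictly positive near $0$ because $\beta < 1$ makes the $r^{-2}$ term dominate, so $Du/|Du| = +1$ on $\supp |Du|$, and the last line of \eqref{eq:stringoptimality} would demand $-\alpha \rho = +\alpha \rho$. Symmetrically, $u(r) = r^{-\beta} + \alpha(d-1)/r$ gives $U(r) = +\alpha \rho(r)$ while $u$ is strictly decreasing near $0$, forcing the impossible $+\alpha \rho = -\alpha \rho$. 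Hence the leftmost interval must be of the constant type, and $u$ is bounded on $(0, r_0)$. The last assertion for $d = 2$ is then immediate since $f \in L^2_r(0,1)$ requires $2\beta < 2$.

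I expect the main subtlety to be the sign-matching in the $\beta < 1$ step: one has to combine the side of the tube on which $U$ lands after integration with the sign of $u'$ dictated by the explicit form, and show that the two cannot agree when $\beta < 1$. The dichotomy between the two regimes is visible in both lines of \eqref{eq:stringoptimality}: the first line yields the direct $|U| \ls \alpha \rho$ estimate used for $\beta > 1$, while the second line encodes the monotonicity requirement exploited for $\beta < 1$.
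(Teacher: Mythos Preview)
Your argument is correct and in fact tidier than the paper's in both regimes, so let me just record the differences.

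For $\beta>1$ the paper does \emph{not} use the tube constraint directly. Instead it fixes the leftmost interval via the switching structure and then runs an energy comparison (inequality \eqref{eq:nastyintegrals}) to show that ``switching to $u'=0$ at any $\nu$'' is strictly disadvantageous, analyzing the asymptotics of both sides as $\nu\to 0$. Your contradiction from $|U(r)|\ls \alpha r^{d-1}$ versus $|U(r)|\gtrsim r^{d-\beta}$ is more elementary: it uses only the first line and the constraint in \eqref{eq:stringoptimality}, and as you note it does not even require the finite-switching lemma. The paper's route has the side benefit of making the $d=2$ case visible through the divergence of $\int_0^\nu r^{d-3}\,dr$, but you recover that case from $2\beta<d$ anyway.

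For $\beta<1$ the paper argues with the maximum principle: $f\gs 0$ forces $u\gs 0$, and $r^{-\beta}-\alpha(d-1)/r<0$ near $0$ when $\beta<1$, so the lower-tube branch is impossible. It never writes down the upper-tube branch $u=f+\alpha(d-1)/r$ explicitly in \eqref{eq:twooptions}; the exclusion of that branch is only justified implicitly (and indeed it is your sign-mismatch observation that does it, since $u=f+\alpha(d-1)/r$ is strictly decreasing while the second line of \eqref{eq:stringoptimality} would force $Du/|Du|=+1$). Your version treats both non-constant branches on equal footing via the monotonicity constraint encoded in $U=\alpha\rho\,\frac{Du}{|Du|}$, which makes the case analysis symmetric and self-contained. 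Both approaches rely on the finite-switching near $0$, which you correctly verify through the convexity/concavity of $\check F\pm\alpha(\rho\circ\Phi^{-1})$.
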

\begin{proof}
First, as above we notice that $f \gs 0$ implies $u \gs 0$, since otherwise truncating $u$ to $u^+ = \max(u, 0)$ would produce a lower energy in \eqref{eq:weightedROF}. Then, the switching behavior \eqref{eq:weightedswitching} implies that there is $\delta \in (0, \delta_0)$ such that either 
\begin{equation}\label{eq:twooptions}u(r) = f(r) - \alpha\frac{d-1}{r},\text{ or }u'(r)= 0\text{ for all }r \in (0,\delta),\end{equation}
and one of the two choices must hold for the whole interval $(0,\delta)$ by the results cited in Section \ref{sec:switching}, since once again the behavior of $u$ can change only finitely many times away from $1$. To see this, notice that in this case
\[\begin{aligned}F(s)&=\int_0^s f(r) \phi(r) \dd r = \frac{s^{d-\beta}}{d-\beta} &&\text{for } s < \delta_0,\text{ and}\\\quad\check{F}(t) &= (F \circ \Phi^{-1})(t) = \frac{(dt)^{(d-\beta)/d}}{d-\beta} &&\text{for } t < (d\delta_0)^{1/d},\end{aligned}\]
so that $\check{F} + \alpha \big(\rho \circ \Phi^{-1}\big)$ is concave and $\check{F} - \alpha \big(\rho \circ \Phi^{-1}\big)$ is strictly concave around zero for $\beta > 1$ but convex for $\beta < 1$.

Now, we notice that if $\beta < 1$ we must have the second case of \eqref{eq:twooptions} because $u \gs 0$ would be violated in the first case, which proves the first part of the statement. Let us now assume that $\beta > 1$. Comparing in the energy \eqref{eq:weightedROF}  the first case of \eqref{eq:twooptions} holds exactly when for all $0<\nu \ls \delta$ we simultaneously have 
\begin{equation}\label{eq:nastyintegrals}\int_0^\nu \left[\left|f'(r)+\alpha\frac{d-1}{r^2}\right| + \alpha^2\left(\frac{d-1}{r}\right)^2 \right] r^{d-1} \dd r < \int_0^\nu \left(f(\nu) - \alpha\frac{d-1}{\nu} - f(r)\right)^2 r^{d-1} \dd r,\end{equation}
which encodes that switching to the second case $u'=0$ should not be advantageous at any $\nu \ls \delta$, and where equality is not possible since it would contradict uniqueness of minimizers of \eqref{eq:weightedROF}. Moreover, we also notice that the last term of the left hand integral can only be finite if $d \gs 3$, proving boundedness in case $d=2$. Using the specific form of $f(r)=r^{-\beta}$, inequality \eqref{eq:nastyintegrals} becomes
\begin{equation}\label{eq:nastyintegralsbeta}\int_0^\nu \left[\left|-\frac{\beta}{r^{\beta+1}}+\alpha\frac{d-1}{r^2}\right| + \alpha^2\left(\frac{d-1}{r}\right)^2 \right] r^{d-1} \dd r < \int_0^\nu \left(\frac{1}{\nu^\beta} - \alpha\frac{d-1}{\nu} - \frac{1}{r^\beta}\right)^2 r^{d-1} \dd r.\end{equation}
Using the notation $I(\nu) < J(\nu)$ in \eqref{eq:nastyintegralsbeta}, since the inequality is either true or false for every $\nu < \delta$ simultaneously, it holds if we had that
\[\ell := \lim_{\nu \to 0} \frac{I(\nu)}{J(\nu)} < 1.\]
But for $\nu$ small enough we have that $I(\nu)$ is dominated (up to a constant factor) by $\nu^{d-\beta-1}$ and $\nu^{d-2\beta}$ is dominated (also up to a factor) by $J(\nu)$, which, since $\beta >1$, implies $\ell = 0$.
\end{proof}
This lemma can be used to derive a boundedness criterion for more general radial data, without requiring local convexity or concavity assumptions for the functions appearing in the constraints of \eqref{eq:weightedstring4}:
\begin{prop}\label{prop:generalradial}
Assume that the radial data $f$ satisfies $f \gs 0$ and $f \in L^2_{r^{d-1}}(0,1)$. Then, if
\begin{equation}\label{eq:fastexplosion}\lim_{r \to 0} r^{\beta} f(r) = +\infty \text{ for some }\beta > 1,\end{equation}
then the corresponding minimizer $u$ of \eqref{eq:weightedROF} with $\rho(r)=\phi(r)=r^{d-1}$ is unbounded on any neighborhood of 0, but bounded on any compact interval excluding zero. Conversely, if 
\begin{equation}\label{eq:slowexplosion}\lim_{r \to 0} r^{\beta} f(r) = 0 \text{ for some }\beta < 1\end{equation}
then $u$ is bounded.
\end{prop}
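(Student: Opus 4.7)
The plan is to reduce Proposition \ref{prop:generalradial} to Lemma \ref{lem:boundconditions}, which handles pure-power data $f(r) = r^{-\beta}$, via a comparison principle for the weighted one-dimensional ROF problem \eqref{eq:weightedROF}. Boundedness on compact subintervals excluding zero is a separate and easier consequence of one-dimensional $\BV$ theory.

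The first step is to establish a comparison principle: if $f_1, f_2 \in L^2_\phi(0,1)$ with $f_1 \ls f_2$ a.e., then the corresponding unique minimizers $u_{f_1}, u_{f_2}$ of \eqref{eq:weightedROF} satisfy $u_{f_1} \ls u_{f_2}$ a.e. The argument proceeds along the classical lines: setting $\hat u_1 := u_{f_1} \wedge u_{f_2}$ and $\hat u_2 := u_{f_1} \vee u_{f_2}$, one checks the pointwise inequality
\[(\hat u_1 - f_1)^2 + (\hat u_2 - f_2)^2 \ls (u_{f_1} - f_1)^2 + (u_{f_2} - f_2)^2,\]
valid precisely because $f_1 \ls f_2$, and combines it with the submodularity $\TV_\rho(\hat u_1) + \TV_\rho(\hat u_2) \ls \TV_\rho(u_{f_1}) + \TV_\rho(u_{f_2})$, a consequence of the analogous submodularity of the $\rho$-weighted perimeter via a coarea formula. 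Optimality of $u_{f_1}, u_{f_2}$ then forces equality throughout, and uniqueness of the minimizer (from strict convexity of the fidelity) yields $u_{f_1} = \hat u_1 \ls u_{f_2}$. Applied with $f_1 = 0$, this also gives $u \gs 0$ whenever $f \gs 0$, so we need only control $u$ from above.

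For the unboundedness claim under \eqref{eq:fastexplosion}, I would pick $\beta' \in \bigl(1, \min(\beta, d/2)\bigr)$, which is possible for $d \gs 3$ (in $d = 2$ no such $\beta'$ exists, consistent with Lemma \ref{lem:boundconditions} ruling out unbounded radial minimizers there). Since $r^{\beta'} f(r) = r^{\beta' - \beta} \cdot r^\beta f(r) \to +\infty$ as $r \to 0$, there is $r_0 \in (0,1)$ with $f(r) \gs r^{-\beta'}$ on $(0, r_0)$, and I define the minorant $g(r) := r^{-\beta'} \mathbf{1}_{(0, r_0)}(r)$. Then $g \in L^2_{r^{d-1}}(0,1)$ since $2\beta' < d$, we have $g \ls f$, and $g$ meets the hypotheses of Lemma \ref{lem:boundconditions} with exponent $\beta' > 1$; the lemma produces a minimizer $u_g$ unbounded on every neighborhood of zero, and the comparison principle $u_g \ls u$ transfers this unboundedness to $u$. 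Symmetrically, under \eqref{eq:slowexplosion} with $\beta < 1$, I choose $r_0$ such that $f(r) \ls r^{-\beta}$ on $(0, r_0)$ and set $g(r) := r^{-\beta}$ for $r \in (0, r_0)$ and $g(r) := \max\bigl(f(r), r_0^{-\beta}\bigr)$ for $r \in [r_0, 1)$, so that $g \gs f$ globally, $g \in L^2_{r^{d-1}}$, and $g$ meets the hypotheses of Lemma \ref{lem:boundconditions} with exponent $\beta < 1$. The lemma gives $u_g$ bounded near zero, and comparison together with $u \gs 0$ yields the required local bound on $u$.

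Finally, the statement of boundedness on compact subintervals $[a, b] \subset (0, 1)$ follows directly from finiteness of $\TV_\rho(u)$: because $\rho(r) = r^{d-1} \gs a^{d-1} > 0$ on $[a, b]$, we obtain $|Du|([a, b]) \ls a^{-(d-1)} \TV_\rho(u) < \infty$, hence $u \in \BV([a, b]) \subset L^\infty([a, b])$ by the standard one-dimensional embedding. The main delicate point in this plan is the comparison principle in the weighted and possibly degenerate setting: one must verify that the truncations remain in the admissible class $\BV_\rho(0,1) \cap L^2_\phi(0,1)$ and that the weighted coarea identity provides the submodularity of $\TV_\rho$ in sufficient generality. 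Both should follow from the level-set characterization in \eqref{eq:TVrhosubgrad} and the approximation results already invoked in the proof of Theorem \ref{thm:weighteddual}, but they are worth spelling out carefully.
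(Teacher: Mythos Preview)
Your strategy coincides with the paper's: reduce to Lemma~\ref{lem:boundconditions} via a pointwise comparison principle, and handle compact subintervals away from zero by the one-dimensional embedding $\BV \subset L^\infty$. The implementation differs in one point worth noting. You establish comparison directly for the weighted one-dimensional problem through submodularity of $\TV_\rho$, which is correct but, as you flag, requires the weighted coarea identity and some care about the admissible class. The paper instead invokes the equivalence with the radial problem \eqref{eq:ballROF} on $B(0,1)\subset\R^d$ and cites the standard comparison principle for unweighted $\TV$ denoising (e.g.\ \cite[Prop.~3.6]{IglMer21}), which sidesteps the weighted submodularity issue entirely. Your route is more self-contained; the paper's is shorter. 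A small point where your write-up is actually more careful: in the fast-explosion case you pass to $\beta'\in(1,\min(\beta,d/2))$ to guarantee the comparison datum lies in $L^2_{r^{d-1}}$, a reduction the paper leaves implicit. Your choice of comparison functions (zero tail in the unbounded case, $\max(f,r_0^{-\beta})$ in the bounded case) differs cosmetically from the paper's $\tilde f$, which simply glues $r^{-\beta}$ to $f$ at $\delta_0$; both work.
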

\begin{proof}
First, we notice that we must have
\begin{equation}\label{eq:bddawayfromzero} u \in L^\infty(\delta, 1) \text{ for all } \delta > 0,\end{equation}
because on the interval $(\delta, 1)$ we have $\rho \geq \delta^{d-1}$, which ensures that the restriction of $u$ to it belongs to $\BV(\delta, 1)$, and in one dimension we have the embedding $\BV(\delta, 1) \subset L^\infty(\delta,1)$. Given \eqref{eq:bddawayfromzero} and using the equivalence with \eqref{eq:ballROF}, we can conclude by applying the pointwise comparison principle for minimizers of $\TV$ denoising in $B(0,1) \subset \R^d$ (a proof of which can be found in for example \cite[Prop.~3.6]{IglMer21}) against inputs of the form treated in Lemma \ref{lem:boundconditions} above. Assuming \eqref{eq:fastexplosion}, we choose $\delta_0$ such that $f(r) \geq r^{-\beta}$ for $r \in (0, \delta_0)$, while for the case \eqref{eq:slowexplosion} we take $\delta_0$ such that $f(r) \leq r^{-\beta}$ for $r \in (0, \delta_0)$, and in both cases compare with $\tilde{f}$ defined by
\[\tilde{f}(r) = \begin{cases}\frac{1}{r^\beta} &\text{ if }r \in (0, \delta_0) \\ f(r) &\text{ if }r \in [\delta_0,1).\end{cases}\qedhere\]
\end{proof}
We can also see this criterion in light of the results of Section \ref{sec:boundedness}, in particular Propositions \ref{prop:single_u_bounded} and \ref{prop:bounded_domains}. There, the main requirement for boundedness of $u$ is that there should be $v \in \partial_{L^{d/(d-1)}} \TV(u) \subset L^d(\Omega)$. In the case of radial data on $\Omega = B(0,1) \subset \R^d$, we have worked with the ROF denoising problem for which
\[v_\alpha := -\frac{1}{\alpha}(u_\alpha-f) \in \partial_{L^2} \TV(u_\alpha),\]
and by \eqref{eq:weightedswitching} we know that $u_\alpha-f$ switches between $\pm \alpha (d-1)/|\cdot|$ and zero on annuli. Moreover, the power growth $1/|\cdot|$ is precisely the threshold for a function on $\R^d$ to belong to $L^d(B(0,1))$, assuming it is in $L^\infty \big(B(0,1)\setminus B(0,\delta)\big)$ for all $\delta$. That is, for slower power growth as in \eqref{eq:slowexplosion} we have $v_{\alpha} \in L^d(B(0,1))$ and the minimizer is bounded, which we could have proved using the techniques of Section 3. In contrast, for faster power growth as in \eqref{eq:fastexplosion} we have that $v_{\alpha} \notin L^d(B(0,1))$, so the methods of Section \ref{sec:boundedness} are not applicable, and by Proposition \ref{prop:generalradial} in this case $\TV$ denoising produces an unbounded minimizer.

\section{Higher-order regularization terms}\label{sec:higherorder}
Here we treat problems regularized with two popular approaches combining derivatives of first and higher orders, with the goal of extending our boundedness results from Section \ref{sec:boundedness} to these more involved settings. In this section, we limit ourselves to the case of functions defined on a bounded Lipschitz domain $\Omega \subset \R^d$.

The first approach, for which we are able to prove an analog of Theorem \ref{thm:joint_thm} in dimension $2$, is infimal convolution of first and second order total variation first introduced in \cite{ChaLio97} (see also \cite{BreHol20}), that is
\begin{equation}\min_{ u \in L^{d/(d-1)}(\Omega)} \frac{1}{\sigma}\left( \int_\Sigma |Au - (f+w)|^q \right)^{\sigma/q} + \left[\,\inf_{g \in \BV^2(\Omega)}\alpha_1 \TV(u-g) + \alpha_2 \TV^2(g)\right]\label{eq:primal-ictv},\end{equation}
where $\TV^2(g)$ for $g \in \BV^2(\Omega) \subset W^{1,1}(\Omega)$ is the norm of the distributional Hessian as a matrix-valued Radon measure, that is
\begin{equation}\label{eq:TV2def}\TV^2(g) := \sup\left\{ \int_\Omega \nabla g(x) \cdot \div M(x) \dd x \, \middle\vert\, M \in C^1_c(\Omega; \R^{d \times d}) \text{ with } |M(x)|_F \ls 1 \text{ for all } x\right\},\end{equation}
with $|M|_F$ denoting the Frobenius norm, that is $|M|^2_F=\sum_{ij} (M_{ij})^2$. Here and in the rest of this section, $A$ and the exponents $\sigma,q$ are as in Sections \ref{sec:intro} and \ref{sec:boundedness}, i.e. $A: L^{d/(d-1)}(\Omega) \to L^q(\Sigma)$ is a bounded linear operator, and $\sigma = \min(q,2)$.

The second approach, widely used in applications but harder to treat analytically, is total generalized variation of second order, introduced in \cite{BreKunPoc10} and for which we use the characterization of \cite[Thm.~3.1]{BreVal11} to write
\begin{equation}\min_{ u \in L^{d/(d-1)}(\Omega)} \frac{1}{\sigma}\left( \int_\Sigma |Au - (f+w)|^q \right)^{\sigma/q} + \left[\inf_{z \in \BD(\Omega)}\alpha_1 |Du - z|(\Omega) + \alpha_2 \TD(z)\right]\label{eq:primal-tgv},\end{equation}
for which we denote the second term as $\TGV(u)$, not making the regularization parameters $\alpha_1, \alpha_2$ explicit in the notation. Here, $\TD(z)$ denotes the `total deformation' of the vector field $z$ defined in terms of its distributional symmetric gradient, that is 
\begin{equation}\label{eq:TDdef}\TD(z) := \sup\left\{ \int_\Omega z(x) \cdot \div M(x) \dd x \, \middle\vert\, M \in C^1_c(\Omega; \R_{\text{sym}}^{d \times d}) \text{ with } |M(x)|_F \ls 1 \text{ for all } x\right\}.\end{equation}

Since we are working on a bounded Lipschitz domain, the inner infima are attained in both cases and the denoising functionals \eqref{eq:primal-ictv} and \eqref{eq:primal-tgv} have unique minimizers. For a proof, see \cite[Prop.~4.8, Prop.~4.10]{BreHol20} for \eqref{eq:primal-ictv} and \cite[Thm.~3.1, Thm.~4.2]{BreVal11}\cite{BreHol14}\cite[Thm.~5.9, Prop.~5.17]{BreHol20} for the TGV case \eqref{eq:primal-tgv}.
\begin{lemma}\label{lem:higherorderoptimality}Assuming that $u$ is a minimizer of \eqref{eq:primal-ictv} and the inner infimum is attained by some $g_u$, we have the necessary and sufficient optimality condition:
\begin{equation}\begin{gathered}v \in \alpha_1 \partial\TV(u-g_u) \cap \alpha_2 \partial\TV^2(g_u), \text{ for}\\
v:=\|Au-f-w\|_{L^q(\Sigma)}^{\sigma-2} \,A^\ast\left( |f+w-Au|^{q-2}(f+w-Au) \right)
\label{eq:opt-ictv}\end{gathered}\end{equation}
If \eqref{eq:primal-tgv} is minimized by $u$ and the inner infimum is attained for some $z_u$, we also have the necessary condition
\begin{equation}v \in \alpha_1 \partial J_{z_u}(u)\label{eq:opt-tgv},\end{equation}
where the functional $J_{z_u}$ is defined by $J_{z_u}(\tilde{u}) = |D\tilde{u} - z_u|(\Omega)$, and $v$ is as in \eqref{eq:opt-ictv}.
\end{lemma}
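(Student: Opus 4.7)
The approach is to view both regularization problems as jointly convex minimizations over pairs of variables (ICTV over $(u,g)$ and TGV over $(u,z)$) and to apply Fermat's rule componentwise at the minimizer.

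For the ICTV case, at the optimal pair $(u, g_u)$, variation of the joint functional in the first argument yields
\[
0 \in \nabla\Phi(u) + \alpha_1 \partial\TV(u-g_u),
\]
where $\Phi(u) := \frac{1}{\sigma}\|Au-f-w\|_{L^q(\Sigma)}^{\sigma}$. A direct chain-rule computation of the Fr\'echet derivative of $\Phi$ identifies $-\nabla\Phi(u)$ with the quantity $v$ defined in \eqref{eq:opt-ictv}, so $v \in \alpha_1 \partial\TV(u-g_u)$. Since $g \mapsto \TV(u-g)$ has subdifferential $-\partial \TV(u-g_u)$ at $g_u$, variation in the second argument gives
\[
0 \in -\alpha_1 \partial\TV(u-g_u) + \alpha_2 \partial\TV^2(g_u),
\]
which forces the same $v$ to lie in $\alpha_2 \partial\TV^2(g_u)$, yielding the necessary intersection condition \eqref{eq:opt-ictv}. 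For sufficiency, given $v$ in both subdifferentials and any competitor $(\tilde u, \tilde g)$, one adds the subgradient inequality for $\alpha_1\TV(\cdot-g_u)$ at $u-g_u$ tested against $\tilde u - \tilde g$, the one for $\alpha_2 \TV^2$ at $g_u$ tested against $\tilde g$, and the convexity inequality for $\Phi$ at $u$ with competitor $\tilde u$; all $v$-contributions then cancel and one concludes that $(u, g_u)$ minimizes the joint functional, hence $u$ minimizes \eqref{eq:primal-ictv}.

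For the TGV case only the $u$-variation is claimed, and we apply the same componentwise reasoning at the optimal pair $(u, z_u)$ of the joint problem over $(u,z)$. Observing that $J_{z_u}$ admits a supremum characterization analogous to \eqref{eq:TVdef}, namely
\[
J_{z_u}(\tilde u) = \sup\left\{ -\!\int_\Omega \tilde u \div\psi - \!\int_\Omega z_u \cdot \psi \,\,\middle\vert\,\, \psi \in C^1_c(\Omega;\R^d),\ |\psi|\ls 1\right\},
\]
it is convex, lower semicontinuous, and (up to an additive constant) positively one-homogeneous, so Fermat's rule in the first variable immediately yields \eqref{eq:opt-tgv}.

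The main technical obstacle is to justify the subdifferential sum rule for the joint problem with compatible duality pairings between the primal variables and the ambient space in which $v$ lies. The relevant pairing with $v \in L^d(\Omega)$ is delivered by $\BV^2(\Omega) \subset W^{1,1}(\Omega) \subset L^{d/(d-1)}(\Omega)$ in the ICTV case and by $\BD(\Omega) \subset L^{d/(d-1)}(\Omega;\R^d)$ in the TGV case, so the subdifferentials $\partial \TV^2$ and $\partial J_{z_u}$ are well defined in the right sense. The attainment of the inner infima (established for both problems prior to the lemma) ensures that the qualification condition needed for the sum rule holds at the relevant point, after which the routine chain-rule identifications complete the argument.
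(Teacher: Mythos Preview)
Your strategy of viewing the problem as a joint minimization over pairs and applying Fermat's rule is sound and leads to a valid proof, but the argument as written has a gap at the step ``which forces the same $v$ to lie in $\alpha_2 \partial\TV^2(g_u)$''. From the marginal optimality in $g$ alone you only obtain $0 \in -\alpha_1\partial\TV(u-g_u) + \alpha_2\partial\TV^2(g_u)$, i.e.\ that the two subdifferentials intersect; this does \emph{not} say that the specific element $v=-\nabla\Phi(u)$ obtained from the $u$-variation lies in the intersection. What you need is the joint Fermat rule $0\in\partial F(u,g_u)$ in the product space together with the sum rule $\partial(F_2+F_3)=\partial F_2+\partial F_3$ there, so that the \emph{same} pair $(p,-p)\in\alpha_1\partial\TV(u-g_u)\times(-\alpha_1\partial\TV(u-g_u))$ appears in both coordinate equations; then the first coordinate pins down $p=v/\alpha_1$ and the second gives $v\in\alpha_2\partial\TV^2(g_u)$. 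Alternatively, once $v\in\alpha_1\partial\TV(u-g_u)$ is known, one can combine its subgradient inequality (tested with $h=g_u-\tilde g$) with the optimality of $g_u$ in the inner infimum to get $\alpha_2\TV^2(\tilde g)-\alpha_2\TV^2(g_u)\geq\langle v,\tilde g-g_u\rangle$ directly, bypassing any sum rule for the pair $(\TV,\TV^2)$.

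A second issue is the last paragraph: attainment of the inner infimum does \emph{not} provide the qualification condition for the subdifferential sum rule; that condition concerns the relative interiors of the domains (e.g.\ continuity of one summand at a point of the other's domain), not whether minima are achieved. The paper's proof sidesteps both difficulties by never invoking a sum rule between the two regularizers: it first uses continuity of the data term to get $v\in\partial\big[(\TV\square\gamma\TV^2)\big](u)$, and then shows by an elementary chain of inequalities (exploiting only that $g_u$ attains the inner infimum) that any such $v$ automatically lies in both $\partial\TV(u-g_u)$ and $\partial\TV^2(g_u)$. This is essentially the second fix above and is what the reference to \cite[Cor.~2.4.7]{Zal02} encodes; your sufficiency argument, on the other hand, is correct and coincides with the paper's.
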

\begin{proof}
We assume for the sake of simplicity that $\alpha_1=\alpha_2 = 1$, and first prove \eqref{eq:opt-tgv}. We notice that the first term in \eqref{eq:primal-tgv} is continuous with respect to $u \in L^{d/(d-1)}(\Omega)$, so we have \cite[Prop.~I.5.6]{EkeTem99} for an optimal $u$ that 
\begin{equation}v \in \partial \left[\inf_{z \in \BD(\Omega)} |D\cdot - z|(\Omega) + \TD(z)\right](u)\label{eq:preopt-tgv},\end{equation}
and similarly for \eqref{eq:primal-ictv}. Now, one would like to use the characterization of the subgradient of exact infimal convolutions as intersections of subgradients at the minimizing pair (see \cite[Cor.~2.4.7]{Zal02}, for example). Since the complete result cannot be applied directly to \eqref{eq:preopt-tgv}, we make explicit the parts of its proof that are applicable. To this end, let $u^\ast \in L^d(\Omega)$ belong to the right hand side of \eqref{eq:preopt-tgv}. We have for all $\tilde{u} \in L^{d/(d-1)}(\Omega)$, noticing that $\TGV(u)<+\infty$ implies $u \in \BV(\Omega)$ since we may just consider $z=0$ in the inner minimization, that
\begin{align*}&|D(u+\tilde{u}) - z_u|(\Omega)- |Du - z_u|(\Omega) -\scal{u^\ast}{\tilde{u}}_{L^d \times L^{d/(d-1)}} \\
&\quad =|D(u+\tilde{u}) - z_u|(\Omega) + \TD(z_u) - |Du - z_u|(\Omega) - \TD(z_u) -\scal{u^\ast}{\tilde{u}}_{L^d \times L^{d/(d-1)}} \\
&\quad =|D(u+\tilde{u}) - z_u|(\Omega) + \TD(z_u) - \left[\inf_{z \in \BD(\Omega)} |Du - z|(\Omega) + \TD(z)\right] -\scal{u^\ast}{\tilde{u}}_{L^d \times L^{d/(d-1)}} \\
&\quad \gs \left[\inf_{z \in \BD(\Omega)} |D(u+\tilde{u}) - z|(\Omega) + \TD(z)\right]\! - \!\left[\inf_{z \in \BD(\Omega)} |Du - z|(\Omega) + \TD(z)\right] \! - \! \scal{u^\ast}{\tilde{u}}_{L^d \times L^{d/(d-1)}} \\&\quad\gs 0,
\end{align*}
which combined with \eqref{eq:preopt-tgv} implies \eqref{eq:opt-tgv}. To prove that \eqref{eq:opt-ictv} is necessary for optimality, assuming that $u$ now minimizes \eqref{eq:primal-ictv} we have analogously
\begin{equation}v \in \partial \left[\inf_{\,g \in L^2(\Omega)} \TV(\cdot-g) + \TV^2(g)\right](u)\label{eq:preopt-ictv},\end{equation} 
and the analogous computation leads to $v \in \partial\TV(u-g_u)$. To see that we also have $v \in \partial\TV^2(g_u)$, we consider $u^\ast \in L^d(\Omega)$ in the right hand side of \eqref{eq:preopt-ictv} so that for $\tilde u \in L^{d/(d-1)}(\Omega)$,
\begin{align*}&\TV^2(g_u+\tilde{u}) - \TV^2(g_u) - \scal{u^\ast}{\tilde{u}}_{L^2}\\
&\quad=\TV^2(g_u+\tilde{u}) + \TV(u-g_u) - \TV^2(g_u) - \TV(u-g_u) -\scal{u^\ast}{\tilde{u}}_{L^d \times L^{d/(d-1)}}\\
&\quad=\TV^2(g_u+\tilde{u}) + \TV(u-g_u) - \left[\,\inf_{g \in L^{d/(d-1)}(\Omega)} \TV(u-g) + \TV^2(g) \right] -\scal{u^\ast}{\tilde{u}}_{L^d \times L^{d/(d-1)}} \\
&\quad\gs \left[\,\inf_{g \in L^{d/(d-1)}(\Omega)} \TV(u+\tilde{u}-g) + \TV^2(g)\right] \\
&\quad\quad\quad\quad - \left[\,\inf_{g \in L^{d/(d-1)}(\Omega)} \TV(u-g) + \TV^2(g) \right] -\scal{u^\ast}{\tilde{u}}_{L^d \times L^{d/(d-1)}} \gs 0.
\end{align*}
Notice that this last part is not a priori possible for \eqref{eq:primal-tgv} due to the lack of symmetry in the variables involved. That \eqref{eq:opt-ictv} is also sufficient can be seen from \cite[Cor.~2.4.7]{Zal02}, since in this case we do have an infimal convolution.
\end{proof}

\begin{prop}\label{prop:ictv}Whenever $d=2$, minimizers of \eqref{eq:primal-ictv} are in $L^\infty(\Omega)$. For $d>2$ they belong to $L^{d/(d-2)}(\Omega)$.
\end{prop}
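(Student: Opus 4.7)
The plan is to unpack Lemma \ref{lem:higherorderoptimality} into its two inclusions, use one to handle $u - g_u$ and the other to handle $g_u$, and then combine via $u = (u - g_u) + g_u$. Here the element $v$ provided by \eqref{eq:opt-ictv} lies in $L^d(\Omega)$ and simultaneously in both $\alpha_1 \partial \TV(u - g_u)$ and $\alpha_2 \partial \TV^2(g_u)$.

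The difference $u - g_u$ is then taken care of directly by Proposition \ref{prop:bounded_domains}: the inclusion $v/\alpha_1 \in \partial \TV(u - g_u) \cap L^d(\Omega)$ is exactly its hypothesis, so $u - g_u \in L^\infty(\Omega)$ in every dimension $d \gs 2$.

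For $g_u$ when $d > 2$, no use of the $\TV^2$ inclusion is necessary: $g_u \in \BV^2(\Omega)$ means $\nabla g_u \in \BV(\Omega; \R^d) \hookrightarrow L^{d/(d-1)}(\Omega; \R^d)$, so $g_u \in W^{1, d/(d-1)}(\Omega) \hookrightarrow L^{d/(d-2)}(\Omega)$ on the bounded Lipschitz domain $\Omega$. Combined with $u - g_u \in L^\infty(\Omega)$, this gives $u \in L^{d/(d-2)}(\Omega)$ as claimed.

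The main obstacle is the two-dimensional case. Here the soft embedding of $\BV^2$ only reaches $L^p(\Omega)$ for every finite $p$ and misses $L^\infty$, so the inclusion $v/\alpha_2 \in \partial \TV^2(g_u)$ must be used in earnest. The plan is, by analogy with the characterization \eqref{eq:TVsubgrad2} of $\partial \TV(0)$, to extract from it a matrix field $M \in L^\infty(\Omega; \R^{2\times 2})$ with $|M|_F \ls 1$ a.e., vanishing normal traces in an appropriate second-order sense, $v/\alpha_2 = \div(\div M)$ distributionally, and a second-order Anzellotti-type pairing identity $(M, D^2 g_u)(\Omega) = \TV^2(g_u)$. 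Setting $w = -\div M$ makes $w$ play the role of a $\TV$-subgradient of the vector field $\nabla g_u$ with $\div w = v/\alpha_2 \in L^2(\Omega)$, and the goal is to run a vector-valued variant of the level-set argument of Proposition \ref{prop:single_u_bounded} on $\nabla g_u$ to deduce $\nabla g_u \in L^\infty(\Omega;\R^2)$ and hence $g_u \in W^{1,\infty}(\Omega) \subset L^\infty(\Omega)$. The hard step is that $w$ need not itself belong to $L^2(\Omega; \R^2)$, so Proposition \ref{prop:single_u_bounded} cannot be quoted off the shelf; this likely forces one to work with the scalar BV function $|\nabla g_u|$, using the pointwise coarea-type bound $|D|\nabla g_u|| \ls |D^2 g_u|_F$ together with the pairing identity above to translate the $L^2$-summability of $v = \div w$ into a perimeter-level-set estimate on $\{|\nabla g_u| > s\}$ that matches the isoperimetric inequality.
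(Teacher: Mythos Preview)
Your handling of $u - g_u$ and of the case $d > 2$ matches the paper's proof exactly.

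The gap is in your treatment of $d = 2$. You assert that ``the soft embedding of $\BV^2$ only reaches $L^p(\Omega)$ for every finite $p$ and misses $L^\infty$'', and on that basis embark on an elaborate (and, by your own admission, incomplete) level-set argument for $|\nabla g_u|$. The premise is false: on a bounded Lipschitz domain $\Omega \subset \R^2$ the critical embedding
\[
\BV^2(\Omega) \hookrightarrow C(\overline{\Omega}) \subset L^\infty(\Omega)
\]
\emph{does} hold. This is a theorem of Demengel (\emph{Fonctions \`a hessien born\'e}, 1984) for $C^2$ domains, extended to Lipschitz domains by Savar\'e; it is the measure-Hessian analogue of the borderline Sobolev embedding $W^{2,1}(\R^2) \hookrightarrow C_0(\R^2)$, which holds precisely because the $p=1$ endpoint is special. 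Your chain $\BV^2 \to W^{1,2} \to L^p$ loses information at the first arrow and is not the sharp embedding. The paper simply invokes Demengel's result: since $g_u \in \BV^2(\Omega)$ one has $g_u \in L^\infty(\Omega)$ directly, and together with $u - g_u \in L^\infty(\Omega)$ the $d=2$ case follows in one line. The inclusion $v/\alpha_2 \in \partial \TV^2(g_u)$ is never used.

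Your proposed workaround --- extracting a matrix field $M$ from $\partial \TV^2(g_u)$, setting $w = -\div M$, and running a vector-valued level-set argument on $\nabla g_u$ or on $|\nabla g_u|$ --- is therefore unnecessary, and it is also genuinely problematic at exactly the point you flag: $w$ need only satisfy $\div w \in L^2(\Omega)$, not $w \in L^2(\Omega;\R^2)$, so the perimeter--H\"older--isoperimetric chain of Proposition \ref{prop:single_u_bounded} does not transfer. There is no evident way to salvage this without essentially reproving the Demengel embedding by other means.
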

\begin{proof}
Using \eqref{eq:opt-ictv} we notice as in Remark \ref{rem:nonemptysubg} that the proof of Proposition \ref{prop:single_u_bounded} implies that $u-g_u \in L^\infty(\Omega)$. Now since $\TV^2(g_u) < +\infty$, for $d>2$ one gets $g_u \in L^{d/(d-2)}(\Omega)$ so that also $u\in L^{d/(d-2)}(\Omega)$, and in fact for $d=2$ the critical embedding $\BV^2(\Omega) \subset L^\infty(\Omega)$ holds, see \cite[Thm.~3.2]{Dem84} for domains with $C^2$ boundary and \cite[Rem.~0.3]{Sav96} for Lipschitz domains. Therefore, in that case $u \in L^\infty(\Omega)$.
\end{proof}

More generally one can prove the following restricted analogue of Theorem \ref{thm:joint_thm}:
\begin{theorem}\label{thm:joint_thm_ictv}
Let $d=2$ and $\Omega$ be a bounded Lipschitz domain. Assume that for $A:L^2(\Omega)\to L^q(\Sigma)$ linear bounded, $f \in L^q(\Sigma)$ and a fixed $\gamma >0$ there is a unique solution $u^\dag$ for $Au=f$ which satisfies the source condition $\operatorname{Ran}(A^\ast) \cap \partial (\TV \square \, \gamma\! \TV^2)(u^\dag) \neq \emptyset$, where we denote
\begin{equation}\label{eq:ictv-again}(\TV \square \, \gamma\! \TV^2)(u):=\inf_{g \in \BV^2(\Omega)} \TV(u-g) + \gamma \TV^2(g).\end{equation}
In this case, there is some constant $C(\gamma, q,\sigma, \Omega)$ such that if $\alpha_n, w_n$ are sequences of regularization parameters and perturbations for which
\[\alpha_n \gs C(\gamma, q,\sigma, \Omega) \|A^\ast\| \Vert w_n \Vert_{L^q(\Sigma)}^{\sigma-1} \xrightarrow[n \to \infty]{} 0\]
then the corresponding sequence $u_n$ of solutions of 
\begin{equation}\label{eq:primalproblem-ictv}\min_{ u \in L^2(\Omega)} \frac{1}{\sigma}\left( \int_\Sigma |Au - (f+w_n)|^q \right)^{\sigma/q}+ \alpha_n (\TV \square \, \gamma\! \TV^2)(u)\end{equation}
is bounded in $L^\infty(\Omega)$, and (possibly up to a subsequence)
\[u_n \xrightarrow[n \to \infty]{} u^\dag\text{ strongly in }L^{\overline p}(\Omega)\text{ for all }\overline p \in (1, \infty).\]
\end{theorem}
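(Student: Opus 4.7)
The plan is to mirror the strategy of Theorem~\ref{thm:joint_thm}: use the optimality condition from Lemma~\ref{lem:higherorderoptimality} to isolate a component of $u_n$ that carries a $\TV$ subgradient uniformly bounded in $L^2(\Omega)$, apply the level-set/isoperimetric argument of Proposition~\ref{prop:bounded_domains} to that component, and then handle the remainder via the critical embedding $\BV^2(\Omega)\hookrightarrow L^\infty(\Omega)$ available for $d=2$.

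First I would set up the Fenchel dual of \eqref{eq:primalproblem-ictv}. Since $R := \TV \,\square\, \gamma\TV^2$ is convex, proper, and positively one-homogeneous, one has $(\TV\,\square\,\gamma\TV^2)^\ast = (\TV)^\ast + (\gamma\TV^2)^\ast$, and hence $\partial R(0) = \partial\TV(0)\cap\gamma\partial\TV^2(0)$, so the dual of \eqref{eq:primalproblem-ictv} has the structure of \eqref{eq:dualalphaw} with $\partial\TV(0)$ replaced by $\partial R(0)$. The strong concavity of the dual objective in $p$ is unchanged, so the stability estimate \eqref{eq:stability} and the arguments of Section~\ref{sec:dualstuff} transfer directly. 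The source condition $A^\ast p_0 \in \partial R(u^\dag)$ together with the parameter choice then yield, as in \eqref{eq:noisyconv}, strong convergence $v_n := A^\ast p_n \to v_0 := A^\ast p_0$ in $L^d(\Omega) = L^2(\Omega)$, which in particular makes $\{v_n\}_n$ equi-integrable.

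By Lemma~\ref{lem:higherorderoptimality}, the same $v_n$ satisfies $v_n \in \partial\TV(u_n-g_{u_n})$ for a minimizer $g_{u_n}$ of the inner infimum. I would apply the level-set argument of Proposition~\ref{prop:bounded_domains} to the sets $\{\sign(s)(u_n-g_{u_n}) > |s|\}$: equi-integrability of $\{v_n\}_n$, the relative isoperimetric inequality \eqref{eq:relativeisop}, and choosing the constant $C(\gamma,q,\sigma,\Omega)$ large enough to absorb this geometric factor give a uniform bound $\|u_n-g_{u_n}\|_{L^\infty(\Omega)}\ls C_1$. Separately, a standard source-condition comparison in energy yields $R(u_n)\to R(u^\dag)$, so $\TV^2(g_{u_n})$ and $\TV(u_n-g_{u_n})$ are uniformly bounded.

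For the remainder, I would normalize $\int_\Omega g_{u_n}=0$ (admissible because the inner infimum is invariant under additive constants), use the $L^\infty$ bound on $u_n-g_{u_n}$ together with the $L^2$ bound on $u_n$ coming from source-condition convergence to get $\|g_{u_n}\|_{L^2(\Omega)}\ls C$, and then route through a Poincaré-Wirtinger inequality for $\nabla g_{u_n}$ as a $\BV(\Omega;\R^2)$ vector field together with a boundary trace argument on the Lipschitz domain $\Omega$ to bound $\|\nabla g_{u_n}\|_{L^1(\Omega)}$ uniformly. Then $\{g_{u_n}\}_n$ is uniformly bounded in $\BV^2(\Omega)$, and the embedding of \cite[Thm.~3.2]{Dem84} and \cite[Rem.~0.3]{Sav96} yields $\|g_{u_n}\|_{L^\infty(\Omega)}\ls C_2$. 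Writing $u_n=(u_n-g_{u_n})+g_{u_n}$ gives the uniform $L^\infty$ bound, and strong convergence in $L^{\overline p}(\Omega)$ for all $\overline p\in(1,\infty)$ follows by interpolation as in \eqref{eq:hoelderbound} of Corollary~\ref{cor:strong_conv}, once compactness as in \cite[Prop.~3.1]{IglMer20} is adapted to $R$ to provide convergence in some $L^{\hat p}$. The main obstacle will be the uniform control of $\|\nabla g_{u_n}\|_{L^1}$: the $\TV^2$ bound only constrains $\nabla g_{u_n}$ modulo its vector mean, and since the inner minimization is not invariant under adding linear functions to $g$ this mean cannot be freely normalized, so the bound has to be extracted indirectly via the trace of $g_{u_n}$ on $\partial\Omega$.
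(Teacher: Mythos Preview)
Your plan coincides with the paper's proof: reuse the dual-stability machinery of Section~\ref{sec:dualstuff} for the one-homogeneous regularizer $R=\TV\,\square\,\gamma\TV^2$, invoke Lemma~\ref{lem:higherorderoptimality} to get $v_n\in\partial\TV(u_n-g_{u_n})$, apply the level-set/isoperimetric argument of Proposition~\ref{prop:bounded_domains} to bound $u_n-g_{u_n}$ in $L^\infty$, bound $\TV^2(g_{u_n})$ by comparing with $u^\dag$ in \eqref{eq:primalproblem-ictv}, and finish via $\BV^2(\Omega)\hookrightarrow L^\infty(\Omega)$. The paper's proof is in fact terser than yours and simply invokes the embedding after the $\TV^2$ bound, without discussing lower-order terms.

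You are right to flag the intermediate-derivative issue: the embedding uses the full $\BV^2$ norm, so one must control $\|\nabla g_{u_n}\|_{L^1}$ and not only $\TV^2(g_{u_n})$. However, your proposed ``boundary trace argument'' is circular as stated: writing $(c_n)_i=|\Omega|^{-1}\int_{\partial\Omega} g_{u_n}\,n_i$ only helps if you already control $\|g_{u_n}\|_{L^1(\partial\Omega)}$, and the available trace estimates for $\BV$ or $\BV^2$ functions require precisely the norms you are trying to bound. A clean replacement is this. You already have $\|g_{u_n}\|_{L^1(\Omega)}$ bounded (from your $L^2$ bound on $u_n$ and the $L^\infty$ bound on $u_n-g_{u_n}$). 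Write $g_{u_n}=a_n+\tilde g_n$ with $a_n$ affine chosen so that $\int_\Omega \tilde g_n=0$ and $\int_\Omega\nabla\tilde g_n=0$; two applications of Poincar\'e--Wirtinger give $\|\tilde g_n\|_{\BV^2}\ls C\,\TV^2(g_{u_n})$, hence $\|\tilde g_n\|_{L^\infty}\ls C$. Then $\|a_n\|_{L^1(\Omega)}\ls \|g_{u_n}\|_{L^1}+\|\tilde g_n\|_{L^1}$ is bounded, and since affine functions are finite-dimensional this bounds the coefficients of $a_n$, hence $\|a_n\|_{L^\infty(\Omega)}$. This closes the gap without any trace estimate.
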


\begin{proof}
Since the proof follows by using the methods of Section \ref{sec:boundedness} essentially verbatim, we provide just an outline highlighting the steps that pose significant differences.

The first ingredient is to repeat the results on convergence and stability of dual variables summarized in Section \ref{sec:dualstuff}, which in fact only depend on the regularization term being positively one-homogeneous and lower semicontinuous. Once these are obtained, from the characterization \eqref{eq:opt-ictv} and Proposition \ref{prop:single_u_bounded} one gets uniform $L^\infty$ bounds for $u_n - g_{u_n}$. To finish, one notices that by comparing with $u^\dag$ in the minimization problem \eqref{eq:primalproblem-ictv} we get using $A u^\dag = f$ that
\[\gamma\TV^2(g_{u_n}) \ls \frac{1}{\sigma} \|w_n\|_{L^q(\Sigma)}^\sigma + \alpha_n \left[\,\inf_{g \in \BV^2(\Omega)} \TV(u^\dag-g) + \gamma \TV^2(g)\right],\]
in which the first term of the right hand side is bounded above in terms of $\alpha_n$ by the parameter choice and $(\TV \square \, \gamma\! \TV^2)(u^\dag) < +\infty$ by the source condition, so the bound on $g_{u_n}$ obtained by the embedding $\BV^2(\Omega) \subset L^\infty(\Omega)$ is not just uniform in $n$ but in fact vanishes as $n \to \infty$. Since we have assumed that $\Omega$ is bounded, the strong convergence in $L^{\bar{p}}(\Omega)$ follows directly along the lines of the proof of Corollary \ref{cor:strong_conv}.
\end{proof}
In comparison with Theorem \ref{thm:joint_thm}, the above result is weaker in two aspects. The first is that  we need to impose that the two regularization parameters maintain a constant ratio between them, in order to formulate a source condition in terms of a subgradient of a fixed functional. Moreover, the result is limited to $d=2$ and bounded domains. Without the latter assumption, in particular the upgrade to strong convergence in $L^{\bar{p}}(\Omega)$ would require a common compact support for all $u_n$. However, even assuming attainment of the inner infimum, using Lemma \ref{lem:higherorderoptimality} we would only get a common support for $u_n-g_{u_n}$, and in the absence of coarea formula for $\TV^2$ it is not clear how to control the one of $g_{u_n}$.

For the case of $\TGV$ regularization it is unclear if one can also use the boundedness results of Section \ref{sec:boundedness}, which are based on subgradients of $\TV$. A first observation is that we cannot use the $\TGV$ subgradients directly, except in trivial cases:

\begin{prop}\label{prop:uninterestingsubgrad}If for $u \in L^{d/(d-1)}(\Omega)$ and $v \in \partial \TGV(u)$ we have that also $v \in \alpha_1 \partial\TV(u)$, then necessarily $\alpha_1 \TV(u)=\TGV(u)$.
\end{prop}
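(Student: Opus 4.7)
Both $\TGV$ (see \eqref{eq:primal-tgv}) and $\alpha_1 \TV$ are convex, nonnegative, positively one-homogeneous functionals on $L^{d/(d-1)}(\Omega)$ vanishing at zero. For $\TGV$ this is a direct consequence of its definition: taking $z = 0$ in the inner infimum shows $\TGV(0) = 0$, while the change of variables $z \mapsto \lambda z$ for $\lambda > 0$ inside the infimum gives $\TGV(\lambda u) = \lambda \TGV(u)$.

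Given these properties, the plan is to apply to both $\TGV$ and $\alpha_1 \TV$ the general characterization of subgradients of convex positively one-homogeneous functionals used to derive \eqref{eq:TVsubgrad1}, namely \cite[Lem.~A.1]{IglMerSch18}: for such a functional $F$, the relation $v \in \partial F(u)$ is equivalent to $v \in \partial F(0)$ together with the duality product equality $\scal{v}{u}_{(L^d, L^{d/(d-1)})} = F(u)$. Applying this characterization simultaneously to the two hypotheses $v \in \partial \TGV(u)$ and $v \in \alpha_1 \partial \TV(u)$ yields
\[\TGV(u) = \scal{v}{u}_{(L^d, L^{d/(d-1)})} = \alpha_1 \TV(u),\]
which is the desired conclusion.

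No significant obstacle is anticipated; the statement is essentially a restatement of the one-homogeneous subgradient identity applied twice. The content of the proposition, consistent with its label, is interpretive: any $u$ satisfying the hypothesis must lie in the degenerate regime where the inner infimum in \eqref{eq:primal-tgv} is attained at $z = 0$, so $\TGV$ contributes nothing beyond $\alpha_1 \TV$ at such $u$, and the boundedness techniques of Section \ref{sec:boundedness} applied to $\partial \TGV$ via this route cannot yield any $L^\infty$ estimate that is not already obtainable from $\partial \TV$ directly.
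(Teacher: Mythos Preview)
Your proof is correct and follows the same route as the paper: both use the one-homogeneous subgradient characterization $v \in \partial F(u) \Leftrightarrow v \in \partial F(0)$ and $\langle v,u\rangle = F(u)$ applied to $\TGV$ and $\alpha_1\TV$ to equate the two via the common value $\langle v,u\rangle$. The paper additionally records the inclusion $\partial\TGV(0)\subset\alpha_1\partial\TV(0)$ along the way (via $\TGV \ls \alpha_1\TV$ and order-reversal of conjugates), but this is contextual and not actually used in the concluding step; your more direct version is equivalent.
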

\begin{proof}
Assuming without loss of generality that $\alpha_1 = 1$, we have by testing with $z=0$ that
\[\inf_{z \in \BD(\Omega)} |Du - z|(\Omega) + \alpha_2 \TD(z) \ls |Du|(\Omega) = \TV(u)  \text{ for all }u\in L^{d/(d-1)}(\Omega),\]
which since the Fenchel conjugate is order reversing means that
\[\TV^\ast (v) \ls \TGV^\ast (v) \text{ for all }v\in L^d(\Omega).\]
Because both these functionals are convex positively one-homogeneous we have as in \eqref{eq:tvast} that
\[\TV^\ast = \chi_{\partial \TV (0)}\text{ and }\TGV^\ast = \chi_{\partial \TGV (0)},\]
so that 
\begin{equation}\label{eq:zeroinclusion}\chi_{\partial \TV (0)} \ls \chi_{\partial \TGV (0)}, \text{ or } \partial\TGV(0) \subset \partial\TV(0).\end{equation}
Now, again since these functionals are positively one-homogeneous we have
\begin{equation}\label{eq:TGVsubgrad1}v \in \partial \TGV (u)\text{ exactly when }v \in \partial \TGV(0) \text{ and }\int_\Omega vu = \TGV(u)\end{equation}
and similarly for $\TV$, as stated in \eqref{eq:TVsubgrad1}. But since we assumed that $v \in \partial\TV(u)$ as well, we must then have
\[\TGV(u) = \int_\Omega vu = \TV(u).\qedhere\]
\end{proof}
In Appendix \ref{sec:counterexample} we explore whether it is possible to use a more refined approach by trying to find elements of $\partial \TV(u)$ from those of $\partial J_{z_u}(u)$ appearing in \eqref{eq:opt-tgv} for $\TGV$ regularization, with a negative conclusion (at least without using further properties of $z_u$) in the form of an explicit counterexample.

\section*{Acknowledgments} We wish to thank Martin Holler for some enlightening discussions about higher-order regularization terms in the early stages of this work. Moreover, we also would like to thank the anonymous reviewers for their careful reading of this manuscript, and for the explicit construction in Remark \ref{eq:radialzexplicit} provided by one of them.

\section*{Declarations} A large portion of this work was completed while the second-named author was employed at the Johann Radon Institute for Computational and Applied Mathematics (RICAM) of the Austrian Academy of Sciences (\"OAW), during which his work was partially supported by the State of Upper Austria. The Institute of Mathematics and Scientific Computing of the University of Graz, with which the first-named author is affiliated, is a member of NAWI Graz (\texttt{https://www.nawigraz.at/en/}). 

The authors have no conflicts of interest to declare that are relevant to the content of this article.

\bibliographystyle{plain}
\bibliography{biblio}

\appendix
\section{A subgradient of TGV does not easily induce one for TV}\label{sec:counterexample}

To attempt to infer that $\TGV$-regularized minimizers are bounded, an alternative approach to the arguments of Section \ref{sec:higherorder} would be to try and use that the inner minimization problem is attained. Proposition \ref{prop:ictv} above for the infimal convolution case uses only that $\partial\TV(u-g_u) \neq \emptyset$ and $g_u \in \BV^2(\Omega)$, and not any information about subdifferentials of $\TV^2$. It is natural to wonder whether it is possible to follow a similar approach for $\TGV$ starting from \eqref{eq:opt-tgv}. Unlike in the infimal convolution case, clearly this cannot follow just from embeddings. 

What \eqref{eq:opt-tgv} provides is an element in the subdifferential of the functional $J_z=|D\cdot\,-z|(\Omega)$ at $u$, and $Du$ is only a measure but $z \in \BD(\Omega) \subset L^{d/(d-1)}(\Omega; \R^d)$ is much more regular. We could then ask ourselves if this means that $\partial \TV(u) \neq \emptyset$ or, equivalently, if $u$ being irregular enough to not have a subgradient of $\TV$ would imply $\partial J_z(u)=\emptyset$ as well, making $z$ just a `more regular perturbation'. Unfortunately this is not the case, which prevents using \eqref{eq:opt-tgv} and knowledge of subgradients of $\TV$ to infer whether minimizers of $\TGV$ regularization are bounded.

For our counterexample we will use the following characterization:

\begin{lemma}\label{lem:badseqs}For arbitrary $u \in \BV(\Omega)$ and $z \in \mathcal{M}(\Omega; \R^d)$ (which could be $z=0$ for the $\TV$ case) we have that $\partial J_{z}(u) = \emptyset$ if and only if there are $v_n$ with $\|v_n\|_{L^{d/(d-1)}(\Omega)}=1$ and $t_n \to 0^+$ such that
\begin{equation}\label{eq:explosion}\lim_{n \to \infty} \frac{1}{t_n}\Big(|Du + t_n Dv_n - z|(\Omega) - |Du - z|(\Omega) \Big) = -\infty.\end{equation}
\end{lemma}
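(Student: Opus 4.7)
The plan is by contraposition in both directions, exploiting the subgradient inequality together with the Moreau--Rockafellar sum rule for convex subdifferentials.

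For ($\Leftarrow$), assume the sequences $(v_n, t_n)$ exist as in \eqref{eq:explosion}, and suppose toward contradiction that some $\ell \in \partial J_z(u) \subset L^d(\Omega)$ existed. The subgradient inequality (Definition \ref{def:subgrad}) applied with increment $t_n v_n$ together with H\"older would yield
\[\frac{|Du + t_n Dv_n - z|(\Omega) - |Du - z|(\Omega)}{t_n} \gs \scal{\ell}{v_n}_{(L^d, L^{d/(d-1)})} \gs -\|\ell\|_{L^d(\Omega)},\]
where in the last step we used $\|v_n\|_{L^{d/(d-1)}(\Omega)} = 1$. This contradicts \eqref{eq:explosion}.

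For the nontrivial direction ($\Rightarrow$) I would also argue by contraposition. Assuming no such sequence exists, there is a finite $C > 0$ with
\[\frac{J_z(u + tv) - J_z(u)}{t} \gs -C\quad\text{for every }\|v\|_{L^{d/(d-1)}(\Omega)} = 1, \ t > 0.\]
By positive homogeneity (and vacuously wherever $J_z(u + h) = +\infty$), this rescales to the one-sided estimate
\[J_z(u + h) \gs J_z(u) - C\|h\|_{L^{d/(d-1)}(\Omega)}\quad\text{for all }h \in L^{d/(d-1)}(\Omega).\]
I would then introduce $\tilde J(h) := J_z(u+h) - J_z(u) + C\|h\|_{L^{d/(d-1)}(\Omega)}$, which is proper, convex, and lower semicontinuous, satisfies $\tilde J \gs 0$ and $\tilde J(0) = 0$, and hence is globally minimized at $h=0$, so that $0 \in \partial \tilde J(0)$.

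The key step is to invoke the Moreau--Rockafellar sum rule: since $C\|\cdot\|_{L^{d/(d-1)}(\Omega)}$ is finite and continuous everywhere (in particular at $0$, where $J_z(u+\cdot)$ is finite), the sum rule and a standard computation of the subdifferential of a norm at the origin give
\[\partial \tilde J(0) = \partial J_z(u) + C\,\partial \|\cdot\|_{L^{d/(d-1)}(\Omega)}(0) = \partial J_z(u) + C\,\overline{B}_{L^d(\Omega)}(0,1).\]
Combined with $0 \in \partial \tilde J(0)$, this produces some $\ell \in \partial J_z(u)$ with $\|\ell\|_{L^d(\Omega)} \ls C$, contradicting $\partial J_z(u) = \emptyset$. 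The main technical point, and what I expect to be the only nontrivial obstacle, is verifying the qualification condition for the sum rule (automatic here thanks to continuity of the norm) and identifying $\partial\|\cdot\|(0)$ as the closed unit ball of $L^d(\Omega)$; both are standard convex analysis facts but are what make the contrapositive argument go through.
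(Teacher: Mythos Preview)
Your argument is correct, and it takes a genuinely different route from the paper's. The paper argues the nontrivial direction directly through the directional-derivative function $h \mapsto D_h J_z(u)$: it shows that if $\partial J_z(u)=\emptyset$ then $(D_{(\cdot)} J_z(u))^\ast \equiv +\infty$, hence the biconjugate is $-\infty$, so the lower semicontinuous envelope of the sublinear function $D_{(\cdot)} J_z(u)$ must take the value $-\infty$ at some $h$. A sequence $h_n \to h$ with $D_{h_n} J_z(u) \to -\infty$ is then approximated by difference quotients and normalized to produce the $(v_n,t_n)$. Your approach instead packages everything into the Moreau--Rockafellar sum rule: from the uniform lower bound you deduce $0 \in \partial\big(J_z(u+\cdot) + C\|\cdot\|\big)(0)$ and split. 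This is shorter and avoids the biconjugate machinery; on the other hand, the paper's argument is slightly more informative in that it locates where the bad directions live (near a point of failure of lower semicontinuity of $D_{(\cdot)} J_z(u)$). One small point you glide over: the contrapositive hypothesis only forbids sequences with $t_n\to 0^+$, so to get your uniform bound for \emph{all} $t>0$ you are implicitly using that for convex $J_z$ the difference quotient $t\mapsto \big(J_z(u+tv)-J_z(u)\big)/t$ is nondecreasing, which lets any violating $(v,t)$ be replaced by one with arbitrarily small $t$. This is routine but worth making explicit.
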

\begin{proof}
This is a general fact for proper convex functionals whose domain is a linear subspace. Let $F$ be such a functional on a separable Banach space $X$ and let $x \in \dom F$. For any $h \in X$, the directional derivative \[D_h F(x) := \lim_{t \searrow 0}\frac{F(x+th) - F(x)}{t}\] in the direction $h$ exists (it can be $\pm\infty$) since the difference quotient
 \[ t \mapsto \frac{F(x+th) - F(x)}{t} \]
is either constant $+\infty$ or nondecreasing with $t>0$. Moreover, we notice that $D_{(\cdot)} F(x)$ is sublinear and positively homogeneous, in particular convex. And since the domain of $F$ is a linear subspace, it is its own relative interior, so $D_{(\cdot)} F(x)$ is also proper \cite[Thm.~2.1.13]{Zal02}.
 
Let us assume that there is $v \in X^\ast$ such that $v \in \partial F(x).$ Then, we have $\frac{F(x+th) - F(x)}{t} \gs \scal{v}{h}$, which implies $D_h F(x) \gs \scal{v}{h}.$ If in particular, $\Vert h \Vert = 1$, we have, for all $t$, 
 \[\frac{F(x+th) - F(x)}{t} \gs - \Vert v \Vert.\] In consequence, if $\partial F(x) \neq \emptyset$ the limit in \eqref{eq:explosion} cannot be $-\infty$.
 
For the opposite direction, let us now assume that $\partial F(x) = \emptyset$ for $x \in \dom F$. We want to show that in this case
\[\big(D_{(\cdot)} F(x)\big)^\ast = +\infty.\]
For that, let us assume the contrary, that is, at some $v \in X^\ast$,
\[ \big(D_{(\cdot)} F(x)\big)^\ast (v) = \sup_{h \in X} \scal{v}{h} - D_hF(x) =M <  + \infty.\]
Noticing that since $D_{(\cdot)} F(x)$ is positively one-homogeneous, $\big(D_{(\cdot)} F(x)\big)^\ast$ is a characteristic function, so we must have that $M=0$. This means that for all $h$, we have 
\[D_h F(x) \gs \scal{v}{h},\]
which implies that for all $t>0$,
\[\frac{F(x+th) - F(x)}{t} \gs \scal{v}{h}\]
or again, using $t=1$
\[ F(x+h) - F(x) \gs \scal{v}{h}  \]
and $v \in \partial F(x)$, which is a contradiction.
 
Conjugating again, we get $\big(D_{(\cdot)} F(x)\big)^{\ast \ast} = -\infty$. This implies that the lower semicontinuous envelope of the convex function $D_{(\cdot)} F(x)$ is not proper, since otherwise \cite[Thm.~2.3.4(i)]{Zal02} it would equal $\big(D_{(\cdot)} F(x)\big)^{\ast \ast}$, giving a contradiction. And since $D_{(\cdot)} F(x)$ is not everywhere $+\infty$ (just consider $D_0 F(x)=0$), there must be some $h \in X$ at which this lower semicontinuous envelope takes the value $-\infty$. This means that there is a sequence of directions $h_n \to h$ for which $D_{h_n} F(x) \to -\infty$, and by removing some elements if necessary, we may also assume that $D_{h_n} F(x) < +\infty$ and $h_n \neq 0$ for all $n$ (again since $D_0 F(x)=0$).

 By definition of the directional derivative, there exists $t_n$ (we can choose $t_{n+1} < t_n/2$ to enforce $t_n \to 0$) such that 
 \[ \left \vert \frac{F(x+t_nh_n) - F(x)}{t_n} - D_{h_n} F(x) \right \vert \ls 1,\]
 where the left hand is well defined since $D_{h_n} F(x) < +\infty$.
 
 We obtain then
 \[\frac{F(x+t_nh_n) - F(x)}{t_n} \to -\infty.\]
 Finally, setting $\tilde h_n = \frac{h_n}{\Vert h_n \Vert}$ and $\tilde t_n = \Vert h_n \Vert t_n$, we have $\tilde t_n \to 0$ and
  \[\frac{F(x+\tilde t_n \tilde h_n) - F(x)}{\tilde t_n} \to -\infty,\]
which was to be shown.
 \end{proof}
 
\begin{example}\label{ex:nosubdiff}
We start with some nonzero function $S \in W^{1,1}(0,1)$ with $S(0)=S(1)=0$. For illustration purposes one can think for example of the hat function
\[S(x)=1-|2x-1|.\]
We extend it by zero outside of $(0,1)$ and define the functions, for $n\gs 0$,
\[ S_n(x) := S\left[2^n x-\left(2^{n+1}-2\right) \right] \]
which have support in \[\left[2-2^{-n+1}, 2-2^{-n}\right] \subset [0,2].\]
Note that these supports have an intersection of measure zero. 
We then introduce $\Omega = (0,2)\times(0,1)$ and define $u_n \in W^{1,1}(\Omega)$ by
\[u_n(x_1,x_2):=S_n(x_1).\]
Now, we have that
\begin{align}\|\nabla u_n\|_{L^1} &= \int_{\Omega} |\nabla u_n(x_1,x_2)| \dd x = \int_{(0,2)} |S_n'(t)| \dd t \\&=  \int_{(0,1)} |S'(t)| \dd t = \|S'\|_{L^1(0,1)},\end{align}
while
\[\|u_n\|_{L^2}^2=\int_{\Omega} |u_n(x_1,x_2)|^2 \dd x = \frac{1}{2^n} \int_{(0,1)} |S(t)|^2 \dd t = \frac{1}{2^n} \|S\|_{L^2(0,1)}^2.\]
We define then
\[u:=\sum_{n=0}^\infty 2^{-2n} u_n \in W^{1,1}(\Omega).\]
For it we can use Lemma \ref{lem:badseqs} with $z=0$,
\[v_n:=-\frac{u_n}{\|u_n\|_{L^2}},\quad t_n:=2^{-2n}\|u_n\|_{L^2}\to 0, \quad|D v_n|(\Omega) = 2^{n/2} \frac{\|S'\|_{L^1(0,1)}}{\|S\|_{L^2(0,1)}} \to \infty.\]
Moreover by construction $|Du+t_n Dv_n|=|Du|-t_n |Dv_n|$, so we conclude that $|D \cdot|\big((0,2)\times (0,1)\big)$ is not subdifferentiable at $u$. In particular, for the particular choice $z=Du$ we have $0 \in \partial J_z(u)$, but $\partial \TV(u) = \emptyset$.
\begin{figure}
\centering
 \includegraphics[width = 0.45\textwidth]{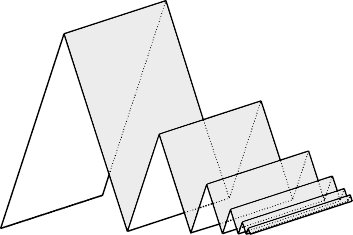}
 \caption{Sketch of the graph of the function $u$ in Example \ref{ex:nosubdiff}.}
 \label{fig:counterexample}
\end{figure}
Notice that $Du \in L^1(\Omega; \R^2)$, indicating that lack of subdifferentiability of $\TV$ arises not only from the singular part of the derivative of $\BV$ functions. We also observe that, extending the $v_n$ to $\R^2$, we have
\[\frac{|D v_n|}{|Dv_n|(\Omega)} \wksto \mathcal{H}^1 \mres \big(\{2\}\times (0,1)\big),\]
and in particular the Lebesgue measure of the support of $D v_n$ vanishes in the limit.
\end{example}

\begin{remark}From the example above we also get a counterexample of the statement we wanted to prove in the first place. The idea is just to take $u$ as above, and for $z$ consisting of the tail of the series starting from some $N>0$ to remove the small amplitude oscillations in $Du-z$, that is
\[z:=Dw, \text{ for }w=\sum_{n=N}^\infty 2^{-2n} u_n.\]
Then $\partial \TV(u)=\emptyset$ as above, but $\partial J_z(u) \neq \emptyset$ for $J_z=|D\cdot-z|(\Omega)$. Say that $N=1$ and in the construction of the example we choose
\[S=\rho_{1/8}\ast 1_{[1/4,3/4]},\]
where $\rho_{1/8}$ is a standard mollifier supported on $(-1/8,1/8)$. Then the function defined by 
\[g(t)=\begin{cases} 8t &\text{ if }t \in (0, 1/8) \\ +1 &\text{ if }t \in (1/8, 3/8) \\ -8t+4 &\text{ if }t \in (3/8, 5/8) \\ -1 &\text{ if }t \in (5/8, 7/8) \\ 8t-8 &\text{ if }t \in (7/8, 1) \end{cases}\]
satisfies $|g(t)| \ls 1$, $g(t)=\sign(S'(t))$ whenever $S'(t)\neq 0$, and $g' \in L^\infty(0,1)$. This implies that the negative divergence of the vector field $\tilde g$ defined by $\tilde g(x_1,x_2) = (g(x_1), 0)^T$ is in $\partial \TV(u_0)=\partial \TV(u-w) = \partial J_z(u)$, since $-\div \tilde g(x_1, x_2) = -g'(x_1)$ and
\[\int_\Omega -\div \tilde g(x) \,u(x) \dd x = \int_\Omega \tilde g(x) \cdot \nabla u(x) \dd x = \int_0^1 g(t)\, S'(t) \dd t = \int_0^1 |S'(t)| \dd t = \TV(u).\]
\end{remark}

\begin{remark}The $Du$ and $z$ of the example above are not in $\BD$. However, if one starts with a smoother $S \in W^{2,1}(0,1)$, the construction works just the same. We would get
\begin{gather*}\|\nabla^2 u_n\|_{L^1} = 2^{n} \|S''\|_{L^1(0,1)}\text{ but }\sum_{n=1}^\infty \, 2^{-2n} \cdot 2^{n} <+\infty, \\
\text{ so }u \in W^{2,1}(\Omega) \text{ and }z \in \big[W^{1,1}(\Omega)\big]^d\subset \BD(\Omega).\end{gather*}
\end{remark}

To summarize, the subdifferential of the functional $J_z=|D\cdot\,-z|(\Omega)$ being nonempty at $u$ does not automatically imply that $\partial \TV(u) \neq \emptyset$, which prevents us from easily applying the techniques of Section \ref{sec:boundedness}, in particular Remark \ref{rem:nonemptysubg}, to the $\TGV$ case. In any case, we note that our counterexamples are built with vector fields $z$ which are not optimal for the inner minimization of \eqref{eq:primal-tgv} defining $\TGV(u)$, so it could still be that $\partial \TGV(u) \neq \emptyset$ implies (beyond the trivial case in Proposition \ref{prop:uninterestingsubgrad}) that $\partial \TV(u) \neq \emptyset$.

\end{document}